\newcommand{\bt}{\tilde{b}}
\newcommand{\geh}{\mathfrak{g}}
\newcommand{\is}{\mathrm{inner}}
\newcommand{\la}{\lambda}
\newcommand{\La}{\Lambda}
\newcommand{\ol}{\overline}
\newcommand{\os}{\mathrm{outer}}
\newcommand{\ot}{\otimes}
\newcommand{\ve}{\varepsilon}
\newcommand{\vp}{\varphi}
\newcommand{\wt}{\mathrm{wt}\,}
\newcommand{\xb}{\ol{x}}
\newcommand{\Z}{\mathbb{Z}}
\numberwithin{equation}{section}
\newtheorem{theorem}{Theorem}
\newtheorem{prop}[theorem]{Proposition}
\newtheorem{lemma}[theorem]{Lemma}
\newtheorem{cor}[theorem]{Corollary}
\theoremstyle{definition}
\newtheorem{remark}{Remark}
\newtheorem{example}[remark]{Example}
\numberwithin{theorem}{section}
\numberwithin{definition}{section}
\numberwithin{remark}{section}
\begin{document}

\title[Combinatorial $R$-matrices]
{Combinatorial $R$-matrices for Kirillov--Reshetikhin crystals of type $D^{(1)}_n,B^{(1)}_n,A^{(2)}_{2n-1}$}

\author[M.~Okado]{Masato Okado}
\address{Department of Mathematical Science,
Graduate School of Engineering Science, Osaka University,
Toyonaka, Osaka 560-8531, Japan}
\email{okado@sigmath.es.osaka-u.ac.jp}

\author[R.~Sakamoto]{Reiho Sakamoto}
\address{Department of Physics, Graduate School of Science, The University of Tokyo,
Hongo, Bunkyo-ku, Tokyo, 113-0033, Japan}
\email{reiho@spin.phys.s.u-tokyo.ac.jp}


\begin{abstract}
We calculate the image of the combinatorial $R$-matrix for any classical highest weight element
in the tensor product of Kirillov--Reshetikhin crystals $B^{r,k}\ot B^{1,l}$ of type 
$D^{(1)}_n,B^{(1)}_n,A^{(2)}_{2n-1}$. The notion of $\pm$-diagrams is effectively used for the
identification of classical highest weight elements in $B^{1,l}\ot B^{r,k}$.
\end{abstract}

\maketitle

\section{Introduction}
Let $U'_q(\geh)$ be the quantum enveloping algebra associated to an affine algebra $\geh$ without 
derivation. Let $V,V'$ be finite-dimensional $U'_q(\geh)$-modules. Suppose $V\ot V'$ is irreducible
and $V,V'$ have crystal bases $B,B'$. Then it is known \cite{Ka3,O} that there exists a unique map
$R$ from $B\ot B'$ to $B'\ot B$ commuting with any crystal operators $e_i$ and $f_i$. There also exists
an integer-valued function $H$ on $B\ot B'$, called energy function, satisfying a certain recurrence
relation under the action of $e_i$ (see \eqref{eq:e-func}).

Combinatorial $R$-matrices or energy functions play an important role in the affine crystal theory.
In the Kyoto path model \cite{KMN1:1992}, that realizes the affine highest weight crystal in terms of a 
semi-infinite tensor product of perfect crystals, the energy function is an essential ingredient for 
the computation of the affine weight. In the box-ball system \cite{FOY,HHIKTT} or its generalizations 
\cite{HKOTY2} in the formulation of crystal bases, the time evolution of the system is defined by using 
the combinatorial $R$-matrix. Energy functions are also crucial in the calculation of conserved quantities.
In \cite{Sa} a new connection was revealed between the energy function and the KKR or KSS bijection 
\cite{KKR,KR1,KSS} that gives a one-to-one correspondence between highest weight paths and rigged 
configurations.

Recently, for all nonexceptional affine types, all KR crystals, crystal bases of Kirillov--Reshetikhin (KR)
modules (if they exist), were shown to exist and their combinatorial structures were clarified 
\cite{O:2007,S:2008,OS:2008,FOS}. Hence, it is natural to consider the problem of obtaining a rule to 
calculate the combinatorial $R$-matrix and energy function.

In this paper, for type $D^{(1)}_n,B^{(1)}_n,A^{(2)}_{2n-1}$ 
we calculate the image of the combinatorial $R$-matrix for any classical highest weight 
element in the tensor product of KR crystals $B^{r,k}\ot B^{1,l}$ (Theorem \ref{th:main}). (Note that the 
first upper index of the second component is 1.) We also obtain the value of the energy function for such 
elements. Although we get the rule only for highest weight elements, there is an advantage from the 
computational point of view, since it is always easy to calculate the action of crystal operators $e_i,f_i$ 
for $i\ne0$ not only by hand but also by computer. To identify highest weight elements in the image 
$B^{1,l}\ot B^{r,k}$ the notion of $\pm$-diagrams, introduced in \cite{S:2008}, is used effectively.

The paper is organized as follows. In Section 2 we briefly review crystals and $\pm$-diagrams. In Section 3
we recall the KR crystal $B^{r,k}$ for type $D^{(1)}_n,B^{(1)}_n$ and $A^{(2)}_{2n-1}$, and the notion of
combinatorial $R$-matrix and energy function. The condition for an element of $B^{r,k}\ot B^{1,l}$ or 
$B^{1,l}\ot B^{r,k}$ to be classically highest is also presented. The main theorem is given in Section 4.
In Section 5 we prove a special case of the theorem, and reduction to this case is discussed in Section 6
according to whether $r$ is odd or even.

\subsection*{Acknowledgements.}
MO was supported by grant JSPS 20540016. The work of RS is supported by
the Core Research for Evolutional Science and Technology
of Japan Science and Technology Agency.

\section{Reviews on crystals and $\pm$-diagrams}

\subsection{Crystals} \label{subsec:crystals}

Let $\geh$ stand for a simple Lie algebra or affine Kac--Moody Lie algebra with index set $I$ and 
$U_q(\geh)$ the corresponding quantized enveloping algebra. Axiomatically, a $\geh$-crystal is a 
nonempty set $B$ together with maps
\begin{equation*}
\begin{split}
	e_i, f_i &: B \to B \cup \{0\} \qquad \text{for $i\in I$,}\\
	\wt &: B \to P,
\end{split}
\end{equation*}
where $P$ is the weight lattice associated to $\geh$. The maps $e_i$ and $f_i$ are called Kashiwara
operators and $\wt$ is the weight function.
To each crystal one can associate a crystal graph with vertices in $B$ and an arrow colored $i\in I$
from $b$ to $b'$ if $f_ib=b'$ or equivalently $e_ib'=b$. For $b\in B$ and $i\in I$, let
\begin{equation*}
\begin{split}
	\ve_i(b) &= \max\{k \in \Z_{\ge 0} \mid e_i^kb \neq 0 \},\\
	\vp_i(b) &= \max\{k \in \Z_{\ge 0} \mid f_i^kb \neq 0 \}.
\end{split}
\end{equation*}
In this paper we only consider crystal bases coming from $U_q(\geh)$-modules. For a complete definition 
of crystal bases see for example~\cite{Ka:1991,HK:2002}.

Let $B_1,B_2$ be crystals. Then $B_1\ot B_2=\{b_1\ot b_2\mid b_1\in B_1,b_2\in B_2\}$ can be endowed with
the structure of crystal. The actions of Kashiwara operators and the value of the weight function are 
given by
\begin{align*}
e_i(b_1\ot b_2)&=\left\{
\begin{array}{ll}
e_ib_1\ot b_2\quad&\text{if }\vp_i(b_1)\ge\ve_i(b_2),\\
b_1\ot e_ib_2\quad&\text{if }\vp_i(b_1)<\ve_i(b_2),
\end{array}\right.\\
f_i(b_1\ot b_2)&=\left\{
\begin{array}{ll}
f_ib_1\ot b_2\quad&\text{if }\vp_i(b_1)>\ve_i(b_2),\\
b_1\ot f_ib_2\quad&\text{if }\vp_i(b_1)\le\ve_i(b_2),
\end{array}\right.\\
\wt(b_1\ot b_2)&=\wt b_1+\wt b_2.
\end{align*}
The multiple tensor product is defined inductively. In order to compute the action of $e_i,f_i$ 
on multiple tensor products, it is convenient to use the rule called ``signature rule"~\cite{KN:1994,O}.
Let $b_1\ot b_2\ot\cdots\ot b_m$ be an element of the tensor product of crystals 
$B_1\ot B_2\ot\cdots\ot B_m$. One wishes to find the indices $j,j'$ such that 
\begin{align*}
e_i(b_1\ot\cdots\ot b_m)&=b_1\ot\cdots\ot e_ib_j\ot\cdots\ot b_m,\\
f_i(b_1\ot\cdots\ot b_m)&=b_1\ot\cdots\ot f_ib_{j'}\ot\cdots\ot b_m.
\end{align*}
To do it, we introduce ($i$-)signature by
\[
\overbrace{-\cdots-}^{\ve_i(b_1)}\overbrace{+\cdots+}^{\vp_i(b_1)}
\overbrace{-\cdots-}^{\ve_i(b_2)}\overbrace{+\cdots+}^{\vp_i(b_2)}
\:\cdots\cdots\:
\overbrace{-\cdots-}^{\ve_i(b_m)}\overbrace{+\cdots+}^{\vp_i(b_m)}.
\]
We then reduce the signature by deleting the adjacent $+-$ pair successively. 
Eventually we obtain a reduced signature of the following form.
\[
--\cdots-++\cdots+
\]
Then the action of $e_i$ (resp. $f_i$) corresponds to changing the rightmost $-$ to $+$ 
(resp. leftmost $+$ to $-$). If there is no $-$ (resp. $+$) in the signature, then the action of
$e_i$ (resp. $f_i$) should be set to $\emptyset$. The value of $\ve_i(b)$ (resp. $\vp_i(b)$)
is given by the number of $-$ (resp. $+$) in the reduced signature.

Consider, for instance, an element $b_1\ot b_2\ot b_3$ of the 3 fold tensor product $B_1\ot B_2\ot B_3$. 
Suppose $\ve_i(b_1)=1,\vp_i(b_1)=3,\ve_i(b_2)=1,\vp_i(b_2)=1,
\ve_i(b_3)=2,\vp_i(b_3)=1$. Then the signature and reduced one read 
\[
\begin{array}{cclcccr}
\mbox{sig}&&-++&\cdot&-+&\cdot&--+\phantom{.}\\
\mbox{red sig}&&-&\cdot&&\cdot&+.
\end{array}
\]
Thus we have
\begin{align*}
e_i(b_1\ot b_2\ot b_3)&=e_ib_1\ot b_2 \ot b_3,\\
f_i(b_1\ot b_2\ot b_3)&=b_1\ot b_2 \ot f_ib_3.
\end{align*}

We denote by $B(\La)$ the highest weight crystal of highest weight $\La$, where $\La$
is a dominant integral weight. Let $\La_i$ with $i\in I$ be the fundamental weights associated to a simple
Lie algebra. In this paper, we consider the types of $B_n,C_n$ and $D_n$. 
As usual, a dominant integral weight $\La=\La_{i_1}+\cdots+\La_{i_k}$ is identified with a partition or
Young diagram
with columns of height $i_j$ for $1\le j\le k$, except when $\La_{i_j}$ is a spin weight, namely, 
$\La_n$ for type $B_n$ and $\La_{n-1}$ and $\La_n$ for type $D_n$. 
To represent elements of $B(\La)$ we use Kashiwara--Nakashima (KN) tableaux,
a generalization of semistandard Young tableaux for type $A_n$. For KN tableaux refer to \cite{KN:1994}.
(See also \cite{FOS} for a summary.) Contrary to the original one, we use the French notation where 
parts are drawn in increasing order from top to bottom.

To calculate the actions of $e_i,f_i$ on a KN tableau it is convenient to use so-called the Japanese
reading word of a tableau. For a KN tableau $T$ move from right to left and on each column move from 
bottom to top. During this process we read letters, thereby obtaining a word $w(T)$. A letter can be 
identified with an element of $B(\La_1)$, crystal of the vector representation. Hence $w(T)$ can be
viewed as an element of $B(\La_1)^{\ot N}$ with $N$ being the number of nodes in $T$ or length of $w(T)$.
Then the action of $e_i$ or $f_i$ is calculated by using the signature rule. We still need to remember
the crystal graph of $B(\La_1)$ for type $B_n,C_n,D_n$, but it is easy as described in \cite{KN:1994}.

\subsection{$\pm$-diagrams} \label{subsec:pm diag}

Let $X_n$ be $B_n,C_n$ or $D_n$. For a subset $J \subset I$, we say that $b\in B$ is $J$-highest 
if $e_ib=0$ for all $i\in J$. We set $J=\{2,3,\ldots,n\}$.
We describe $J$-highest elements in terms of a notion of $\pm$-diagram~\cite{S:2008}. 
A $\pm$-diagram $P$ of shape $\La/\la$ is a sequence of partitions $\la\subset \mu \subset \La$ 
such that $\La/\mu$ and $\mu/\la$ are horizontal strips. We
depict this $\pm$-diagram by the skew tableau of shape $\La/\la$ in
which the cells of $\mu/\la$ are filled with the symbol $+$ and
those of $\La/\mu$ are filled with the symbol $-$. Write
$\La=\os(P)$ and $\la=\is(P)$ for the outer and inner shapes of the $\pm$-diagram $P$.
For type $C_n$ we have a further requirement: the outer shape $\La$ contains columns of height 
at most $n$, but the inner shape $\lambda$ is not allowed to be of height $n$ (hence there are 
no empty columns of height $n$). As we have discussed we identify a Young diagram with a weight.

\begin{prop} \cite{S:2008} \label{P:branch}
Let $\La$ be an $X_n$ weight that does not contain spin weights. Then there is an isomorphism of 
$X_{n-1}$-crystals
\begin{align*}
  B_{X_n}(\La) \cong \bigoplus_{\substack{\text{$\pm$-diagrams $P$} \\ \os(P)=\La}}
  B_{X_{n-1}}(\is(P)).
\end{align*}
That is, the multiplicity of $B_{X_{n-1}}(\la)$ in $B_{X_n}(\La)$,
is the number of $\pm$-diagrams of shape $\La/\la$.
\end{prop}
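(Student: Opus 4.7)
The plan is to match both sides of the isomorphism via a count of $J$-highest elements, where $J=\{2,3,\ldots,n\}$. Since $B_{X_n}(\La)$ viewed as an $X_{n-1}$-crystal decomposes as a direct sum of normal connected components, each of the form $B_{X_{n-1}}(\nu)$ for some $X_{n-1}$-dominant weight $\nu$, and each such component contains a unique $J$-highest element, of weight $\nu$, Proposition~\ref{P:branch} is equivalent to the statement that the number of $J$-highest elements of $B_{X_n}(\La)$ of $X_{n-1}$-weight $\la$ equals the number of $\pm$-diagrams with outer shape $\La$ and inner shape $\la$.

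To establish this count, I would invoke the classical branching rule for $X_n \supset X_{n-1}$. For $X=B$ or $D$ (with $\La$ not involving spin weights), Littlewood/King's branching formulas give that the multiplicity $[V_{X_n}(\La):V_{X_{n-1}}(\la)]$ equals the number of partitions $\mu$ with $\la \subset \mu \subset \La$ such that both $\La/\mu$ and $\mu/\la$ are horizontal strips; this count literally is the number of $\pm$-diagrams of shape $\La/\la$, since the data of such a $\mu$ is equivalent to marking the cells of $\mu/\la$ with $+$ and those of $\La/\mu$ with $-$. A more direct alternative, probably closer to what is carried out in \cite{S:2008}, is to construct the bijection explicitly at the level of KN tableaux: given $P$, build $T(P)$ column by column, filling the inner cells of each column of height $h$ from the bottom up with the $X_{n-1}$-highest content $2,3,\ldots$, placing a $1$ in every $+$-cell and a $\bar 1$ in every $-$-cell stacked above. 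One then applies the signature rule to the Japanese reading word to verify that $T(P)$ is indeed $J$-highest of $X_{n-1}$-weight $\is(P)$, and the map $P \mapsto T(P)$ is seen to be a bijection onto the set of $J$-highest elements. Upgrading the resulting numerical equality to an isomorphism of $X_{n-1}$-crystals is automatic because a normal connected crystal is determined up to isomorphism by its highest weight.

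The main obstacle is the type $C_n$ case, where $\La$ is allowed to contain columns of height $n$ but the inner shape $\la$ is not; this asymmetry reflects the fact that $V_{C_n}(\La_n)$ restricts nontrivially to $C_{n-1}$ and must always contribute a $-$ or $+$ cell rather than appearing as fully inner. One must therefore check separately, in the $C_n$ branching rule, that any column of height $n$ in $\La$ carries a $\pm$ sign in every $\pm$-diagram, and that no additional $J$-highest tableau of $B_{C_n}(\La)$ arises beyond those produced by the construction $P \mapsto T(P)$. This bookkeeping around height-$n$ columns is what makes the $C_n$ statement slightly more delicate than $B_n$ and $D_n$.
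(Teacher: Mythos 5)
Your argument is correct: the paper itself gives no proof of this proposition (it is quoted from \cite{S:2008}), and your route --- reduce to counting $J$-highest elements, identify that count with the classical $X_n\downarrow X_{n-1}$ branching multiplicities (two interlacing horizontal strips, i.e.\ exactly the $\pm$-diagrams, with the Zhelobenko double-interlacing rule handling $C_n$ and forcing the height-$n$ columns to carry a sign), and then use that a normal connected crystal is determined by its highest weight --- is precisely the standard argument used in the cited reference, where the explicit bijection is the map $\Phi$ described immediately after the proposition. No gaps.
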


There is a bijection $\Phi:P\mapsto b$ from $\pm$-diagrams $P$ of shape 
$\La/\la$ to the set of $J$-highest elements $b$ of $X_{n-1}$-weight $\la$.
For any columns of height $n$ containing $+$, place a column $12\cdots n$. Otherwise,
place $\ol{1}$ in all positions in $P$ that contain a $-$, and fill the remainder of all 
columns by strings of the form $23\cdots k$. We move through the columns of $b$ from top to bottom,
left to right. Each $+$ in $P$ (starting with the leftmost moving to the right ignoring $+$ at height $n$)
will alter $b$ as we move through the columns. Suppose the $+$ is at height $h$ in $P$.
If one encounters a $\ol{1}$, replace $\ol{1}$ by $\ol{h+1}$. If one encounters a $2$,
replace the string $23\cdots k$ by $12\cdots h h+2\cdots k$.

\begin{example} \label{ex:pm-diag}
Let us consider the following $\pm$-diagram.
\[
\Yvcentermath1
\Yboxdim12pt
\newcommand{\kuu}{{}}
P=\young(+-,\kuu\kuu\kuu+-,\kuu\kuu\kuu\kuu++-,\kuu\kuu\kuu\kuu\kuu\kuu\kuu\kuu+)
\]
To obtain $\Phi(P)$ we first draw the tableau
\[
\Yboxdim12pt
\newcommand{\onebar}{\ol{1}}
\young(5\onebar,4444\onebar,333333\onebar,222222222)\;.
\]
Reading from left there are $+$'s at height 4,3,2,2,1. Each $+$ alter the above tableau as follows.
The 1st $+$ changes the first column as $1234$ (reading from bottom), the 2nd and 3rd change
the second column as $124\ol{4}$, the 4th changes the third column as $124$ and the 5th changes
the fourth column as $134$. Therefore, $\Phi(P)$ is given by
\[
\Yboxdim12pt
\newcommand{\onebar}{\ol{1}}
\newcommand{\fourbar}{\ol{4}}
\young(4\fourbar,3444\onebar,222333\onebar,111122222)\;.
\]
\end{example}

For a word $\boldsymbol{a}=a_1a_2\cdots a_m$ let $e_{\boldsymbol{a}}=e_{a_m}\cdots e_{a_2}e_{a_1}$. We use this
convention also for $f$. Note that the order in $e_{\boldsymbol{a}}$ is reversed from $\boldsymbol{a}$. 
Next proposition shows how we get to the highest element from a $\pm$-diagram by applying $e_i$'s.

\begin{prop}  \label{prop:to highest}
Let $P$ be a $\pm$-diagram whose outer shape has depth $r$. Suppose $r\le n-1$ for $B_n$, $r\le n$
for $C_n$, $r\le n-2$ for $D_n$. Let $c_i$ be the number of columns of
the outer shape with height $i$. Let $c_i^-$ (resp. $c_i^+$) be the number of $-$ (resp. $+$) 
at height $i$. Define a word $\boldsymbol{a}$ by
\[
\boldsymbol{a}=1^{a_1}2^{a_2}\cdots(n-1)^{a_{n-1}}n^{\gamma a_n}(n-1)^{\gamma'a'_{n-1}}(n-2)^{a'_{n-2}}
\cdots1^{a'_1},
\]
where
\[
a_i=\sum_{j=1}^{i-1}c_j^-+(c_i-c_i^+)+\sum_{j=i+1}^n(c_j+c_j^--c_j^+),\qquad
a'_i=\sum_{j=1}^ic_j^-,
\]
where $\gamma=2$ for $B_n$, $\gamma=1$ for the other cases, and $\gamma'=0$ for $D_n$, $\gamma'=1$
for the other cases.
Then $e_{\boldsymbol{a}}\Phi(P)$ is the hightest weight element with highest weight given by its outer shape.
Moreover, at each step when we apply $e_i^{a_i}$ or $e_i^{a'_i}$, including 
$e_n^{\gamma a_n},e_{n-1}^{\gamma'a'_{n-1}}$, the action is maximal, namely, if we apply $e_i^{a_i+1}$
or $e_i^{a'_i+1}$, the outcome turns out $0$.
\end{prop}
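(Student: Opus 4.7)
The plan is to verify the proposition inductively by tracking, one block at a time, the effect of $e_i^{a_i}$ or $e_i^{a'_i}$ on the intermediate tableau, using the Japanese reading word and the signature rule of Section~\ref{subsec:crystals}. Since $\Phi(P)$ is $J$-highest for $J=\{2,\ldots,n\}$ by Proposition~\ref{P:branch}, $e_1$ is the only operator that can act nontrivially at the outset, which is consistent with the order prescribed in $\boldsymbol{a}$. The overall strategy is to maintain at each block the invariant that the reduced $i$-signature of the current tableau has exactly $a_i$ (respectively $a'_i$) unpaired minuses, and to record how the tableau changes so this invariant propagates forward.

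For the \emph{ascending pass} $i=1,\ldots,n-1$, I would prove by induction on $i$ the following structural invariant: after applying $e_1^{a_1}\cdots e_{i-1}^{a_{i-1}}$ to $\Phi(P)$, the non-barred entries in rows $1,\ldots,i-1$ have become uniformly $1,\ldots,i-1$ respectively, while each original $\ol{1}$-entry has been incremented to $\ol{i}$, and the $(i+1)$-entries from the base strings in rows $\ge i$ are still present. The $i$-signature of this intermediate tableau then receives minuses from $(i+1)$-entries and from $\ol{i}$-entries, and pluses from $i$-entries introduced by $+$-cells and from $\ol{i+1}$-entries. In Japanese reading order---right to left across columns, bottom to top within a column---the $+$'s interior to a column pair off with the $-$'s immediately above them, leaving precisely $a_i$ unpaired minuses. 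The three summands in the formula correspond naturally to the three sources: $\sum_{j=1}^{i-1}c_j^-$ from the incremented $\ol{i}$'s, $(c_i-c_i^+)$ from the $(i+1)$-entries in height-$i$ columns unaffected by a $+$, and $\sum_{j=i+1}^{n}(c_j+c_j^--c_j^+)$ from the residual $(i+1)$'s and incremented barred entries in taller columns. Applying $e_i^{a_i}$ then converts these minus-contributors to their plus-contributing counterparts, making row $i$ uniformly $i$ on its non-barred part and incrementing the barred entries one further step, thus advancing the induction.

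The \emph{turnaround} $e_n^{\gamma a_n}$ and the \emph{descending pass} follow the same signature-tracking template, but the crystal graph at node $n$ now becomes relevant. For $D^{(1)}_n$ ($\gamma=1,\gamma'=0$), $e_n$ sends $\ol{n}$ to $n-1$ through the fork, directly unbarring the terminal barred entries; no $e_{n-1}$ block is needed on return. For $B^{(1)}_n$ ($\gamma=2,\gamma'=1$), the short simple root $\alpha_n$ forces a doubled multiplicity in the signature count, and $\ol{n-1}$-entries produced at the turnaround must be reduced by an $e_{n-1}^{a'_{n-1}}$ block. For $A^{(2)}_{2n-1}$ ($\gamma=1,\gamma'=1$), the situation mirrors $B^{(1)}_n$ but with $\alpha_n$ long. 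In the descent, each $e_i^{a'_i}$ acts on the entries produced from the former $\ol{1}$'s once they have been unbarred at the turnaround, reducing them back down; the count $a'_i=\sum_{j=1}^{i}c_j^-$ equals exactly the number of $-$-cells in $P$ at height $\le i$, hence the number of residual entries awaiting reduction at this step. After the final block $e_1^{a'_1}$, the tableau becomes the unique highest weight element of shape $\os(P)$, completing the proof.

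The main obstacle is the careful signature bookkeeping at each step, in particular verifying the pairing claims in Japanese reading order so that the unpaired minuses match the formula exactly. This is most delicate at the turnaround, where the types $B^{(1)}_n$, $A^{(2)}_{2n-1}$, and $D^{(1)}_n$ require separate case analyses because of the differing crystal graph structure at node $n$, and at the junction where $(i+1)$-entries and barred entries coexist in rows $\ge i$. Maximality of each block---the claim that $e_i^{a_i+1}$ or $e_i^{a'_i+1}$ applied to the intermediate tableau vanishes---is an automatic consequence of the invariant: the reduced $i$-signature at the start of each block contains exactly the prescribed number of unpaired minuses, so applying $e_i$ once more necessarily yields $0$.
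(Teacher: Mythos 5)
Your overall architecture (ascending pass, turnaround at node $n$ with the type-dependent exponents $\gamma,\gamma'$, descending pass, maximality read off from the reduced signature) is sound, but the route you take is genuinely different from the paper's and, as sketched, has a gap at its central step. The paper first disposes of the case with no $+$ in $P$, where every $i$-signature is a pure string of minuses and no reduction is needed, and then inducts on the number of $+$'s: comparing $P$ with the diagram $P'$ obtained by deleting the lowest $+$ (at height $h$), the reading words differ in a single subword, the reduced $j$-signatures differ by exactly one sign for $j\le h$, and the two computations merge after applying $e_h$ maximally. That comparison argument never requires computing the full reduced signature of a general $\Phi(P)$.

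Your proposal, by contrast, commits to computing that full reduced signature at every block, and the two structural claims you rely on are not correct as stated. First, the invariant that ``each original $\ol{1}$-entry has been incremented to $\ol{i}$'' fails for the barred entries that $\Phi$ has already turned into $\ol{h+1}$ via a $+$ at height $h\ge i$: those are untouched by $e_1,\ldots,e_{i-1}$ and contribute pluses, not minuses, to the $i$-signature (indeed $c_i^-$ is absent from the formula for $a_i$ precisely because of such a cancellation). Second, and more seriously, the claim that ``the $+$'s interior to a column pair off with the $-$'s immediately above them'' is column-local, whereas the real danger is global: the surviving pluses (from a $+$ at height exactly $i$, or from an $\ol{i+1}$) sit in the reading word ahead of minuses contributed by columns further to the left, and the formula for $a_i$ is correct only if these do \emph{not} cancel against them. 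You give no argument for this, and it is exactly where a naive count could fail. Note also that $\Phi$ assigns each $+$ of $P$ to the first available $\ol{1}$ or unmodified string in a greedy top-to-bottom, left-to-right scan of the whole tableau, so a $+$ in one column of $P$ may modify an entry in a different column of $\Phi(P)$; a column-by-column invariant therefore does not match how $\Phi$ is actually defined. To repair your approach you would need a precise description of where the pluses and minuses fall in the reading word of each intermediate tableau; alternatively, the paper's reduction to the sign-free case followed by induction on the number of $+$'s sidesteps the issue entirely.
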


\begin{proof}
Suppose $r\neq n$ for type $C_n$.
We first prove the claim when there is no $+$ in $P$. Set $d_i=c_i-c_i^-$. Then the Japanese reading
word of the tableau corresponding to $P$ is given by 
\[
\ol{1}^{c_1^-}2^{d_1}(2\ol{1})^{c_2^-}(23)^{d_2}\cdots(23\cdots r\ol{1})^{c_r^-}(23\cdots r\,r+1)^{d_r}.
\]
The 1-signature is just given by $-^{\ve_1}$, where $\ve_1=c_1+\sum_{i=2}^r(c_i+c_i^+)$, and there is 
no need to reduce. Hence one can apply $e_1^{\ve_1}$. Calculating similarly for $i=2,3,\cdots,n,n-2,
\cdots,1$ one always has a simple $i$-signature of the form $-^{\ve_i}$, and we arrive at the highest
weight element as desired.

Next we consider the general case. We prove by induction on $N$, the number of $+$. If $N=0$, the claim
is proven. Suppose $N>0$ and let $h$ be the height of the lowest $+$ in $P$. Let $P'$ be the same
$\pm$-diagram as $P$ except that there are one less $+$'s at height $h$. Compare the Japanese reading
word of the corresponding tableaux of $P$ and $P'$. The difference is:
\begin{align*}
\text{either (i) }&\text{there is a subword $w=12\cdots h\,h+2\cdots$ in $P$}\\
&\text{but $w'=23\cdots h+1\,h+2\cdots$ in $P'$},\\
\text{or (ii) }&\text{there is a letter $\ol{h+1}$ in $P$ but $\ol{1}$ in $P'$}.
\end{align*}
Apart from this difference in two words, there are subwords of the form $23\cdots$ or letters $\ol{1}$ on 
the left and subwords of the form $12\cdots h'\,h'+2\cdots$ or $\ol{h'+1}$ for some $h'\ge h$ on the
right. Let us calculate the 1-signatures of both words. They are $-^A+^B$ for $P$ and $-^{A+1}+^{B-1}$
for $P'$. (There are no $+-$ pairs.) After applying $e_1^{\max}$ on both $P$ and $P'$, the 2-signatures
also turn out of the form $-^{A'}+^{B'}$ for $P$ and $-^{A'+1}+^{B'-1}$ for $P'$. The difference is 
that there is $12\cdots h\,h+2\cdots$ or $\ol{h+1}$ in $P$ but $13\cdots h+1\,h+2\cdots$ or $\ol{2}$ in
$P'$. Similar situations continue until we apply $e_h$, and after applying $e_h^{\max}$, the two results
coincide. Hence we should have the desired result.

The proof in the case of $r=n$ for type $C_n$ is almost the same. The only difference is that we first
treat the case when there is no $+$ in $P$ except at height $n$, since there is no empty column of 
height $n$. Hence we omit the proof.
\end{proof}

\begin{example}
For a $\pm$-diagram given in Example \ref{ex:pm-diag} set $\boldsymbol{a}=1^72^53^54^35^3\cdots4^33^22$.
Then, according to the previous proposition $e_{\boldsymbol{a}}\Phi(P)$ is a highest weight element.
\end{example}

Later in this paper we will need to apply $e_1$ to a $\pm$-diagram $P$. 
Since $e_1P$ is no longer $J$-highest,
we have to use a pair of $\pm$-diagrams $(P,p)$ to consider $\{3,4,\ldots,n\}$-highest elements.
Namely, $P$ represents a $J$-highest element and $p$ represents a $\{3,4,\ldots,n\}$-highest element
in the $X_{n-1}$-component whose highest weight vector correponds to $P$. Under this bijection we 
identify a $\{3,4,\ldots,n\}$-highest element $b$ with a pair of $\pm$-diagram $(P,p)$. 
To describe the action of $e_1$ on $(P,p)$ perform the following algorithm:
\begin{enumerate}
\item Successively run through all $+$ in $p$ from left to right and, if possible, pair it with 
the leftmost yet unpaired $+$ in $P$ weakly to the left of it.
\item Successively run through all $-$ in $p$ from left to right and, if possible, pair it with
the rightmost yet unpaired $-$ in $P$ weakly to the left.
\item Successively run through all yet unpaired $+$ in $p$ from left to right and, if possible,
pair it with the leftmost yet unpaired $-$ in $p$.
\end{enumerate}

\begin{prop} \cite[Lemma 5.1]{S:2008} \label{prop:e1 action}
If there is an unpaired $+$ in $p$,  $e_1$ moves the rightmost unpaired $+$ in $p$ to $P$. 
Else, if there is an unpaired $-$ in $P$, $e_1$ moves the leftmost unpaired $-$ in $P$ to $p$.
Else $e_1$ annihilates $(P,p)$.
\end{prop}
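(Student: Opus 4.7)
The plan is to compute the $1$-signature of the Japanese reading word of the KN tableau $b=\Phi(P,p)$ and match its bracket reduction with the three-step pairing procedure preceding the statement. Since $b$ is $\{3,\ldots,n\}$-highest, only the letters $1,2,\bar 2,\bar 1$ appearing in $w(b)$ affect the $1$-signature (they contribute $+,-,+,-$ respectively); every other letter is invisible to $e_1$. Thus the argument reduces to combinatorial bookkeeping of where these four letters occur in the columns of $b$.

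The first step is a column-by-column description of $b$ via a two-stage application of the construction of Section~\ref{subsec:pm diag}: first $\Phi$ applied to $P$ produces an $X_{n-1}$-highest element, and then a shifted version of the same construction driven by $p$ modifies each column by placing $\bar 2$'s and $2$'s at heights $\ge 2$. Splitting into the nine cases indexed by the local mark in $P$ (empty, $+$, $-$) crossed with the local mark in $p$, I would tabulate exactly which of $1,2,\bar 2,\bar 1$ occur in each column, together with their vertical order. This produces, for each column, a local sign-string of length at most two that is contributed to the $1$-signature, so the full $1$-signature is the concatenation of these strings in Japanese column order.

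Next, I would perform bracket reduction on the signature and show that the rounds of cancellation mirror the three steps of the algorithm. Step (1), which matches each $+$ in $p$ with the leftmost unpaired $+$ in $P$ weakly to the left, corresponds to cancellations of the form $+-+$ in which the central $-$ arises from an internal letter (a $\bar 1$ displaced when $\bar 1 \to \bar{h+1}$, or the $2$ inserted by the $p$-modification); step (2) is the symmetric statement for $-$'s via local $-+-$ patterns; step (3) handles the residual intra-$p$ cancellations. After all three rounds, the reduced $1$-signature consists precisely of the unpaired $-$ marks of $P$, read left-to-right, followed by the unpaired $+$ marks of $p$, again left-to-right. By the signature rule, $e_1$ flips the rightmost $-$ in the reduced signature; if an unpaired $+$ in $p$ exists, this rightmost $-$ sits immediately to its left, and tracing the flip back through the two-stage construction moves the rightmost unpaired $+$ from $p$ to $P$; if not, the rightmost $-$ is the leftmost unpaired $-$ of $P$, and the flip $\bar 1\to\bar 2$ moves it from $P$ to $p$; if there are no unpaired marks of either kind, $e_1 b = 0$. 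The principal difficulty will be the case analysis underlying the middle paragraph --- one must verify that the reading-order interactions between $P$- and $p$-contributions, both within a column and across adjacent columns, faithfully reproduce the ``weakly to the left'' conditions of (1) and (2), with particular care at columns of maximal height (height $n$ in type $C_n$, and spin-type columns in types $B_n, D_n$).
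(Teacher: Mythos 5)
This proposition is not proved in the paper at all: it is imported verbatim from \cite[Lemma~5.1]{S:2008}, so there is no in-paper argument to compare your route against. Your general strategy --- reduce to the $1$-signature of the reading word, observe that only the letters $1,2,\ol{2},\ol{1}$ are visible to $e_1$, and match the bracket reduction against the three pairing steps --- is the natural one and is essentially what the original reference does. But as written your proposal is an outline whose entire mathematical content (the case-by-case tabulation and the verification that the reduction mirrors steps (1)--(3)) is deferred, and the one structural claim you do commit to is wrong in a way that breaks the conclusion.

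Concretely: an unpaired $+$ of $p$ at height $h$ materializes in the tableau as a letter $2$ (the second-stage analogue of ``a $+$ in $P$ produces a $1$''), and since $\ve_1(2)=1$ this contributes a $-$, not a $+$, to the $1$-signature. That is exactly why $e_1$ can reach it: flipping that $-$ sends $2\mapsto 1$, which is the statement that the $+$ migrates from $p$ to $P$. Your final paragraph instead puts the unpaired $+$'s of $p$ into the $+$-part of the reduced signature and has $e_1$ flip a $-$ ``immediately to its left''; but by your own description that $-$ is an unpaired $-$ of $P$, i.e.\ a letter $\ol{1}$, and flipping it gives $\ol{1}\mapsto\ol{2}$, which moves a $-$ of $P$ into $p$ --- the \emph{second} clause of the proposition, not the first. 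So the stated mechanism cannot yield the priority ``unpaired $+$ in $p$ first.'' The correct picture is that the $-$-part of the reduced signature consists of the $\ol{1}$'s from unpaired $-$'s of $P$ followed by the $2$'s from unpaired $+$'s of $p$, and one must prove this ordering. That ordering is itself delicate: cells of the horizontal strips $\mu/\la$ sit at the right ends of rows, the reading word traverses columns from right to left and bottom to top, and $\Phi$ distributes the $+$'s over columns from the left, so ``rightmost unpaired $+$ in $p$'' does not translate tautologically into ``rightmost $-$ of the reduced signature.'' Until that bookkeeping is done explicitly (it is precisely the ``principal difficulty'' you flag and postpone), the argument is not a proof.
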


\section{KR crystal $B^{r,k}$ and combinatorial $R$-matrix}

Let $\geh$ be an affine Lie algebra of type $D_n^{(1)}$, $B_n^{(1)}$, or $A_{2n-1}^{(2)}$ with the 
underlying finite-dimensional simple Lie algebra $\geh_0$ of type $X_n=D_n,B_n$, or $C_n$, respectively.
We label the vertices of the corresponding Dynkin diagram according to \cite{Kac}, so the index set of 
$\geh$ (resp. $\geh_0$) is $I=\{0,1,\ldots,n\}$ (resp. $I_0:=I\setminus\{0\}=\{1,2,\ldots,n\}$).
In this section we review KR crystals $B^{r,k}$ of type $\geh$ given in~\cite{OS:2008,S:2008}
for $k\in\Z_{\ge1}$ and $1\le r\le n-2$ for $D_n^{(1)}$, $1\le r\le n-1$ for $B_n^{(1)}$ and
$1\le r\le n$ for $A_{2n-1}^{(2)}$.

As an $X_n$-crystal, $B^{r,k}$ is given by
\begin{equation} \label{eq:classical decomp}
	B^{r,k} \cong \bigoplus_\La B(\La),
\end{equation}
Here $B(\La)$ is the $X_n$-crystal of highest weight $\La$ and the sum runs over all dominant weights $\La$ 
that can be obtained from $s\La_r$ by the removal of vertical dominoes, where $\La_i$ are the $i$-th 
fundamental weights of $X_n$.

In order to define the actions of $e_0$ and $f_0$ we first consider an automorphism $\sigma$ on the 
KR crystal $B^{r,k}$. The Dynkin diagrams of type $D_n^{(1)},B_n^{(1)}$, and $A_{2n-1}^{(2)}$ all have
an automorphism interchanging nodes $0$ and $1$. $\sigma$ corresponds to this Dynkin diagram automorphism.
By construction $\sigma$ commutes with $e_{i}$ and $f_{i}$ for $i\in J:=\{2,3,\ldots,n\}$. 
Hence it suffices to define $\sigma$ on $J$-highest elements. Because of the
bijection $\Phi$ from $\pm$-diagrams to $J$-highest elements as described in 
Section~\ref{subsec:pm diag}, it suffices to define the corresponding map $\mathfrak{S}$ on $\pm$-diagrams.
Let $P$ be a $\pm$-diagram of shape $\La/\la$. Let $c_i=c_i(\la)$ be
the number of columns of height $i$ in $\la$ for all $1\le i<r$ with
$c_0=k-\la_1$. If $i\equiv r-1 \pmod{2}$, then in $P$, above each
column of $\la$ of height $i$, there must be a $+$ or a $-$.
Interchange the number of such $+$ and $-$ symbols. If $i\equiv r
\pmod{2}$, then in $P$, above each column of $\la$ of height $i$,
either there is no sign or a $\mp$ pair. Suppose there are $p_i$
$\mp$ pairs above the columns of height $i$. Change this to
$(c_i-p_i)$ $\mp$ pairs. The result is $\mathfrak{S}(P)$, which has the
same inner shape $\la$ as $P$ but a possibly different outer shape.

Let $b\in B^{r,k}$ and $\boldsymbol{a}=a_1a_2\cdots a_\ell$ be such that
$e_{\boldsymbol{a}}b$ is a $J$-highest element. Then, $\sigma(b)$ is given by 
\begin{equation} \label{eq:sigma}
\sigma(b) 
= f_{\mathrm{Rev}(\boldsymbol{a})} \circ \Phi \circ \mathfrak{S} \circ \Phi^{-1} \circ e_{\boldsymbol{a}}(b),
\end{equation}
where $\mathrm{Rev}(\boldsymbol{a})=a_\ell\cdots a_2a_1$.
The affine crystal operators $e_{0}$ and $f_{0}$ are then defined as
\begin{equation} \label{eq:0 action}
e_{0} = \sigma \circ e_{1} \circ \sigma, \quad
f_{0} = \sigma \circ f_{1} \circ \sigma.
\end{equation}

If $r=1$, the structure of the KR crystal turns out simple. An crystal element of $B^{1,l}$ 
can be identified with one-row KN tableau of length $l$ with letters from $i,\ol{i}$ ($1\le i\le n$)
(and $0$ for $B_n^{(1)}$). Denoting the number of letters $i,\ol{i}$ or $0$ by $x_i,\ol{x}_i$ or $x_0$, 
we have the so-called coordinate representation of $B^{1,l}$ \cite{KKM,O}.
\begin{equation}
B^{1,l}=\left\{
\begin{array}{l}
\{(x_1,\ldots,x_n,\xb_n,\ldots,\xb_1)\mid x_n\mbox{ or }\xb_n=0,\sum_{i=1}^n(x_i+\xb_i)=l\}
\text{ for }D_n^{(1)},\\
\{(x_1,\ldots,x_n,x_0,\xb_n,\ldots,\xb_1)\mid x_0=0\mbox{ or }1,\sum_{i=1}^n(x_i+\xb_i)+x_0=l\}
\text{ for }B_n^{(1)},\\
\{(x_1,\ldots,x_n,\xb_n,\ldots,\xb_1)\mid \sum_{i=1}^n(x_i+\xb_i)=l\}
\text{ for }A_{2n-1}^{(2)}.
\end{array}
\right.
\end{equation}
The action of $e_i,f_i$ for $i=1,2,\ldots,n$ can be calculated as we explained in the last paragraph
of section \ref{subsec:crystals}. The action of $e_0,f_0$ is given by
\begin{equation}
\begin{split}
e_0b&=\left\{
\begin{array}{l}
(x_1,x_2-1,\ldots,\xb_2,\xb_1+1)\mbox{ if }x_2>\xb_2,\\
(x_1-1,x_2,\ldots,\xb_2+1,\xb_1)\mbox{ if }x_2\le\xb_2,
\end{array}
\right.\\
f_0b&=\left\{
\begin{array}{l}
(x_1,x_2+1,\ldots,\xb_2,\xb_1-1)\mbox{ if }x_2\ge\xb_2,\\
(x_1+1,x_2,\ldots,\xb_2-1,\xb_1)\mbox{ if }x_2<\xb_2.
\end{array}
\right.\\
\end{split}
\end{equation}
We list the values of $\ve_i,\vp_i$ below.
\begin{equation} \label{eq:veps vphi}
\begin{split}
&\ve_0(b)=x_1+(x_2-\xb_2)_+,\quad \vp_0(b)=\xb_1+(\xb_2-x_2)_+,\\
&\ve_i(b)=\xb_i+(x_{i+1}-\xb_{i+1})_+,\quad \vp_i(b)=x_i+(\xb_{i+1}-x_{i+1})_+\text{ if }i\neq0,n,\\
&(\ve_n(b),\vp_n(b))=\left\{
\begin{array}{l}
(\xb_{n-1}+\xb_n,x_{n-1}+x_n)\text{ for }D_n^{(1)},\\
(2\xb_n+x_0,2x_n+x_0)\text{ for }B_n^{(1)},\\
(\xb_n,x_n)\text{ for }A_{2n-1}^{(2)},
\end{array}\right.
\end{split}
\end{equation}
where $(x)_+=\max(x,0)$.

Let us now consider a tensor product of KR crystals $B^{r,k}\ot B^{r',k'}$. It is known
\cite{Ka3,O} that there exists a unique bijection $R$, called combinatorial $R$-matrix, 
commuting with Kashiwara operators $e_i,f_i$ for any $i=0,1,\ldots,n$. Since $R$ preserves the weight, 
$u_0\ot u'_0$ should be sent to $u'_0\ot u_0$ by $R$, where $u_0$ (resp. $u'_0$) is the 
$I_0$-highest elements
of $B(k\La_r)$ (resp. $B(k'\La_{r'})$) in $B^{r,k}$ (resp. $B^{r',k'}$). For the other elements
the image is uniquely determined, since $B^{r,k}\ot B^{r',k'}$ is known to be connected \cite{FOS2}.
Next we explain the energy function $H$. Let $b\ot b'\in B^{r,k}\ot B^{r',k'}$ correspond to 
$\bt'\ot\bt\in B^{r',k'}\ot B^{r,k}$ by $R$. Suppose $e_i(b\ot b')\neq0$. 
Applying $e_i$ on both sides of $R(b\ot b')=\bt'\ot\bt$, we are led to consider the following four cases:
\begin{align*}
\mbox{(LL)}&\quad R(e_ib\ot b')=e_i\bt'\ot\bt,\\
\mbox{(LR)}&\quad R(e_ib\ot b')=\bt'\ot e_i\bt,\\
\mbox{(RL)}&\quad R(b\ot e_ib')=e_i\bt'\ot\bt,\\
\mbox{(RR)}&\quad R(b\ot e_ib')=\bt'\ot e_i\bt.
\end{align*}
Then the function $H$ is uniquely determined, up to adding a constant, by
\begin{equation} \label{eq:e-func}
H(e_i(b\ot b'))=
\begin{cases}
H(b\ot b')+1
&\mbox{ if }i=0\mbox{ and case (LL) occurs},\\
H(b\ot b')-1
&\mbox{ if }i=0\mbox{ and case (RR) occurs},\\
H(b\ot b')&\mbox{ otherwise}.
\end{cases}
\end{equation}
Although it is not obvious that such a function exists, it is shown to exist \cite{Ka3,O}.

Next we investigate conditions for an element of $B^{r,k}\ot B^{1,l}$ or $B^{1,l}\ot B^{r,k}$ to be
$I_0$-highest. Recall the following fundamental fact:
\begin{equation} \label{eq:ht cond}
e_i(b\ot b')=0\mbox{ if and only if }e_ib=0\text{ and }\ve_i(b')\le\langle h_i,\wt b\rangle.
\end{equation}
In particular, if $b\ot b'$ is $I_0$-highest, then $b$ has to be $I_0$-highest.

\begin{prop} \label{prop:ht cond 1}
Let $\mu=\sum_i\mu_i\La_i$ be a dominant integral weight that appears in \eqref{eq:classical decomp} as 
highest weight. By abuse of notation let $\mu$ also stand for the highest KN tableau of weight $\mu$. 
Let $x$ be an element of $B^{1,l}$ represented by coordinates. Then, an element $b\ot x$ of 
$B^{r,k}\ot B^{1,l}$ is $I_0$-highest, if and only if $b=\mu$ for some $\mu$ as above and the following
conditions for $x$ are satisfied.
\begin{itemize}
\item[(i)] $x_i=0$ if $i\ge r+2$,
\item[(ii)] $\ol{x}_i=0$ if $i\ge r+2$ or $i\equiv r+1\;(2)$,
\item[(iii)] $x_{i+1}+\ol{x}_i\le\mu_i$ if $i\le r$ and $i\equiv r\;(2)$,
\item[(iv)] $x_i\le\ol{x}_i$ if $1<i\le r$ and $i\equiv r\;(2)$.
\end{itemize}
In the case of $r=n$ where $\geh=A_{2n-1}^{(2)}$, $x_{n+1}$ appearing in (iii) should be understood as $0$.
\end{prop}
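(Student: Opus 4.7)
The plan is to apply the criterion \eqref{eq:ht cond} one index at a time: $b\ot x$ is $I_0$-highest iff $b$ is $I_0$-highest in $B^{r,k}$ and $\ve_i(x)\le\langle h_i,\wt b\rangle$ for every $i\in I_0$. The first clause forces $b$ to be the highest weight tableau $\mu$ of some summand $B(\mu)$ appearing in \eqref{eq:classical decomp}. Because $\mu$ is obtained from $k\La_r$ by removing vertical dominoes, all its columns have heights of the same parity as $r$, and therefore the multiplicity $\mu_i=\langle h_i,\mu\rangle$ is nonzero only when $1\le i\le r$ and $i\equiv r\pmod 2$. What remains is to translate each inequality $\ve_i(x)\le\mu_i$, using the explicit formulas \eqref{eq:veps vphi}, into the coordinate conditions (i)--(iv).

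I would prove this by descending induction on $i$, starting at $i=n$. In types $D_n^{(1)}$ and $B_n^{(1)}$ we have $r<n$, so $\mu_n=0$; the formula for $\ve_n(x)$ then forces $\xb_{n-1}=\xb_n=0$ (resp.\ $\xb_n=x_0=0$). In type $A_{2n-1}^{(2)}$, if $r<n$ we get $\xb_n=0$, while in the boundary case $r=n$ the inequality becomes $\xb_n\le\mu_n$, matching (iii) under the convention $x_{n+1}=0$. For $r<i<n$, the equation $\xb_i+(x_{i+1}-\xb_{i+1})_+=0$ together with the previously established $\xb_{i+1}=0$ yields $\xb_i=0$ and $x_{i+1}=0$, producing (i) and the $i\ge r+2$ part of (ii). For $i\le r-1$ with $i\equiv r+1\pmod 2$, the same equation supplies $\xb_i=0$ (the remaining part of (ii)) together with $x_{i+1}\le\xb_{i+1}$, which is (iv) at index $i+1$. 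Finally, for $i\le r$ with $i\equiv r\pmod 2$, the preceding step has already forced $\xb_{i+1}=0$, so $\ve_i(x)\le\mu_i$ collapses to $\xb_i+x_{i+1}\le\mu_i$, which is (iii). The converse direction is a direct substitution: assuming (i)--(iv), one plugs the vanishing coordinates into \eqref{eq:veps vphi} and checks $\ve_i(x)\le\mu_i$ at every $i$ by the same case split.

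The main obstacle is not conceptual depth but careful bookkeeping: three affine types (with three different expressions for $\ve_n$), the parity-governed vanishing pattern of $\mu_i$, and the boundary case $r=n$ in type $A_{2n-1}^{(2)}$ must be tracked in parallel. A small point to flag in type $B_n^{(1)}$ is that the equation $\ve_n(x)=0$ also forces $x_0=0$, a condition not explicitly listed among (i)--(iv); it should be understood as an implicit consequence of the $i=n$ case.
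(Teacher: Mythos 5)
Your proof is correct and follows exactly the route the paper takes (its proof is the one-line instruction to apply \eqref{eq:ht cond} for $i=1,\dots,n$ together with the formulas \eqref{eq:veps vphi}); you have simply carried out the bookkeeping explicitly, including the boundary case $r=n$ for $A_{2n-1}^{(2)}$. Your remark that $x_0=0$ is forced in type $B_n^{(1)}$ but not listed among (i)--(iv) is a valid and worthwhile observation.
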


\begin{proof} 
Apply \eqref{eq:ht cond} for $i=1,2,\ldots,n$ and use the formula for $\ve_i$ in \eqref{eq:veps vphi}.
\end{proof}

In what follows, for a $\pm$-diagram $P$ we use the following notation. Let $\ast$ be one of $\cdot,+,-,\mp$
($\cdot$ stands for emptiness). We denote by $p_i^\ast$ the number of columns of the outer shape of $P$ of
height $i$ that contain $\ast$.

\begin{center}
\unitlength 10pt
\begin{picture}(30,10)
\put(0,0){\line(1,0){30}}
\put(5,6){\line(1,0){20}}
\multiput(0,0)(5,0){4}{
\qbezier(5,6)(5.3,6.5)(6.7,6.8)
\qbezier(8.3,6.8)(9.7,6.5)(10,6)
}
\put(5,6){\line(0,1){2}}
\put(25,6){\line(0,-1){2}}
\multiput(25.3,3.9)(0.3,-0.1){16}{\circle*{0.1}}
\multiput(4.7,8.1)(-0.3,0.1){16}{\circle*{0.1}}
\put(7.1,6.8){$p_i^\cdot$}
\put(12.1,6.8){$p_i^+$}
\put(17.1,6.8){$p_i^-$}
\put(22.1,6.8){$p_i^\mp$}
\put(5,3.7){\vector(0,1){2.3}}
\put(5,2.3){\vector(0,-1){2.3}}
\put(4.8,2.7){$i$}
\put(10,5.2){$+$}
\put(14.2,5.2){$+$}
\put(15,5.2){$-$}
\put(19.2,5.2){$-$}
\put(20,5.2){$-$}
\put(24.2,5.2){$-$}
\put(20,4.3){$+$}
\put(24.2,4.3){$+$}
\multiput(0,0)(5,0){3}{
\multiput(10.95,5.42)(0.3,0){11}{\circle*{0.1}}
}
\multiput(20.95,4.52)(0.3,0){11}{\circle*{0.1}}
\end{picture}
\end{center}

\begin{prop} \label{prop:ht cond 2}
An element $b\ot b'$ of $B^{1,l}\ot B^{r,k}$ is $I_0$-highest, if and only if $b=1^l$ and $b'$ is a 
$J$-highest element whose corresponding $\pm$-diagram $P$ satisfies
\[
\sum_{1\le i\le r\atop i\equiv r\;(2)}(p_i^\cdot+p_i^-)+
\sum_{1<i\le r\atop i\equiv r\;(2)}(p_i^-+p_i^\mp)\le l.
\]
\end{prop}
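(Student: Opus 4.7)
The plan is to apply the fundamental fact \eqref{eq:ht cond} for every $i \in I_0$. For $b \in B^{1,l}$ to be $I_0$-highest, the formulas \eqref{eq:veps vphi} force $b = 1^l$ (the unique such element, with all coordinates zero except $x_1 = l$), and then $\wt(1^l) = l\La_1$, so $\langle h_i, \wt(1^l)\rangle$ is $l$ for $i=1$ and $0$ otherwise. Consequently $b\ot b'$ is $I_0$-highest if and only if $\ve_i(b')=0$ for every $i\ge 2$ (so that $b'$ is $J$-highest, and hence $b'=\Phi(P)$ for a unique $\pm$-diagram $P$) and $\ve_1(b')\le l$. The proposition therefore reduces to computing $\ve_1(\Phi(P))$.

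Since $b'\in B^{r,k}$, the outer shape $\La$ of $P$ is obtainable from $k\La_r$ by removing vertical dominoes, so all its nonzero column heights lie in $\{i : 1\le i\le r,\ i\equiv r\pmod 2\}$. Under this constraint the parity restriction in the displayed sum is automatic, and it suffices to prove
\[
\ve_1(\Phi(P)) \;=\; \sum_i(p_i^\cdot+p_i^-) \;+\; \sum_{i>1}(p_i^-+p_i^\mp),
\]
the sums being taken over all column heights $i$ of the outer shape of $P$.

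To compute $\ve_1(\Phi(P))$ I would apply the signature rule to the Japanese reading word of $\Phi(P)$, noting that only the letters $1,\ol2$ contribute $+$ and only $2,\ol1$ contribute $-$ to the $1$-signature. Each column of $P$ of height $i$ produces in $\Phi(P)$ a column of explicit form, determined by its $\pm$-type and by whether the $+$'s of $P$ (which fill columns from left to right) have altered its $\ol1$ to $\ol{h+1}$ or its string $23\cdots k$ to $12\cdots h\,h+2\cdots k$ for the corresponding height $h$. A case-by-case analysis of these post-modification column types, together with the $+\,-$ cancellations between consecutive columns in the signature, should yield that a height-$i$ column of $P$ contributes to $\ve_1$ exactly: $1$ if it is a $(\cdot)$-column; $1$ if it is a $(-)$-column at $i=1$ and $2$ if at $i\ge2$; $1$ if it is an $(\mp)$-column (only possible for $i\ge2$); and $0$ if it is a $(+)$-column. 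Summing these contributions recovers the formula above.

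The main obstacle is the cancellation bookkeeping. A $+$ in $P$ at height $h$ converts a $-$ in the signature into a $+$ only when $h=1$ (via $2\to 1$ or $\ol1\to\ol2$), whereas for $h\ge2$ it either creates a new $+$ but leaves the $-$ (which reappears as a $2$ at position $2$ of the modified string) or removes a $-$ via $\ol1\to\ol{h+1}$ without creating a $+$. Organising this case analysis cleanly---likely by induction on the total number of $+$'s in $P$ or on $|\La|$---is the delicate step. An alternative is to invoke Proposition~\ref{prop:e1 action} directly: starting from $(P,\emptyset)$, iteratively apply $e_1$, which at each step moves one sign between $P$ and the auxiliary $\pm$-diagram $p$, and verify that the total number of applications before annihilation matches the displayed sum.
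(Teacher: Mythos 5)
Your reduction is exactly the paper's: apply \eqref{eq:ht cond} for each $i\in I_0$ to force $b=1^l$, deduce that $b'$ must be $J$-highest, hence $b'=\Phi(P)$, and that the only remaining condition is $\ve_1(\Phi(P))\le l$; your remark that the parity restriction in the displayed sum is automatic from the classical decomposition of $B^{r,k}$ is also right, and your claimed total equals $\sum_i(p_i^\cdot+2p_i^-+p_i^\mp)-p_1^-$, which is the correct value of $\ve_1(\Phi(P))$. The only divergence is in how that value is computed: the paper's (one-line) proof invokes Proposition~\ref{prop:e1 action}, i.e.\ the pairing algorithm on the pair of $\pm$-diagrams, which is precisely the route you mention only as an alternative; your primary route is a direct signature count on the Japanese reading word. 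That route does work, but one caveat on your bookkeeping: under $\Phi$ the $+$'s of $P$ are processed left to right and each alters the \emph{next available} $\ol{1}$ or $2$ in the top-to-bottom, left-to-right scan, not necessarily a letter sitting over its own column of $P$ (in Example~\ref{ex:pm-diag} the $+$ in the fourth column of $P$ alters the second column of the tableau). So the ``contribution of a height-$i$ column of $P$'' is not literally localized, and the clean form of your argument is global: before alterations the $1$-signature has one $-$ for each column with a nonempty string part (every column except the $(-)$-columns of height $1$) plus one $-$ for each $\ol{1}$; each $+$ of $P$ destroys exactly one such $-$ (turning it into a $+$ exactly when its height is $1$); and the scan-order property guarantees that every surviving $+$ lies to the right of every surviving $-$ in the reduced signature, so there is no cross-column cancellation. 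This yields the stated bound, of which your per-column table is a correct redistribution. Either computation of $\ve_1$ is at the same level of detail as the paper's own proof, so I would count this as correct, with the column-attribution point the one spot that must be phrased globally rather than locally.
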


\begin{proof}
Apply \eqref{eq:ht cond} for $i=1,2,\ldots,n$ and use Proposition \ref{prop:e1 action} to calculate 
$\ve_1$ of the $\pm$-diagram $P$.
\end{proof}

\section{Main result}

We consider the combinatorial $R$-matrix
\[
R:B^{r,k}\otimes B^{1,l}\longrightarrow B^{1,l}\otimes B^{r,k}.
\]
Let $b\ot x\in B^{r,k}\ot B^{1,l}$ be $I_0$-highest and $R(b\ot x)=x'\ot b'$. Then, $x'\ot b'$ is also
$I_0$-highest, and from Propositions \ref{prop:ht cond 1} and \ref{prop:ht cond 2} $b=\mu$ for some 
dominant integral weight $\mu$, $x'=1^l$ and there exists a $\pm$-diagram $P$ such that $b'=\Phi(P)$. 
Thus we have 
\[
R(\mu\ot x)=1^l\ot\Phi(P).
\]
For an element of $B^{1,l}$ we use both the coordinate representation and the Japanese reading word of
the corresponding one-row tableau.
Let $p_i^\ast$ ($\ast=\cdot,+,-,\mp$) be data corresponding to $P$ as in the previous section. Then our
main result is:

\begin{theorem} \label{th:main}
With the notations above we have the following formulas.
\begin{align*}
p_i^\cdot&=\left\{
\begin{array}{ll}
\xb_2-x_2-(x_1-\mu_0)_-&\text{ for }i=0,\\
x_{r+1}&\text{ for }i=r,\\
\xb_{i+2}-x_{i+2}&\text{ otherwise},
\end{array}
\right.\quad
p_i^+=\mu_i-\xb_i-x_{i+1},\\
p_i^-&=\left\{
\begin{array}{ll}
\xb_1&\text{ for }i=1,\\
x_i&\text{ otherwise},
\end{array}
\right.\quad
p_i^\mp=\left\{
\begin{array}{ll}
\min(\mu_0,x_1)&\text{ for }i=2,\\
x_{i-1}&\text{ otherwise}.
\end{array}
\right.
\end{align*}
Here $(x)_-=\min(x,0),\mu_0=k-\sum_{i>0}\mu_i$, and $p_0^\cdot=k-(\text{the first part of the outer shape of }
P)$. We also note $i\equiv r$ (mod 2).

Moreover, the value of the energy function is given by 
\[
H(\mu\ot x)=
\left\{
\begin{array}{ll}
\la_1-k-l&\text{ if $r$ is odd},\\
(\la_1-k)_+-l&\text{ if $r$ is even},
\end{array}
\right.
\]
if we normalize $H$ in such a way as $H((k^r)\ot1^l)=0$. Here $\la_1$ is the first part of the partition
corresponding to the weight of $\mu\ot x$.
\end{theorem}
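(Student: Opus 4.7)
The plan is to use the uniqueness of the combinatorial $R$-matrix as a crystal isomorphism from $B^{r,k}\ot B^{1,l}$ to $B^{1,l}\ot B^{r,k}$, together with the characterization of $I_0$-highest elements from Propositions \ref{prop:ht cond 1} and \ref{prop:ht cond 2}. The first preliminary step is to verify that the proposed diagram $P$ is a genuine $\pm$-diagram: each $p_i^\ast$ must be a nonnegative integer and the combined inner and outer shapes must be consistent. Here nonnegativity of $p_i^+=\mu_i-\xb_i-x_{i+1}$ is exactly condition (iii) of Proposition \ref{prop:ht cond 1}, and the remaining $p_i^\ast$ yield to analogous one-line checks using (i), (ii), and (iv). Once this is in place, one confirms that $1^l\ot\Phi(P)$ is itself $I_0$-highest by checking the inequality in Proposition \ref{prop:ht cond 2}, which after substitution telescopes using $\sum_i(x_i+\xb_i)=l$. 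A direct weight computation comparing column contents of $\Phi(P)$ with the coordinates of $x$ then shows $\wt(\mu\ot x)=\wt(1^l\ot\Phi(P))$, so the candidate lies in the correct isotypic component.

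For the identification step, namely showing that $R(\mu\ot x)$ is actually this candidate rather than some other $I_0$-highest vector of the same $X_n$-weight, I would follow the strategy announced in the introduction: prove a special case in Section 5 and reduce the general statement to it in Section 6. A natural choice of special case is $\mu=k\La_r$, where conditions (i)--(iv) force $\xb_i=x_{i+1}=0$ for $i<r$, cutting the data down to a manageable subset of coordinates and allowing a direct computation of $R$ from the definition \eqref{eq:sigma} of $\sigma$ together with the explicit algorithm for $\Phi$. The reduction then adjusts $\mu$ one horizontal strip (equivalently one vertical domino at a time), tracking how each $p_i^\ast$ evolves. The parity of $r$ intervenes because the $\pm$-diagram structure at a height $i$ depends on $i\bmod 2$ relative to $r$: heights $\not\equiv r\pmod 2$ carry $+$ or $-$, while heights $\equiv r\pmod 2$ carry $\cdot$ or $\mp$, so removing a vertical domino from $(k^r)$ affects the image $P$ differently in the two cases. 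The main obstacle, I expect, lies in this reduction: each adjustment must be tracked through the formulas for every $p_i^\ast$ simultaneously, with separate subcases depending on whether $\mp$-pairs are created or destroyed and on what happens at the boundary $i=r$.

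The energy function formula is then obtained from \eqref{eq:e-func} by the same reduction. Normalizing $H((k^r)\ot1^l)=0$, one computes $H(\mu\ot x)$ by following a path of $e_0$-applications connecting the two $I_0$-highest elements. Each domino-removal step contributes $+1$ to $H$ in the case where $r$ is odd, producing the $\la_1-k-l$ formula, while in the case where $r$ is even only steps whose removed dominoes affect the first row contribute, accounting for the clipping $(\la_1-k)_+$. Verification along this path, or alternatively a direct induction that uses the explicit formulas for the $p_i^\ast$ at each stage, completes the proof.
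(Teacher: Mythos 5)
Your skeleton (check the candidate is a valid $\pm$-diagram, check it is $I_0$-highest of the right weight, then pin it down via a special case plus a reduction) matches the paper's announced architecture, but the part you defer is precisely where all the content lives, and the specific scheme you propose does not obviously close. The central difficulty is that $R$ is defined only abstractly as the unique affine crystal isomorphism, so to identify $R(\mu\ot x)$ among the possibly several $I_0$-highest vectors of the same $X_n$-weight you must produce an explicit word $e_{\boldsymbol{b}}e_0^{N}$ of affine crystal operators that (a) carries $\mu\ot x$ to a strictly simpler $I_0$-highest element $\mu'\ot x'$, (b) carries the conjectured image $1^l\ot\Phi(P)$ to the conjectured image $1^l\ot\Phi(P')$ of that simpler element, and (c) is \emph{the same word} in both computations, so that $R$-equivariance transfers the identity one step down. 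Computing the $e_0$-part of such a word forces you through $\sigma=$ ($J$-lower, apply $\mathfrak{S}$ on $\pm$-diagrams, $J$-raise) on both tensor factors, and verifying (c) is a nontrivial bookkeeping identity between exponents expressed in the $x_i,\xb_i,\mu_i$ on one side and in the $p_i^\ast$ on the other (this is the content of the paper's Propositions \ref{reduction_odd}, \ref{reduction_odd2} and \ref{reduction_odd3}, where the two long words $\boldsymbol{b}_1$ and $\boldsymbol{b}_2$ are constructed independently and then shown to coincide). Your proposal never identifies this mechanism: ``tracking how each $p_i^\ast$ evolves'' under removal of a vertical domino from $\mu$ is not by itself a statement about $R$, because you have not exhibited a crystal path implementing the domino removal simultaneously on both sides. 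Moreover your base case $\mu=k\La_r$ is not small: conditions (i)--(iv) still leave $x_1,x_r,x_{r+1},\xb_r$ free, and ``direct computation of $R$ from the definition of $\sigma$'' is circular, since \eqref{eq:sigma} defines $e_0$, not $R$; one still has to connect these elements to $u_0\ot u'_0$ by explicit operator strings. (The paper instead takes the special case $x=1^l$ with $\mu$ of full width $k$, proved by an induction on the shape of $\mu$ that grows a column block by two rows at each step, and reduces general $x$ to it by stripping all of $x_2,\dots,x_{r+1},\xb_1,\dots,\xb_r$ in one pass rather than one domino at a time.)

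The energy computation has the same gap in sharper form: the assertion that ``each domino-removal step contributes $+1$ to $H$ when $r$ is odd'' is unsupported and does not match how the count actually works. $H$ changes only at applications of $e_0$ that are of type (LL) or (RR) in \eqref{eq:e-func}, so one must record, along the explicit $e_0^N$-strings of the reduction, which factor $e_0$ acts on in $B^{r,k}\ot B^{1,l}$ versus in $B^{1,l}\ot B^{r,k}$; in the paper's special case the (LL) and (RR) counts cancel exactly (giving $H=0$ there), and each reduction step contributes $x_1+x_2-l$, not $+1$. Without the explicit operator words this ledger cannot be kept, so neither the $\la_1-k-l$ formula nor the clipped $(\la_1-k)_+-l$ formula follows from what you have written.
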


Solving the formulas for $p_i^\ast$ with respect to $\mu$ and $x$, we obtain

\begin{cor}
The coordinates of the image $\mu\ot x$ of the inverse of $R$ for an element $1^l\ot\Phi(P)$ of 
$B^{1,l}\ot B^{r,k}$ are given by
\begin{align*}
\mu_i&=\left\{
\begin{array}{l}
(l-k+\sum_{i\equiv r\;(2)}(p_i^+-p_i^-))_-+p_4^\mp+p_2^++p_2^-+p_0^\cdot
\text{ if }i=2\text{ and }r\text{ is even},\\
p_{i+2}^\mp+p_i^++p_i^-+p_{i-2}^\cdot
\text{ if }0<i\le r,i\ne2\text{ and }i\equiv r\;(\text{mod }2),
\end{array}
\right.\\
x_i&=\left\{
\begin{array}{l}
(l-k+\sum_{i\equiv r\;(2)}(p_i^+-p_i^-))_++p_2^\mp
\text{ if }i=1\text{ and }r\text{ is even},\\
l-k+\sum_{i\equiv r\;(2)}(p_i^+-p_i^-)+p_1^-
\text{ if }i=1\text{ and }r\text{ is odd},\\
p_i^-\text{ if }i\ne1\text{ and }i\equiv r\;(\text{mod }2),\\
p_{i+1}^\mp\text{ if }i\ne1\text{ and }i\not\equiv r\;(\text{mod }2),
\end{array}
\right.\\
\xb_i&=p_i^-+p_{i-2}^\cdot.
\end{align*}
Here we should understand $p_{r+2}^\mp=p_r^\cdot,p_{-1}^\cdot=0$.
\end{cor}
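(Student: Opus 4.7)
The corollary is a direct algebraic inversion of Theorem \ref{th:main}: the plan is to substitute the theorem's expressions for $p_i^\cdot,p_i^+,p_i^-,p_i^\mp$ into each right-hand side and verify that the resulting sum collapses to $\mu_i$, $x_i$, or $\xb_i$. Beyond the theorem itself, only three inputs are needed: the coordinate-sum $\sum_{i\ge1}(x_i+\xb_i)=l$ (with $x_0=0$ automatic in type $B_n^{(1)}$ by Proposition \ref{prop:ht cond 1} applied at $i=n$), the definition $\mu_0=k-\sum_{i\ge1}\mu_i$, and the fact that $\mu_i=0$ unless $i\equiv r\pmod 2$, so that $\sum_{i\equiv r\,(2),\,i\ge1}\mu_i=k-\mu_0$ (and $\mu_0=0$ when $r$ is odd, since vertical dominoes can never erase a whole column of odd height).

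The formulas without a sign function are essentially bookkeeping. For $i\ne1$ with $i\equiv r\pmod2$, one has $p_i^-=x_i$ and (by the theorem applied at $j=i-2$) $p_{i-2}^\cdot=\xb_i-x_i$, so $p_i^-+p_{i-2}^\cdot=\xb_i$; the boundary $i=1$ is absorbed by the conventions $p_{-1}^\cdot=0$ and $p_1^-=\xb_1$. The identities $x_i=p_i^-$ and $x_i=p_{i+1}^\mp$ are read off directly from $p_i^-=x_i$ and $p_i^\mp=x_{i-1}$. Finally, for $0<i\le r$ with $i\ne 2$ and $i\equiv r\pmod 2$, a telescoping sum gives
\[
p_{i+2}^\mp+p_i^++p_i^-+p_{i-2}^\cdot=x_{i+1}+(\mu_i-\xb_i-x_{i+1})+x_i+(\xb_i-x_i)=\mu_i,
\]
where the top-boundary convention $p_{r+2}^\mp=p_r^\cdot=x_{r+1}$ handles $i=r$.

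The coupled, nontrivial cases are $\mu_2$ (for $r$ even) and $x_1$, both involving $S:=l-k+\sum_{i\equiv r\,(2)}(p_i^+-p_i^-)$. The first step is to evaluate $S$ in closed form: inserting $p_i^+=\mu_i-\xb_i-x_{i+1}$ and $p_i^-=x_i$ (with $p_1^-=\xb_1$), then using $\sum_{i\equiv r\,(2)}\mu_i=k-\mu_0$ together with the coordinate-sum identity and the vanishing of $x_i$ for $i\ge r+2$ and of $\xb_i$ for $i\not\equiv r\pmod 2$ from Proposition \ref{prop:ht cond 1}(i),(ii), I expect to obtain $S=x_1-\mu_0$ when $r$ is even and $S=x_1-\xb_1$ when $r$ is odd. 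Given this identity, the $r$-even formula for $\mu_2$ unfolds to
\[
(x_1-\mu_0)_-+x_3+(\mu_2-\xb_2-x_3)+x_2+\bigl(\xb_2-x_2-(x_1-\mu_0)_-\bigr)=\mu_2,
\]
the $r$-even formula for $x_1$ collapses to $(x_1-\mu_0)_++\min(\mu_0,x_1)=x_1$, and the $r$-odd formula to $S+\xb_1=x_1$. The only delicate step is the sign bookkeeping in the $r$-even case: the $-(x_1-\mu_0)_-$ buried in $p_0^\cdot$ must cancel the leading $(x_1-\mu_0)_-$ in the $\mu_2$-formula and simultaneously be sign-compatible with the $(x_1-\mu_0)_+$ in the $x_1$-formula. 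The dominance conditions $x_{i+1}+\xb_i\le\mu_i$ and $x_i\le\xb_i$ from Proposition \ref{prop:ht cond 1} keep every $p_i^\ast$ nonnegative and ensure that no hidden case distinctions intrude.
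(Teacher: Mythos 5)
Your overall strategy---substitute the formulas of Theorem \ref{th:main} into the right-hand sides of the corollary and check that everything telescopes---is exactly what the paper does (it offers nothing beyond the phrase ``solving the formulas''), and most of your verification is sound: the evaluation of $S:=l-k+\sum_{i\equiv r\,(2)}(p_i^+-p_i^-)$ as $x_1-\mu_0$ for $r$ even and $x_1-\xb_1$ for $r$ odd, and the $\mu_i$ and $x_i$ clauses built on it, all check out. The gap is in the $\xb_i$ clause. You assert $p_{i-2}^\cdot=\xb_i-x_i$ for every $i\ne1$ with $i\equiv r\pmod 2$, but for $i=2$ and $r$ even Theorem \ref{th:main} gives $p_0^\cdot=\xb_2-x_2-(x_1-\mu_0)_-$, so that $p_2^-+p_0^\cdot=\xb_2-(x_1-\mu_0)_-=\xb_2-S_-$, which equals $\xb_2$ only when $x_1\ge\mu_0$. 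You used the correct expression for $p_0^\cdot$ two lines later in your $\mu_2$ computation, so the two uses are mutually inconsistent.

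This is not a removable slip in the write-up: the identity $\xb_2=p_2^-+p_0^\cdot$ as printed genuinely fails when $x_1<\mu_0$. Take type $D_4^{(1)}$, $r=2$, $k=2$, $l=1$, $\mu=\La_2$ (so $\mu_2=1$, $\mu_0=1$) and $x$ the letter $3$ (so $x_3=1$, all other coordinates $0$; this $\mu\ot x$ is $I_0$-highest by Proposition \ref{prop:ht cond 1}). The theorem gives $p_2^\cdot=x_3=1$, $p_0^\cdot=\xb_2-x_2-(x_1-\mu_0)_-=1$, and all other $p_i^\ast=0$; the image is $1\ot\Phi(P)$ with $\Phi(P)$ the single-column tableau with entries $2,3$, and one checks directly that this is the unique $I_0$-highest element of $B^{1,1}\ot B^{2,2}$ of the correct weight, confirming the forward formulas. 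Running the corollary backwards from this $P$ returns $\xb_2=p_2^-+p_0^\cdot=1$ instead of the true value $0$ (and the resulting coordinates then sum to $2\ne l$, so the output is not even in $B^{1,1}$). A consistent inversion along your lines yields $\xb_2=p_2^-+p_0^\cdot+\bigl(l-k+\sum_{i\equiv r\,(2)}(p_i^+-p_i^-)\bigr)_-$ in the $r$ even case; the remaining clauses of the corollary survive your verification unchanged. Had you applied the theorem's formula for $p_0^\cdot$ uniformly, your own computation would have flagged this.
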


In what follows in this section we prove Theorem \ref{th:main} by assuming technical propositions in 
later sections. We give a proof
only for type $A_{2n-1}^{(2)}$, since the difference from the other cases is very small as we have seen 
in Proposition \ref{prop:to highest}. Suppose we need to apply $e_{\boldsymbol{a}}$ with such a word 
$\boldsymbol{a}$ as
\[
\boldsymbol{a}=\cdots(r+1)^\alpha(r+2)^\alpha\cdots n^\alpha(n-1)^\alpha\cdots(r+1)^\alpha\cdots
\]
for type $A_{2n-1}^{(2)}$ (see e.g. \eqref{b11}). Then for type $B_n^{(1)}$ we replace it with
\[
\boldsymbol{a}=\cdots(r+1)^\alpha(r+2)^\alpha\cdots n^{2\alpha}(n-1)^\alpha\cdots(r+1)^\alpha\cdots,
\]
and for type $D_n^{(1)}$ 
\[
\boldsymbol{a}=\cdots(r+1)^\alpha(r+2)^\alpha\cdots n^\alpha(n-2)^\alpha\cdots(r+1)^\alpha\cdots.
\]

Consider first the case when $r$ is odd. Suppose $\mu,x$ and $P$ are related as in the statement of the
theorem. We are to show 
\begin{equation} \label{to show}
R(\mu\ot x)=1^l\ot\Phi(P).
\end{equation}
By Proposition \ref{reduction_odd3} showing \eqref{to show} is reduced to the case where $x$ is of the 
form $x=\ol{3}^{\xb_3}1^{l-\xb_3}$. Applying this proposition again to this case, it is then reduced to 
the case where $x=1^l$, since there is no $2$ or $\ol{1}$ in $x$ of the previous case. 
Hence Proposition \ref{prop_special}
completes the proof of \eqref{to show}. (Notice that when $x=1^l,p_i^+=\mu_i$ for any odd $i$ and the 
other $p_i^\ast$ are all zero.) Using these propositions we can calculate $H$ as 
\begin{align*}
H(\mu\ot x)&=H(\ol{\mu}\ot\ol{3}^{x_2+\xb_1}1^{l-x_2-\xb_1})+(x_1+x_2-l)\\
&=H(\ol{\mu}\ot1^l)+(-x_2-\xb_1)+(x_1+x_2-l)\\
&=x_1-\xb_1-l=\la_1-k-l,
\end{align*}
since $\la_1=k+x_1-\xb_1$.

The case when $r$ is even can be proven similarly by using Propositions \ref{reduction_even3} and
\ref{prop_special}.

\section{Proof of a special case}

\subsection{Statement}

Let $\mu=\sum_{i=1}^mc_i\La_{j_i}$ be a dominant integral weight whose corresponding Young diagram is 
depicted as follows.
\begin{center}
\unitlength 10pt
\begin{picture}(25,11.5)(-1,0)
\put(-2.3,5){$\mu =$}
\put(0,0){\line(1,0){25}}
\put(0,0){\line(0,1){10}}
\put(0,10){\line(1,0){5}}
\put(5,10){\line(0,-1){2}}
\put(5,8){\line(1,0){5}}
\put(10,8){\line(0,-1){1}}
\multiput(10.3,7)(0.5,-0.1){20}{\circle*{0.1}}
\put(20,4){\line(0,1){1}}
\put(25,4){\line(-1,0){5}}
\put(25,0){\line(0,1){4}}
\put(2.3,4.8){$j_1$}
\put(2.5,6){\vector(0,1){4}}
\put(2.5,4){\vector(0,-1){4}}
\put(7.3,3.8){$j_2$}
\put(7.5,5){\vector(0,1){3}}
\put(7.5,3){\vector(0,-1){3}}
\put(22.3,1.8){$j_m$}
\put(22.5,3){\vector(0,1){1}}
\put(22.5,1){\vector(0,-1){1}}
\qbezier(0,10)(1,10.6)(1.8,10.7)
\qbezier(5,10)(4,10.6)(3.2,10.7)
\put(2.1,10.6){$c_1$}
\qbezier(5,8)(6,8.6)(6.8,8.7)
\qbezier(10,8)(9,8.6)(8.2,8.7)
\put(7.1,8.6){$c_2$}
\qbezier(20,4)(21,4.6)(21.8,4.7)
\qbezier(25,4)(24,4.6)(23.2,4.7)
\put(22.0,4.6){$c_m$}
\end{picture}
\end{center}
Here one can assume
$j_{i-1}-j_{i}\in 2\mathbb{Z}_{>0}$
and $c_i>0$.
We also assume that
\begin{equation}\label{s:eq:sumc_i=k}
\sum_{i=1}^mc_i=k,\qquad j_m>0.
\end{equation}
Then the claim of this section is
the following special version of the main theorem:

\begin{prop}\label{prop_special}
Let $\mu$ be as above. Then we have
\begin{align}
R(\mu\otimes 1^l)=
1^l\otimes\mu ,\qquad
H(\mu\otimes 1^l)=0.
\nonumber
\end{align}
\end{prop}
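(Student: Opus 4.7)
The plan is to prove $R(\mu\ot 1^l)=1^l\ot\mu$ and $H(\mu\ot 1^l)=0$ simultaneously by induction on the quantity $d:=(kr-\sum_i c_ij_i)/2$, which counts the number of vertical dominoes stripped from $k\La_r$ to produce $\mu$. The base case $d=0$ corresponds to $\mu=(k^r)$, the classical highest weight element $u_0$ of $B^{r,k}$; since $1^l=u_0'$ is the classical highest of $B^{1,l}$, the defining property of $R$ gives $R(u_0\ot u_0')=u_0'\ot u_0$ at once, and $H=0$ by the chosen normalization.

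For $d\geq 1$, pick $\mu'$ with $d(\mu')=d-1$ differing from $\mu$ by a single vertical domino. The inductive hypothesis gives $R(\mu'\ot 1^l)=1^l\ot\mu'$ and $H(\mu'\ot 1^l)=0$. The idea is to exhibit a word $\boldsymbol{a}$ in the affine index set $I$ such that $f_{\boldsymbol{a}}(\mu'\ot 1^l)=\mu\ot 1^l$ in $B^{r,k}\ot B^{1,l}$ and simultaneously $f_{\boldsymbol{a}}(1^l\ot\mu')=1^l\ot\mu$ in $B^{1,l}\ot B^{r,k}$. Because $R$ commutes with every $f_i$, this immediately yields $R(\mu\ot 1^l)=1^l\ot\mu$. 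The word $\boldsymbol{a}$ necessarily mixes some $f_0$'s (to cross between classical components) with $f_i$'s for $i\in I_0$ (to navigate within each component).

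To construct and analyze $\boldsymbol{a}$, I rely on two tools. For $i\in I_0$, the signature rule combined with $\ve_i(1^l)=0$ (from \eqref{eq:veps vphi}) shows that $f_i$ on $(\cdot)\ot 1^l$ acts on the left factor whenever possible; the parallel requirement for $1^l\ot(\cdot)$ is $\vp_i(1^l)\leq\ve_i$ of the right factor, which must be checked step by step at each intermediate configuration. For $f_0$, the relation $f_0=\sigma\circ f_1\circ\sigma$ of \eqref{eq:0 action} reduces the analysis to an $f_1$-action on the $\mathfrak{S}$-transformed $\pm$-diagram of the $B^{r,k}$ factor, together with the explicit coordinate action on $B^{1,l}$.

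The main obstacle is showing that each $f_0$-application in $\boldsymbol{a}$ lands in the ``LR'' or ``RL'' case of \eqref{eq:e-func}, so that the energy $H$ passes unchanged from $\mu'\ot 1^l$ to $\mu\ot 1^l$. This reduces to a delicate combinatorial check of how $\mathfrak{S}$ acts on the $\pm$-diagram of $\mu$, and of how the constraints $j_{i-1}-j_i\in 2\Z_{>0}$ and $\sum c_i=k$ force precisely the cancellations needed to keep $H$ at zero. The rigidity of the classical highest element $\mu$ should leave no room for the ``LL'' or ``RR'' contributions that would change the energy.
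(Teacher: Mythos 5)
Your high-level strategy is the same as the paper's: start from $\mu=(k^r)$, where $R((k^r)\ot 1^l)=1^l\ot(k^r)$ is immediate from the defining property of $R$ on the tensor product of classical highest weight elements, and at each inductive step exhibit one word $\boldsymbol{a}$ of affine crystal operators carrying $\mu'\ot 1^l$ to $\mu\ot 1^l$ and simultaneously $1^l\ot\mu'$ to $1^l\ot\mu$, then invoke the commutation of $R$ with all $e_i,f_i$. (The paper's induction is a descending induction on $j_m$, raising the whole bottom block of $c_m$ columns by a domino at once rather than one domino at a time; that difference is immaterial.) The problem is that your proposal stops exactly where the proof begins. The existence of \emph{some} word taking $\mu'\ot 1^l$ to $\mu\ot 1^l$ is cheap; the entire content of the induction step is to write such a word down explicitly and verify, letter by letter, that the \emph{same} word applied to $1^l\ot\mu'$ lands on $1^l\ot\mu$ rather than on some other element of the correct weight. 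This is what the words $\boldsymbol{a}_1,\boldsymbol{a}_2$ (with all their subwords) and Lemmas \ref{lem_special1}--\ref{lem_special2} of the paper accomplish, and it requires tracking $\pm$-diagrams through $\mathfrak{S}$ and splitting into cases according to whether $l<c_m$ or $l\ge c_m$. None of this appears in your proposal, so as it stands it is a plan rather than a proof.

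Second, your energy argument rests on the expectation that every application of $f_0$ (or $e_0$) can be arranged to fall into case (LR) or (RL). That is not what happens and is very likely unachievable: in the paper's computation the block $e_0^{2k-c_m+\delta}$ acts on $\mu\ot 1^l$ first on the right factor for $(l-c_m)_+$ steps and then on the left for $2k-c_m$ steps, while on $1^l\ot\mu$ it acts on the right for the first $2k-c_m$ steps and then on the left for $(l-c_m)_+$ steps. The switching points differ, so one genuinely obtains $\min\bigl(2k-c_m,(l-c_m)_+\bigr)$ occurrences of (RR) \emph{and} the same number of (LL). The correct statement --- and the one the paper proves --- is that the (LL) and (RR) counts coincide, so their contributions to $H$ in \eqref{eq:e-func} cancel; it is not that they are absent. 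Establishing this cancellation requires exactly the explicit bookkeeping of where each $e_0$ acts that your proposal defers, so the energy claim $H(\mu\ot 1^l)=0$ is also left unproved.
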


\begin{remark}
The condition (\ref{s:eq:sumc_i=k})
for $\mu\in B^{r,k}$ is necessary.
For example, in type $D^{(1)}_4$,
the image of the combinatorial $R$-matrix and the value of the energy function for 
$\Yvcentermath1\young(2,1)\otimes\young(1)
\in B^{2,2}\otimes B^{1,1}$ are
$
\Yvcentermath1
\Yboxdim12pt
\newcommand{\bartwo}{\overline{2}}
\young(1)\otimes\young(2\bartwo,12)\,$ and $-1$.
\end{remark}

We divide the proof of this proposition into three parts.
Let us define two words that will be used in the proof.
\begin{align}
\boldsymbol{a}_1=&\,
\underbrace{j_m^{c_m}(j_m+1)^{c_m}}
\underbrace{(j_m-1)^{c_m}j_m^{c_m}}
\underbrace{(j_m-2)^{c_m}(j_m-1)^{c_m}}
\cdots
\underbrace{1^{c_m}2^{c_m}\mathstrut},
\nonumber\\
\boldsymbol{a}_2=&\,
\boldsymbol{a}_{2,1}
\boldsymbol{a}_{2,2}
\boldsymbol{a}_{2,3}
\boldsymbol{a}_{2,4},
\nonumber
\end{align}
where underbraces are introduced to show a unit of repetitions, and 
$\boldsymbol{a}_{2,1},\cdots,\boldsymbol{a}_{2,4}$
are defined as follows.
Set $\delta=(l-c_m)_+$, then
$
\boldsymbol{a}_{2,1}=
\boldsymbol{a}_{2,1,m}
\boldsymbol{a}_{2,1,m-1}
\boldsymbol{a}_{2,1,m-2}
\cdots
\boldsymbol{a}_{2,1,2}
\nonumber
$
where
\begin{align}
&\boldsymbol{a}_{2,1,m}=
\underbrace{2^{2k+\delta}1^{2k}\mathstrut}
\underbrace{3^{2k+\delta}2^{2k}\mathstrut}
\cdots
\underbrace{(j_m+1)^{2k+\delta}j_m^{2k}},
\nonumber\\
&\boldsymbol{a}_{2,1,m-1}=
\underbrace{(j_m+2)^{2k-c_m+\delta}(j_m+1)^{2k-c_m}}
\underbrace{(j_m+3)^{2k-c_m+\delta}(j_m+2)^{2k-c_m}}
\cdots
\nonumber\\
&\hspace{60mm}
\cdots
\underbrace{(j_{m-1}-1)^{2k-c_m+\delta}(j_{m-1}-2)^{2k-c_m}},
\nonumber
\end{align}
and for $m-2\geq i\geq 2$,
\begin{align}
\boldsymbol{a}_{2,1,i}=&\,
\underbrace{j_{i}^{2k-\sum_{j=i}^{m}c_j+\delta}
(j_{i}-1)^{2k-\sum_{j=i}^{m}c_j}}
\underbrace{(j_{i}+1)^{2k-\sum_{j=i}^{m}c_j+\delta}
j_{i}^{2k-\sum_{j=i}^{m}c_j}}
\cdots
\nonumber\\
&\hspace{40mm}
\cdots
\underbrace{(j_{i-1}-1)^{2k-\sum_{j=i}^{m}c_j+\delta}
(j_{i-1}-2)^{2k-\sum_{j=i}^{m}c_j}}.
\nonumber
\end{align}
$\boldsymbol{a}_{2,2}$ is defined by
\begin{align}
\boldsymbol{a}_{2,2}=&\,
{j_1}^{k+\delta}(j_1+1)^{k+\delta}\cdots n^{k+\delta}(n-1)^{k+\delta}\cdots{j_1}^{k+\delta}
\nonumber\\
&\,
\cdot(j_1-1)^{2k+\delta}\cdot {j_1}^k(j_1+1)^k\cdots n^k(n-1)^k\cdots{j_1}^k
\nonumber
\end{align}
and $\boldsymbol{a}_{2,3}=
\boldsymbol{a}_{2,3,1}
\cdots
\boldsymbol{a}_{2,3,m-2}
\boldsymbol{a}_{2,3,m-1}$
where for $1\leq i\leq m-2$,
\begin{align*}
\boldsymbol{a}_{2,3,i}=&\,
\underbrace{(j_i-2)^{k-\sum_{j=1}^ic_j+\delta}
(j_i-1)^{k-\sum_{j=1}^ic_j}}
\underbrace{(j_i-3)^{k-\sum_{j=1}^ic_j+\delta}
(j_i-2)^{k-\sum_{j=1}^ic_j}}\cdots\\
&\hspace{60mm}
\cdots
\underbrace{(j_{i+1}-1)^{k-\sum_{j=1}^ic_j+\delta}
j_{i+1}^{k-\sum_{j=1}^ic_j}}
\end{align*}
and $\boldsymbol{a}_{2,3,m-1}$ is
\begin{align*}
&\underbrace{(j_{m-1}-2)^{k-\sum_{j=1}^{m-1}c_j+\delta}
(j_{m-1}-1)^{k-\sum_{j=1}^{m-1}c_j}}
\underbrace{(j_{m-1}-3)^{k-\sum_{j=1}^{m-1}c_j+\delta}
(j_{m-1}-2)^{k-\sum_{j=1}^{m-1}c_j}}
\\
&\hspace{50mm}\cdots\cdots
\underbrace{(j_{m}+1)^{k-\sum_{j=1}^{m-1}c_j+\delta}
(j_{m}+2)^{k-\sum_{j=1}^{m-1}c_j}}.
\end{align*}
Finally,
$
\boldsymbol{a}_{2,4}=
j_m^\delta
(j_m-1)^\delta\cdots
2^\delta
1^\delta.
$

In the process of proof, we use a dominant integral weight
$\bar{\mu}$ given by $\bar{\mu}=\sum_{i=1}^{m-1}c_i\La_{j_i}+c_m\La_{j_m+2}$.
We assume that $c_m$ is even.
The proof for odd $c_m$ is similar.
During the proof, we often identify a KN tableau with its 
Japanese reading word.

\subsection{Proof: Part 1}
The goal of this subsection is the following lemma:

\begin{lemma}\label{lem_special1}
$
e_{\boldsymbol{a}_2}
e_0^{2k-c_m+\delta}
f_{\boldsymbol{a}_1}
(\mu\otimes 1^l)
=\bar{\mu}\otimes 1^l.
$
\end{lemma}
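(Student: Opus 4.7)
The plan is to verify the identity by a direct, block-by-block signature-rule computation, producing an explicit closed form for the intermediate element after each of the three operators $f_{\boldsymbol{a}_1}$, $e_0^{2k-c_m+\delta}$, $e_{\boldsymbol{a}_2}$. The words $\boldsymbol{a}_1$ and $\boldsymbol{a}_2$ are designed so that each underbraced block $j^\alpha(j\pm1)^\beta$ targets one specific column of the working tableau: the chosen exponent matches $\varphi_j$ (or $\varepsilon_j$) of the current element, so the block acts maximally and without cancellation against signs contributed by the $B^{1,l}$ factor. The task thus reduces to exhibiting, before each block, the closed form and the $j$-signature of the current element, so that the application of the block is locally mechanical.

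For Part~1, since $\mu$ is the highest-weight tableau and $1^l$ contributes only $+$'s to the $1$-signature and nothing to the $j$-signature for $j\geq 2$, the leading block $j_m^{c_m}(j_m+1)^{c_m}$ of $\boldsymbol{a}_1$ lowers letters only inside the $c_m$ rightmost columns of $\mu$ (those of height $j_m$), and each subsequent block $(j_m-i)^{c_m}(j_m-i+1)^{c_m}$ propagates the alteration one row up along the staircase; the outcome is an explicit tableau $T$ tensored with $1^l$ that can be written down by inspection. For Part~2, the operator $e_0=\sigma\circ e_1\circ\sigma$ acts via \eqref{eq:0 action}, and the exponent $2k-c_m+\delta$ is tuned so that $e_0^{2k-c_m+\delta}$ migrates precisely the right number of barred letters between the two factors, producing the unique $\{3,\ldots,n\}$-highest configuration that $e_{\boldsymbol{a}_2}$ can lift. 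For Part~3, I break $e_{\boldsymbol{a}_2}$ into its four sub-blocks: $\boldsymbol{a}_{2,1}$ rebuilds rows $2,\ldots,j_1$; $\boldsymbol{a}_{2,2}$ handles the top portion near row $n$; $\boldsymbol{a}_{2,3}$ descends back to row $j_m+2$; and $\boldsymbol{a}_{2,4}$ clears the remaining modified entries on the first factor, yielding $\bar{\mu}\otimes 1^l$.

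The main obstacle will be combinatorial bookkeeping, especially around the split $\delta=(l-c_m)_+$: when $l\geq c_m$ the $1$'s in $1^l$ spill over into the $1$-signature and force $\delta$ copies of various operators to be absorbed by the $B^{1,l}$ factor rather than by $B^{r,k}$, whereas when $l<c_m$ this interaction is absent and several sub-blocks degenerate. A clean organization is by induction on $m$, the number of distinct column heights of $\mu$. The base case $m=1$ is a short direct computation; the inductive step peels off the innermost blocks of $\boldsymbol{a}_1$ together with $\boldsymbol{a}_{2,1,m}$ and $\boldsymbol{a}_{2,3,m-1}$, reducing to the same statement with $\mu$ replaced by a shape of smaller depth, while an analogous argument (also applicable when $c_m$ is odd) handles the $c_m$-parity branch. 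The heart of the argument is simply verifying that the signature rule does isolate each block, so that the exponents prescribed in $\boldsymbol{a}_1,\boldsymbol{a}_2$ always equal the current $\varphi_j$ or $\varepsilon_j$; once this invariant is maintained, the lemma follows.
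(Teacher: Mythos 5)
There is a genuine gap at the $e_0$ step. Your plan treats all three operators as amenable to ``block-by-block signature computation,'' but the signature rule only computes $e_i,f_i$ for $i\in I_0$; on $B^{r,k}$ with $r\ge 2$ the $0$-arrows are defined by \eqref{eq:0 action} through the involution $\sigma$ of \eqref{eq:sigma}, i.e.\ one must pass to a $J$-highest element, convert to a $\pm$-diagram via $\Phi^{-1}$, apply $\mathfrak{S}$, and come back. Saying that the exponent $2k-c_m+\delta$ is ``tuned so that $e_0^{2k-c_m+\delta}$ migrates precisely the right number of barred letters, producing the unique $\{3,\ldots,n\}$-highest configuration that $e_{\boldsymbol{a}_2}$ can lift'' assumes exactly what has to be proved: to know how many of the $2k-c_m+\delta$ applications of $e_0$ land on each tensor factor you need $\ve_0$ and $\vp_0$ of $f_{\boldsymbol{a}_1}(\mu)$, and these are only accessible by carrying out the $\sigma$-computation explicitly (the paper does this with an auxiliary word $\boldsymbol{a}_3$ and the $\mathfrak{S}$-image of the resulting $\pm$-diagram, obtaining $\ve_0=2k-c_m$ and $\vp_0=c_m$, whence the $0$-signature $-^{2k-c_m}+^{c_m}-^{l}$ and the split of exactly $\delta=(l-c_m)_+$ applications onto the $B^{1,l}$ factor). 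Likewise the actual value of $e_0^{2k-c_m}$ on the first factor is obtained by conjugating $e_1^{2k-c_m}$ back through $\sigma$ with a second auxiliary word; there is no uniqueness principle that lets you skip this, since many $\{3,\ldots,n\}$-highest elements of the correct weight exist.

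A second, smaller problem is the proposed induction on $m$. The involution $\mathfrak{S}$ rearranges signs over all columns of the inner shape simultaneously, and the exponents in $\boldsymbol{a}_{2,1,i}$ and $\boldsymbol{a}_{2,3,i}$ involve the partial sums $\sum_{j\ge i}c_j$ and $\sum_{j\le i}c_j$; so peeling off the blocks belonging to the height-$j_m$ columns does not reduce the computation to the same statement for a shape of smaller depth --- the remaining blocks still see $c_m$ through these sums and through the case distinction on which interval $\sum_{j=1}^{i-1}2c_j<c_m\le\sum_{j=1}^{i}2c_j$ contains $c_m$. The computation is genuinely global in $(c_1,\ldots,c_m)$, which is why the paper carries it out in closed form rather than by induction on the number of column heights.
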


This is a direct consequence of the following three sublemmas.

\begin{lemma}
\begin{align}
f_{\boldsymbol{a}_1}(\mu\otimes 1^l)=
(34\cdots (j_{m}+2))^{c_m}
(12\cdots j_{m-1})^{c_{m-1}}
\cdots
(12\cdots j_{2})^{c_{2}}
(12\cdots j_1)^{c_1}
\otimes 1^l.
\nonumber
\end{align}
\end{lemma}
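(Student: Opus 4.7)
The plan is to apply the operators in $f_{\boldsymbol{a}_1}$ one letter at a time, using the Japanese reading word and the signature rule to identify which box is modified at each step. The word $\boldsymbol{a}_1$ decomposes naturally into $j_m$ consecutive pairs
\[
(j_m^{c_m},(j_m+1)^{c_m}),\;((j_m-1)^{c_m},j_m^{c_m}),\;\ldots,\;(1^{c_m},2^{c_m}),
\]
the $k$-th of which applies $f_{j_m-k+1}^{c_m}$ followed by $f_{j_m-k+2}^{c_m}$. I would prove by induction on $k$ that after the first $k$ pairs have been applied, each of the $c_m$ rightmost columns of $\mu$ has become
\[
1,2,\ldots,j_m-k,\;j_m+3-k,\;j_m+4-k,\;\ldots,\;j_m+2
\]
(entries listed from bottom to top), while every other column of $\mu$ and the tail $1^l$ are unchanged. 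Setting $k=j_m$ gives the claimed expression, since the lower segment becomes empty and the upper one becomes $3,4,\ldots,j_m+2$.

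The inductive step is a signature calculation at each of the two sub-steps of the pair. By the induction hypothesis, each of the $c_m$ modified columns currently lacks the letter $i+1$ when $f_i$ is to be applied (for $i\in\{j_m-k+1,j_m-k+2\}$), so it contributes a single $+$ to the $i$-signature. Each unmodified column of $\mu$, having height strictly greater than $j_m$, still contains both $i$ and $i+1$ consecutively in its bottom-to-top reading, hence contributes a $(+,-)$ that cancels under reduction. The factor $1^l$ is invisible to $f_i$ for $i\geq 2$ and, for $i=1$, contributes $+^l$ at the far right of the signature. Consequently the reduced $i$-signature is $+^{c_m}$ (or $+^{c_m+l}$ when $i=1$), and $f_i^{c_m}$ raises the letter $i$ to $i+1$ in each of the $c_m$ rightmost columns, as required.

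The one place requiring real attention is the final pair $(f_1^{c_m},f_2^{c_m})$, where $1^l$ begins to interact with $f_1$. After $j$ successive applications of $f_1$, the $j$ already-modified columns each hold $2$ at the bottom and contribute $-$ rather than $+$; once the internal $(+,-)$ pairs from the taller columns of $\mu$ have cancelled, the signature reads $-^j\,+^{c_m-j+l}$ and admits no further $+-$ reduction. The leftmost $+$ therefore still sits inside the block of rightmost $c_m$ columns, so $1^l$ is never touched. This is the only subtlety; everything else reduces to routine signature bookkeeping.
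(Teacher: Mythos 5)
Your argument is correct: the induction on the number of pairs in $\boldsymbol{a}_1$, with the signature-rule bookkeeping showing that each $f_i$ acts only on the $c_m$ rightmost columns (and the check that $1^l$ is never reached during $f_1^{c_m}$), is exactly the routine computation needed here. The paper states this sublemma without proof, so your write-up simply supplies the details it leaves implicit; there is no divergence in method.
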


\begin{lemma}\label{lem:special1}
\begin{align}
e_0^{2k-c_m+\delta}
f_{\boldsymbol{a}_1}(\mu\otimes 1^l)
=&(34\cdots (j_m+2)\overline{2}\,\overline{1})^{c_m}
(34\cdots j_{m-1}\overline{2}\,\overline{1})^{c_{m-1}}
\cdots
\nonumber\\
&\cdots
(34\cdots j_2\overline{2}\,\overline{1})^{c_2}
(34\cdots j_1\overline{2}\,\overline{1})^{c_1}
\otimes
\overline{2}\mathstrut^{\delta}
1^{l-\delta}.
\nonumber
\end{align}
\end{lemma}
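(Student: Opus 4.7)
From the preceding lemma, $f_{\boldsymbol{a}_1}(\mu\ot 1^l)=b\ot 1^l$, where $b=(34\cdots(j_m+2))^{c_m}(12\cdots j_{m-1})^{c_{m-1}}\cdots(12\cdots j_1)^{c_1}$. It remains to apply $e_0^{2k-c_m+\delta}$ and verify the stated form on each factor. Using the coordinate formulas in~\eqref{eq:veps vphi}, one has $\ve_0(\ol{2}^a 1^{l-a})=l-a$ and $\vp_0(\ol{2}^a 1^{l-a})=a$ for $0\le a\le l$, so each $e_0$ hitting the right factor simply converts one $1$ into an $\ol{2}$.

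The strategy is to iterate the tensor product rule for $e_0$: at each step, the application hits the left factor precisely when $\vp_0$ of the current left factor is at least $\ve_0$ of the current right factor. The claim I aim to establish is that exactly $\delta=(l-c_m)_+$ of the applications hit the right factor (producing $\ol{2}^\delta 1^{l-\delta}$), while the remaining $2k-c_m$ act on $b$, transforming each short column $34\cdots(j_m+2)$ into $34\cdots(j_m+2)\ol{2}\ol{1}$ and each long column $12\cdots j_i$ (for $i<m$) into $34\cdots j_i\ol{2}\ol{1}$.

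To compute the action of $e_0$ on the left factor I exploit $e_0=\sigma\circ e_1\circ\sigma$, giving $\vp_0=\vp_1\circ\sigma$. Since $b$ is not $J$-highest (the first $c_m$ columns begin with $3$), I apply~\eqref{eq:sigma}: choose a word $\boldsymbol{a}$ such that $e_{\boldsymbol{a}}b$ is $J$-highest — a natural choice lowers each $34\cdots(j_m+2)$ column down to $12\cdots j_m$ via a sequence of $e_i$'s — read off the corresponding $\pm$-diagram via $\Phi^{-1}$, apply $\mathfrak{S}$ as described in Section~\ref{subsec:pm diag}, and recover $\sigma(b)$ via $\Phi$ followed by $f_{\mathrm{Rev}(\boldsymbol{a})}$. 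From $\sigma(b)$ I read off $\vp_0(b)$ by the signature rule on its Japanese reading word, and analogously $\vp_0$ of the intermediate left factors; comparing with the current $\ve_0$ of the right factor at each step confirms the split claimed above.

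The main obstacle is the explicit tracking of $\sigma$, in particular verifying that $e_0^{2k-c_m}$ applied to $b$ alone yields the claimed $b'$. Since each column of $b$ is an interval of consecutive integers, however, the associated $\pm$-diagram has a uniform form and $\mathfrak{S}$ acts predictably; $\sigma(b)$ is another tableau of the same shape whose structure makes $e_1^{2k-c_m}\sigma(b)$ computable via the signature rule, and reapplying $\sigma$ returns $b'$. The weight bookkeeping — each of the $c_m$ short columns consumes one $e_0$ to gain $\ol{2}\ol{1}$ on top, each of the $k-c_m$ long columns consumes two $e_0$'s to swap the bottom $12$ for $\ol{2}\ol{1}$ on top, totalling $c_m+2(k-c_m)=2k-c_m$ — provides a useful cross-check on the count.
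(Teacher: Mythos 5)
Your outline follows the paper's strategy (conjugate $e_0$ through $\sigma$ via $\pm$-diagrams, then split the $e_0$ applications between the two tensor factors by the signature rule), your formulas for $\ve_0,\vp_0$ of $\ol{2}^a1^{l-a}$ are correct, and the weight count $c_m+2(k-c_m)=2k-c_m$ is a valid cross-check. The split itself is also right: since $\ve_0(f_{\boldsymbol{a}_1}(\mu))=2k-c_m$ and $\vp_0(f_{\boldsymbol{a}_1}(\mu))=c_m$, the $0$-signature of the tensor product is $-^{2k-c_m}+^{c_m}-^l$, which reduces to give exactly $\delta=(l-c_m)_+$ applications on the right factor followed by $2k-c_m$ on the left. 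But these values of $\ve_0,\vp_0$, and above all the explicit tableau $e_0^{2k-c_m}(f_{\boldsymbol{a}_1}(\mu))$, are precisely what the lemma asserts and what your proposal never computes: you defer the entire content to ``$\sigma(b)$ is computable'' and ``reapplying $\sigma$ returns $b'$.'' The paper's proof consists of carrying this out: reducing to the $J$-highest element via $\boldsymbol{a}_3$, reading off the $\pm$-diagram, applying $\mathfrak{S}$ (which requires locating $c_m$ among the partial sums $\sum_{j\le i}2c_j$ and produces the non-uniform expression \eqref{eq:w_3}), applying $f_{\mathrm{Rev}(\boldsymbol{a}_3)}$ and $e_1^{2k-c_m}$, and then running a \emph{second} full $\sigma$-conjugation with a different word $\boldsymbol{a}_4$ to convert $e_1^{2k-c_m}\sigma(b)$ back into $e_0^{2k-c_m}(b)$. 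None of this is routine bookkeeping, and without it the lemma is not proved.

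Two of your structural assertions are also wrong and would derail the computation if followed literally. First, the word $\boldsymbol{a}$ in \eqref{eq:sigma} must consist of letters in $J=\{2,\dots,n\}$ (that is the only reason the conjugation formula is valid, since $\sigma$ commutes only with $e_i,f_i$ for $i\in J$); hence the short columns $34\cdots(j_m+2)$ can only be lowered to $23\cdots(j_m+1)$, not to $12\cdots j_m$ as you claim — reaching the latter requires $e_1$. Second, $\sigma(b)$ is \emph{not} a tableau of the same shape as $b$: $\mathfrak{S}$ preserves the inner shape but changes the outer shape, and indeed in the present case every column of $\sigma(f_{\boldsymbol{a}_1}(\mu))$ acquires an extra barred entry on top, so it lies in a different classical component. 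The ``uniform form'' you invoke does not hold: after $\mathfrak{S}$ the columns split into several families depending on where $c_m$ falls among the $\sum 2c_j$, which is exactly the case analysis the paper has to handle.
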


\begin{proof}
To begin with we apply $e_0$ on $f_{\boldsymbol{a}_1}(\mu)$ for maximal times.
Define a word $\boldsymbol{a}_3$ by
$\boldsymbol{a}_3=2^{c_m}3^{c_m}\cdots (j_m+1)^{c_m}.$
Then the $J$-highest element of
$f_{\boldsymbol{a}_1}(\mu)$ is
$$e_{\boldsymbol{a}_3}(f_{\boldsymbol{a}_1}(\mu))=
(23\cdots (j_m+1))^{c_m}
(12\cdots j_{m-1})^{c_{m-1}}
\cdots
(12\cdots j_{2})^{c_{2}}
(12\cdots j_1)^{c_1}.$$
By the map $\Phi^{-1}$ we get the
corresponding $\pm$-diagram and $\mathfrak{S}$ acts on it as follows:
\begin{center}
\unitlength 10pt
\begin{picture}(31.2,8)
\multiput(0,0)(18,0){2}{
\put(0,0){\line(1,0){13.2}}
\put(0,0){\line(0,1){8}}
\put(0,8){\line(1,0){3.3}}
\put(3.3,8){\line(0,-1){2}}
\put(3.3,6){\line(1,0){3.3}}
\put(6.6,6){\line(0,-1){2}}
\put(6.6,4){\line(1,0){3.3}}
\put(13.2,2){\line(0,-1){2}}
}
\put(9.9,4){\line(0,-1){2}}
\put(9.9,2){\line(1,0){3.3}}
\put(27.9,4){\line(1,0){3.3}}
\put(31.2,4){\line(0,-1){2}}
\put(0.15,7.2){$+\cdots +$}
\put(3.45,5.2){$+\cdots +$}
\put(6.75,3.2){$+\cdots +$}
\put(14,3){$\xrightarrow{\quad\mathfrak{S}\quad}$}
\put(18.15,7.2){$-\cdots -$}
\put(21.35,5.2){$-\cdots -$}
\put(24.65,3.2){$-\cdots -$}
\put(28.1,3.2){$-\cdots -$}
\put(28.1,2.2){$+\cdots +$}
\end{picture}
\end{center}
Note that there are $c_m$ $+$'s at height $(j_m+1)$
of the right $\pm$-diagram.
Assume that $c_m$ satisfies
$\sum_{j=1}^{i-1}2c_j<c_m\leq
\sum_{j=1}^{i}2c_j$.
Then $\Phi\circ\mathfrak{S}\circ\Phi^{-1}
\circ e_{\boldsymbol{a}_3}\circ f_{\boldsymbol{a}_1}(\mu)$ is
\begin{align}
&(23\cdots (j_m+2)\overline{1})^{c_{m}}
\label{eq:w_3}\\
&(23\cdots (j_m+2)(j_m+3)(j_m+4)\cdots
j_{m-1}\overline{1})^{c_{m-1}}
\cdots\cdots
\nonumber\\
&(23\cdots (j_m+2)(j_m+3)(j_m+4)\cdots
j_{i-1}\overline{1})^{c_{i-1}}
\nonumber\\
&(23\cdots (j_m+2)(j_m+3)(j_m+4)\cdots
j_i\overline{1})^{\sum_{j=1}^ic_j-c_m/2}
\nonumber\\
&(12\cdots (j_m+1)(j_m+3)(j_m+4)\cdots
j_i\overline{(j_m+2)})^{c_m/2-\sum_{j=1}^{i-1}c_j}
\nonumber\\
&(12\cdots (j_m+1)(j_m+3)(j_m+4)\cdots
j_{i-1}\overline{(j_m+2)})^{c_{i-1}}\cdots\cdots
\nonumber\\
&(12\cdots (j_m+1)(j_m+3)(j_m+4)\cdots
j_1\overline{(j_m+2)})^{c_1}
\nonumber
\end{align}
and $f_{\mathrm{Rev}(\boldsymbol{a}_3)}\circ
\Phi\circ\mathfrak{S}\circ\Phi^{-1}
\circ e_{\boldsymbol{a}_3}\circ f_{\boldsymbol{a}_1}(\mu)$ is
\begin{align}
&
(234\cdots (j_m+2)\overline{1})^{c_m}
(234\cdots j_{m-1}\overline{1})^{c_{m-1}}
\cdots
(234\cdots j_{i-1}\overline{1})^{c_i}
\nonumber\\
&
(234\cdots j_i\overline{1})^{\sum_{j=1}^ic_j-c_m/2}
(134\cdots j_i\overline{2})^{c_m/2-\sum_{j=1}^{i-1}c_j}
\nonumber\\
&
(134\cdots j_{i+1}\overline{2})^{c_{i+1}}
\cdots
(134\cdots j_2\overline{2})^{c_2}
(134\cdots j_1\overline{2})^{c_1}.
\nonumber
\end{align}
{}From this expression, we get $\varepsilon_0(f_{\boldsymbol{a}_1}(\mu))=2k-c_m,
\varphi_0(f_{\boldsymbol{a}_1}(\mu))=c_m$.
Applying
$e_1^{\varepsilon_0(f_{\boldsymbol{a}_1}(\mu))}$
we get
\begin{align}
w_1:=&\, e_1^{2k-c_m}\circ
f_{\mathrm{Rev}(\boldsymbol{a}_3)}\circ
\Phi\circ\mathfrak{S}\circ\Phi^{-1}
\circ e_{\boldsymbol{a}_3}\circ f_{\boldsymbol{a}_1}(\mu)
\nonumber\\
=&\, (134\cdots (j_m+2)\overline{2})^{c_m}
(134\cdots j_{m-1}\overline{2})^{c_{m-1}}
\cdots
(134\cdots j_2\overline{2})^{c_2}
(134\cdots j_1\overline{2})^{c_1}.
\nonumber
\end{align}

To convert the action of $e_1^{2k-c_m}$ into that of $e_0^{2k-c_m}$,
we need to define the words
$\boldsymbol{a}_{4}=\boldsymbol{a}_{4,1}
\boldsymbol{a}_{4,2}\boldsymbol{a}_{4,3}$
as follows.
$
\boldsymbol{a}_{4,1}=
\boldsymbol{a}_{4,1,m+1}
\boldsymbol{a}_{4,1,m}
\boldsymbol{a}_{4,1,m-1}\cdots
\boldsymbol{a}_{4,1,2}
$
where the subwords $\boldsymbol{a}_{4,1,i}$ are
\begin{align*}
&\boldsymbol{a}_{4,1,m+1}=
2^{2k}3^{2k}\cdots (j_m+1)^{2k},\\
&\boldsymbol{a}_{4,1,m}=
(j_m+2)^{2k-c_m}(j_m+3)^{2k-c_m}\cdots
(j_{m-1}-1)^{2k-c_m},\\
&\boldsymbol{a}_{4,1,i}=
j_{i}^{2k-\sum_{j=i}^mc_j}
(j_{i}+1)^{2k-\sum_{j=i}^mc_j}\cdots
(j_{i-1}-1)^{2k-\sum_{j=i}^mc_j}
\,\, (m-1\geq i\geq 2).
\end{align*}
Define $\boldsymbol{a}_{4,2}=
j_1^k (j_1+1)^k\cdots
n^k(n-1)^k\cdots
(j_1+1)^k j_1^k$
and $\boldsymbol{a}_{4,3}=
\boldsymbol{a}_{4,3,1}
\cdots
\boldsymbol{a}_{4,3,m-1}$
where the subwords $\boldsymbol{a}_{4,3,i}$ are
\begin{align*}
&\boldsymbol{a}_{4,3,i}=
(j_i-1)^{k-\sum_{j=1}^ic_j}
(j_i-2)^{k-\sum_{j=1}^ic_j}\cdots
j_{i+1}^{k-\sum_{j=1}^ic_j},
\quad (2\leq i\leq m-2),\\
&\boldsymbol{a}_{4,3,m-1}=
(j_{m-1}-1)^{k-\sum_{j=1}^{m-1}c_j}
(j_{m-1}-2)^{k-\sum_{j=1}^{m-1}c_j}\cdots
(j_{m}+2)^{k-\sum_{j=1}^{m-1}c_j}.
\end{align*}
Then, starting from $e_{\boldsymbol{a}_{4,1}}(w_1)$,
one calculates
\begin{align*}
&
(123\cdots (j_m+1)\overline{j_1})^{c_{m}}
(123\cdots (j_{m-1}-1)\overline{j_{1}})^{c_{m-1}}
\cdots
(123\cdots (j_1-1)\overline{j_1})^{c_1}\\
\xrightarrow{e_{\boldsymbol{a}_{4,2}}}
&
(123\cdots (j_m+1){j_1})^{c_{m}}
(123\cdots (j_{m-1}-1){j_{1}})^{c_{m-1}}
\cdots
(123\cdots (j_1-1){j_1})^{c_1}\\
\xrightarrow{e_{\boldsymbol{a}_{4,3}}}
&
(12\cdots (j_m+2))^{c_m}
(12\cdots j_{m-1})^{c_{m-1}}
\cdots
(12\cdots j_1)^{c_1}.
\end{align*}
Here $b\xrightarrow{e_{\boldsymbol{a}}}b'$ means $e_{\boldsymbol{a}}b=b'$.
With $\Phi^{-1}$, this corresponds to the following $\pm$-diagram and 
$\mathfrak{S}$ acts on it as follows:
\begin{center}
\unitlength 10pt
\begin{picture}(25,8)
\multiput(0,0)(15,0){2}{
\put(0,0){\line(1,0){9.9}}
\put(0,0){\line(0,1){7}}
\put(0,7){\line(1,0){3.3}}
\put(3.3,7){\line(0,-1){2}}
\put(3.3,5){\line(1,0){3.3}}
\put(6.6,5){\line(0,-1){2}}
\put(6.6,3){\line(1,0){3.3}}
\put(9.9,3){\line(0,-1){3}}
}
\put(0.1,6.3){$+\cdots +$}
\put(3.4,4.3){$+\cdots +$}
\put(6.7,2.3){$+\cdots +$}
\put(10.9,3){$\xrightarrow{\quad\mathfrak{S}\quad}$}
\put(15.1,6.3){$-\cdots -$}
\put(18.4,4.3){$-\cdots -$}
\put(21.7,2.3){$-\cdots -$}
\end{picture}
\end{center}
Thus, starting from
$\Phi\circ\mathfrak{S}\circ\Phi^{-1}\circ
e_{\boldsymbol{a}_{4}}(w_1)$, we calculate
\begin{align*}
&
(23\cdots (j_m+2)\overline{1})^{c_m}
(23\cdots j_{m-1}\overline{1})^{c_{m-1}}
\cdots
(23\cdots j_1\overline{1})^{c_1}
\\
\xrightarrow{f_{\mathrm{Rev}(\boldsymbol{a}_{4,3})}}
&
(23\cdots (j_m+1)j_1\overline{1})^{c_m}
(23\cdots (j_{m-1}-1)j_1\overline{1})^{c_{m-1}}
\cdots
\\
&\hspace{50mm}\cdots
(23\cdots (j_2-1)j_1\overline{1})^{c_2}
(23\cdots (j_1-1)j_1\overline{1})^{c_1}
\\
\xrightarrow{f_{\mathrm{Rev}(\boldsymbol{a}_{4,2})}}
&
(23\cdots (j_m+1)\overline{j_1}\,\overline{1})^{c_m}
(23\cdots (j_{m-1}-1)\overline{j_1}\,\overline{1})^{c_{m-1}}
\cdots\\
&\hspace{50mm}\cdots
(23\cdots (j_2-1)\overline{j_1}\,\overline{1})^{c_2}
(23\cdots (j_1-1)\overline{j_1}\,\overline{1})^{c_1}
\\
\xrightarrow{f_{\mathrm{Rev}(\boldsymbol{a}_{4,1})}}
&
(34\cdots (j_m+2)\overline{2}\,\overline{1})^{c_m}
(34\cdots j_{m-1}\overline{2}\,\overline{1})^{c_{m-1}}
\cdots
(34\cdots j_2\overline{2}\,\overline{1})^{c_2}
(34\cdots j_1\overline{2}\,\overline{1})^{c_1},
\end{align*}
where the final formula gives
$e_0^{2k-c_m}(f_{\boldsymbol{a}_{1}}(\mu))$.

{}From
$\varepsilon_0(f_{\boldsymbol{a}_{1}}(\mu))$ and
$\varphi_0(f_{\boldsymbol{a}_{1}}(\mu))$,
we see that the 0-signature of
$f_{\boldsymbol{a}_1}(\mu\otimes 1^l)$
is $-^{2k-c_m}+^{c_m}-^{l}$.
Therefore we get
\begin{align}
e_0^{2k-c_m+(l-c_m)_+}
f_{\boldsymbol{a}_1}(\mu\otimes 1^l)
=&(34\cdots (j_m+2)\overline{2}\,\overline{1})^{c_m}
(34\cdots j_{m-1}\overline{2}\,\overline{1})^{c_{m-1}}
\cdots
\nonumber\\
&\cdots
(34\cdots j_2\overline{2}\,\overline{1})^{c_2}
(34\cdots j_1\overline{2}\,\overline{1})^{c_1}
\otimes
\overline{2}\mathstrut^{(l-c_m)_+}1^{l-(l-c_m)_+},
\nonumber
\end{align}
which gives the desired expression.
\end{proof}

\begin{lemma}\label{lem:special2}
Starting from
$e_{\boldsymbol{a}_{2,1}}
e_0^{2k-c_m+\delta}
f_{\boldsymbol{a}_1}(\mu\otimes 1^l)$, we have
\begin{align*}
&
(123\cdots j_m\overline{j_1}\,\overline{(j_1-1)})^{c_m}
(123\cdots (j_{m-1}-2)\overline{j_1}\,\overline{(j_1-1)})^{c_{m-1}}
\cdots
\\
&
(123\cdots (j_2-2)\overline{j_1}\,\overline{(j_1-1)})^{c_2}
(123\cdots (j_1-2)\overline{j_1}\,\overline{(j_1-1)})^{c_1}
\otimes
\overline{j_1}\mathstrut^{\delta}
1^{l-\delta}
\\
\xrightarrow{e_{\boldsymbol{a}_{2,2}}}
&
(123\cdots j_m(j_1-1)j_1)^{c_m}
(123\cdots (j_{m-1}-2)(j_1-1)j_1)^{c_{m-1}}
\cdots
\\
&
(123\cdots (j_2-2)(j_1-1)j_1)^{c_2}
(123\cdots (j_1-2)(j_1-1)j_1)^{c_1}
\otimes
(j_1-1)^{\delta}
1^{l-\delta}
\\
\xrightarrow{e_{\boldsymbol{a}_{2,3}}}
&
(123\cdots j_m(j_m+1)(j_m+2))^{c_m}
(123\cdots (j_{m-1}-2)(j_{m-1}-1)j_{m-1})^{c_{m-1}}
\cdots
\\
&
(123\cdots (j_2-2)(j_2-1)j_2)^{c_2}
(123\cdots (j_1-2)(j_1-1)j_1)^{c_1}
\otimes
(j_m+1)^{\delta}
1^{l-\delta}
\\
\xrightarrow{e_{\boldsymbol{a}_{2,4}}}
&
(123\cdots j_m(j_m+1)(j_m+2))^{c_m}
(123\cdots (j_{m-1}-2)(j_{m-1}-1)j_{m-1})^{c_{m-1}}
\cdots
\\
&
(123\cdots (j_2-2)(j_2-1)j_2)^{c_2}
(123\cdots (j_1-2)(j_1-1)j_1)^{c_1}
\otimes
1^l,
\end{align*}
where the final expression is equal to $\bar{\mu}\otimes 1^l$.
\end{lemma}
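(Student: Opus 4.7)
The plan is to verify each of the four claimed transitions $e_{\boldsymbol{a}_{2,j}}$ for $j=1,2,3,4$ in turn by direct application of the tensor product signature rule from Section~\ref{subsec:crystals}. At every step the argument is structurally parallel to the proof of Proposition~\ref{prop:to highest}: compute the current $p$-signature of the tensor product element, reduce it, verify that the exponent of $p$ prescribed by the word matches exactly the number of remaining minus signs, and conclude that $e_p^{\text{exp}}$ acts maximally and produces the next claimed pair. The starting point is given by Lemma~\ref{lem:special1}.

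For $\boldsymbol{a}_{2,1}$, I proceed by induction on the subword structure $\boldsymbol{a}_{2,1,m}\boldsymbol{a}_{2,1,m-1}\cdots\boldsymbol{a}_{2,1,2}$. Each $\boldsymbol{a}_{2,1,i}$ is a concatenation of pair-pushes $p^{a+\delta}(p-1)^a$ which raise a layer of bar-letters by one in both the main tableau and in the right factor $B^{1,l}$. The exponent $2k-\sum_{j=i}^m c_j+\delta$ splits naturally: the $2k-\sum_{j=i}^m c_j$ minus signs come from the main tableau (one per column of height at least $j_i$ containing the relevant bar-letter), while the extra $\delta$ comes from the right-factor letter $\overline{p-1}^{\,\delta}$. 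The follow-up $(p-1)^a$ carries no $\delta$-shift because by that moment the right factor has been pushed to $\overline{p}^{\,\delta}$, which contributes nothing to the $(p-1)$-signature.

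For $\boldsymbol{a}_{2,2}$, one lifts the $\overline{j_1}$ letters in both tensor factors up through $\overline{n}$ and back down, conjugated by the middle pass $(j_1-1)^{2k+\delta}$; the net effect is to replace $\overline{j_1}$ by $j_1$ throughout. At each stage only one tensor factor contributes to the relevant signature, so the tensor product rule collapses to a single-component computation plus a $\delta$-shift from the right factor. The word $\boldsymbol{a}_{2,3}$ then mirrors $\boldsymbol{a}_{2,1}$: the pair-pushes lower the letters $(j_1-1, j_1)$ at the top of each column of the height-$j_i$ block back down to $(j_i-1, j_i)$ (resp.\ $(j_m+1, j_m+2)$ in the $c_m$ block) while lowering the right factor from $(j_1-1)^{\delta}$ to $(j_m+1)^{\delta}$, and the exponent accounting is completely analogous. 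Finally, $\boldsymbol{a}_{2,4}$ acts only on the right factor, lowering $(j_m+1)^{\delta}$ through $1^{\delta}$ to yield $1^l$, since the main tableau $\bar\mu$ is already highest and contributes nothing to any $p$-signature for $p\le j_m$.

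The main obstacle will be the transitions between successive column-height blocks in $\boldsymbol{a}_{2,1,i}$ and $\boldsymbol{a}_{2,3,i}$. At each boundary one must confirm that the prescribed exponent is tight, i.e., that no additional $+-$ pairs in the signature cancel some of the expected minus signs; this reduces to a careful columnwise accounting of $\varepsilon_p$ and $\varphi_p$ for each $p$ in the range $[j_i, j_{i-1})$ (resp.\ the symmetric range for $\boldsymbol{a}_{2,3}$). Although lengthy, the bookkeeping is mechanical once the pattern from $\boldsymbol{a}_{2,1,m}$ is established, and no new conceptual difficulty arises in propagating it across the remaining blocks.
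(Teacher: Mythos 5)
Your proposal is correct and is essentially the verification the paper itself leaves implicit: the paper states Lemma \ref{lem:special2} as a chain of displayed computations with no separate proof, expecting exactly the signature-rule bookkeeping you describe, and you correctly identify all the key mechanisms (the pair-push structure of $\boldsymbol{a}_{2,1}$ and $\boldsymbol{a}_{2,3}$, the origin of the $\delta$-shifts in the first operator of each pair, the conjugated lift in $\boldsymbol{a}_{2,2}$, and why $\boldsymbol{a}_{2,4}$ touches only the right factor). The only quibble is your parenthetical count for $\boldsymbol{a}_{2,1,i}$: the $2k-\sum_{j=i}^{m}c_j$ minus signs arise as one per column from the bar-letters in \emph{all} $k$ columns plus one more from the unbarred letter in each of the $\sum_{j=1}^{i-1}c_j$ taller columns, not one per column of height at least $j_i$.
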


Since $e_{\boldsymbol{a}_{2,4}}
e_{\boldsymbol{a}_{2,3}}
e_{\boldsymbol{a}_{2,2}}
e_{\boldsymbol{a}_{2,1}}=e_{\boldsymbol{a}_{2}}$,
we have finished the proof of Lemma \ref{lem_special1}.

\subsection{Proof: Part 2}
The goal of this subsection is the following lemma:
\begin{lemma}\label{lem_special2}
$e_{\boldsymbol{a}_2}
e_0^{2k-c_m+\delta}
f_{\boldsymbol{a}_1}
(1^l\otimes\mu )
=1^l\otimes\bar{\mu}.$
\end{lemma}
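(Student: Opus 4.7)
The proof will mirror that of Lemma \ref{lem_special1}, split into three sublemmas that compute $f_{\boldsymbol{a}_1}(1^l\ot\mu)$, then $e_0^{2k-c_m+\delta}$ applied to the result, and finally $e_{\boldsymbol{a}_2}$ applied to that. The key tool at each stage is the tensor-product signature rule, which determines whether a Kashiwara operator acts on the left factor in $B^{1,l}$ or the right factor in $B^{r,k}$ based on the current values of $\ve_i$ and $\vp_i$ of each factor.

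For the first stage I trace the blocks of $\boldsymbol{a}_1$ in order. Since $\vp_i(1^l)=0$ for $i\geq 2$ and $\ve_i(\mu)=0$ for every $i\in I_0$, the rule $\vp_i(b_1)>\ve_i(b_2)$ forces every $f_i$ with $i\geq 2$ in blocks $1$ through $j_m-1$ of $\boldsymbol{a}_1$ to act on the right factor, producing the same intermediate tableau (with $c_m$ special columns of content $(1,4,5,\ldots,j_m+2)$) as arises at the corresponding point in the proof of Lemma \ref{lem_special1}. In the final block $f_1^{c_m}f_2^{c_m}$ the roles reverse: the right factor has $\ve_1=\ve_2=0$ while $\vp_1(1^l)=l$, so both $f_1^{c_m}$ and then $f_2^{c_m}$ act on the left factor, converting $1^l$ to the coordinate element with $x_1=l-c_m$ and $x_3=c_m$. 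Hence $f_{\boldsymbol{a}_1}(1^l\ot\mu)$ is the tensor product of this modified left factor with the ``pre-last-block'' version of $f_{\boldsymbol{a}_1}(\mu)$.

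For the second stage I compute $e_0^{2k-c_m+\delta}$ either directly via the signature rule (reading $\ve_0,\vp_0$ of the left factor from \eqref{eq:veps vphi}) or by conjugating through $\sigma$ using \eqref{eq:sigma}, in the same spirit as Lemma \ref{lem:special1}; this requires applying $\mathfrak{S}$ to the $\pm$-diagram on the right while accounting for the contribution of the left coordinates to the tensor $0$-signature. For the third stage I apply $e_{\boldsymbol{a}_2}$ letter by letter as in Lemma \ref{lem:special2}; the subword $\boldsymbol{a}_{2,4}=j_m^\delta(j_m-1)^\delta\cdots 1^\delta$ is designed precisely to reset the left factor back to $1^l$ while completing the promotion of the right factor to $\bar{\mu}$.

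The main obstacle is the careful bookkeeping throughout: each operator's target (left or right) depends dynamically on the full history, so one cannot simply import the identities from Part 1. The left factor takes on several intermediate forms before being restored to $1^l$, and the offsets $\delta$ appearing in the exponents of $\boldsymbol{a}_{2,1},\boldsymbol{a}_{2,2},\boldsymbol{a}_{2,3}$ are calibrated to account for contributions from the left factor's evolution. Verifying that these offsets combine correctly with $\boldsymbol{a}_{2,4}$ to yield exactly $1^l\ot\bar{\mu}$ at the end is the central technical check.
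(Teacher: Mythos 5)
Your high-level architecture matches the paper's: compute $f_{\boldsymbol{a}_1}(1^l\ot\mu)$, then $e_0^{2k-c_m+\delta}$, then $e_{\boldsymbol{a}_2}$, deciding at each step via the signature rule whether the operator hits the $B^{1,l}$ or the $B^{r,k}$ factor. But there is a genuine gap in your first stage, and it propagates. You assert that in the final block of $\boldsymbol{a}_1$ both $f_1^{c_m}$ and then $f_2^{c_m}$ act entirely on the left factor, producing the element with $x_1=l-c_m$, $x_3=c_m$. This is only true when $l\ge c_m$. When $l<c_m$ the left factor is exhausted after $l$ applications of $f_1$ (it becomes $2^l$ with $\vp_1=0$, and $x_1=l-c_m$ would be negative), so the remaining $c_m-l$ applications of $f_1$, and likewise of $f_2$, fall on the right factor; the correct outcome is $3^l\ot(345\cdots(j_m+2))^{c_m-l}(145\cdots(j_m+2))^{l}(123\cdots j_{m-1})^{c_{m-1}}\cdots$. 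This dichotomy is not a technicality: it is exactly where $\delta=(l-c_m)_+$ enters, and the two cases require different subsequent computations. In the paper, Case 1 ($l<c_m$, so $\delta=0$) is the hard case and needs its own auxiliary words $\boldsymbol{a}_5,\boldsymbol{a}_6$ and a fresh $\pm$-diagram analysis of $\mathfrak{S}$ (with a further case assumption on where $l$ and $c_m-l$ sit relative to the partial sums $\sum_j 2c_j$), while Case 2 is handled by formally setting $l=c_m$ in Case 1 and observing that the leftover $\ol{2}^{\,l-c_m}$ in the first component supplies precisely the extra exponents $\delta$ in $\boldsymbol{a}_2$. None of this appears in your outline.

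Beyond that, your stages 2 and 3 are statements of intent rather than arguments: ``apply $\mathfrak{S}$ while accounting for the contribution of the left coordinates'' and ``verify the offsets combine correctly'' is where the entire content of the lemma lives, and you correctly identify it as ``the central technical check'' without performing it. To repair the proof, split into the cases $l<c_m$ and $l\ge c_m$ at the outset, compute $f_{\boldsymbol{a}_1}(1^l\ot\mu)$ correctly in each, and in Case 1 carry out the $\sigma$-conjugation for $e_0^{2k-c_m}$ on the element $3^l\ot w_2$ explicitly before comparing the $e_{\boldsymbol{a}_2}$ computation with the one from Lemma \ref{lem:special2}.
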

To begin with, we have
\begin{align}
&f_{\boldsymbol{a}_1}(1^l\otimes
(123\cdots j_m)^{c_m}
(123\cdots j_{m-1})^{c_{m-1}}
\cdots
(123\cdots j_1)^{c_1}
)\nonumber\\
=\,&
f_2^{c_m}f_{1}^{c_m}
(1^l\otimes
(145\cdots (j_m+2))^{c_m}
(123\cdots j_{m-1})^{c_{m-1}}
\cdots
(123\cdots j_1)^{c_1}
).
\nonumber
\end{align}
Here, we need to divide the calculation into two cases. \bigskip\\
{\it Case 1:}
If $l<c_m$, we have
\begin{align*}
f_{\boldsymbol{a}_1}(1^l\otimes\mu)=&\,
3^l\otimes
(345\cdots (j_m+2))^{c_m-l}
(145\cdots (j_m+2))^l
\\
&\hspace{8mm}
(123\cdots j_{m-1})^{c_{m-1}}
\cdots
(123\cdots j_1)^{c_1}.
\\
=:&\, 3^l\otimes w_2.
\end{align*}
{\it Case 2:}
If $l\geq c_m$, we have
\begin{align}
f_{\boldsymbol{a}_1}(1^l\otimes\mu)=
3^{c_m}1^{l-c_m}
\otimes
(145\cdots (j_m+2))^{c_m}
(123\cdots j_{m-1})^{c_{m-1}}
\cdots
(123\cdots j_1)^{c_1}.
\nonumber
\end{align}

\subsubsection{Proof for Case 1.}
To begin with we remark that in this case
we have
\begin{align}
2k-c_m=2k-c_m+\delta =2k-c_m+(l-c_m)_+.
\nonumber
\end{align}
\begin{lemma}
Assume that $l$ satisfies
$\sum_{j=1}^{i-1}2c_j<l\leq
\sum_{j=1}^i2c_j$.
Then
\begin{align*}
&e_0^{2k-c_m}
f_{\boldsymbol{a}_1}(1^l\otimes\mu)=
\\
&\hspace{5mm}
3^l\otimes
(345\cdots (j_m+2)\overline{2}\,\overline{1})^{c_m}
(345\cdots j_{m-1}\overline{2}\,\overline{1})^{c_{m-1}}
\cdots
(345\cdots j_{i-1}\overline{2}\,\overline{1})^{c_{i-1}}
\\
&\hspace{12mm}
(345\cdots j_i\overline{2}\,\overline{1})^{\sum_{j=1}^ic_j-l/2}
(145\cdots j_i\overline{3}\,\overline{2})^{l/2-\sum_{j=1}^{i-1}c_j}
\\
&\hspace{12mm}
(145\cdots j_{i+1}\overline{3}\,\overline{2})^{c_{i+1}}
\cdots
(145\cdots j_2\overline{3}\,\overline{2})^{c_2}
(145\cdots j_1\overline{3}\,\overline{2})^{c_1}.
\end{align*}
\end{lemma}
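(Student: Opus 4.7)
The proof parallels that of Lemma \ref{lem:special1}. By the coordinate formulas \eqref{eq:veps vphi}, the element $3^l$ has $\ve_0(3^l)=\vp_0(3^l)=0$, so the 0-signature of $3^l\otimes w_2$ receives no contribution from the left factor. Consequently
\[
e_0^{2k-c_m}(3^l\otimes w_2)=3^l\otimes e_0^{2k-c_m}(w_2),
\]
provided $\ve_0(w_2)\ge 2k-c_m$, which will be verified en route. Thus the task reduces to computing $e_0^{2k-c_m}(w_2)$ inside $B^{r,k}$.

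The plan is to exploit the identity $e_0=\sigma\circ e_1\circ\sigma$ from \eqref{eq:0 action}, i.e.\ to write $e_0^{2k-c_m}(w_2)=\sigma\circ e_1^{2k-c_m}\circ\sigma(w_2)$. Evaluating $\sigma(w_2)$ via \eqref{eq:sigma} requires: (i) choosing a word $\boldsymbol{a}'_3$ so that $e_{\boldsymbol{a}'_3}(w_2)$ is $J$-highest; (ii) reading off the associated $\pm$-diagram $P'=\Phi^{-1}e_{\boldsymbol{a}'_3}(w_2)$; (iii) applying $\mathfrak{S}$; (iv) recovering $\sigma(w_2)$ as $f_{\mathrm{Rev}(\boldsymbol{a}'_3)}\circ\Phi(\mathfrak{S} P')$. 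Since $w_2$ has the same shape as $\mu$ and differs from $\mu$ only in its leftmost $c_m$ columns of height $j_m$ (with each entry of the first $c_m-l$ columns raised by two, and the height-$2$ entry of the next $l$ columns raised from $2$ to $4$), a suitable $\boldsymbol{a}'_3$ is built from $\boldsymbol{a}_3$ of Lemma \ref{lem:special1} by prepending the raisings $e_2,e_3,\ldots$ needed to undo these shifts. The $\pm$-diagram $P'$ then has the outer shape of $\mu$ with additional $+$ signs at heights $j_m+1$ and $j_m+2$ recording the excess.

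The case hypothesis $\sum_{j=1}^{i-1}2c_j<l\le\sum_{j=1}^{i}2c_j$ controls how $\mathfrak{S}$ redistributes these $+$ signs across the lower column heights: the block of columns of height $j_i$ splits into two sub-blocks of sizes $\sum_{j=1}^i c_j-l/2$ and $l/2-\sum_{j=1}^{i-1}c_j$, matching the corresponding exponents in the claim. After applying $\Phi$ and $f_{\mathrm{Rev}(\boldsymbol{a}'_3)}$ one obtains $\sigma(w_2)$, to which $e_1^{2k-c_m}$ may then be applied since its 1-signature reduces to $-^{2k-c_m}+^{c_m}$ without cancellation; this simultaneously confirms $\ve_0(w_2)\ge 2k-c_m$. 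A final application of $\sigma$, carried out by running the analogous sandwich in reverse, yields the tableau stated in the claim. The principal obstacle is the bookkeeping at step (iii), specifically the correct split on columns of height $j_i$ which produces the two blocks $(345\cdots j_i\overline{2}\,\overline{1})^{\sum_{j=1}^i c_j-l/2}$ and $(145\cdots j_i\overline{3}\,\overline{2})^{l/2-\sum_{j=1}^{i-1}c_j}$; once this redistribution is verified, the remaining steps mirror those of Lemma \ref{lem:special1}.
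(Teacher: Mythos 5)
Your overall strategy is the paper's: reduce to $e_0^{2k-c_m}(w_2)$ (the left factor $3^l$ indeed has $\ve_0=\vp_0=0$), and compute it as $\sigma\circ e_1^{2k-c_m}\circ\sigma(w_2)$ via the $\pm$-diagram realization of $\sigma$. But as written the argument has real gaps, because the step you defer --- ``the bookkeeping at step (iii)'' --- is essentially the entire content of the lemma, and several of the intermediate facts you do assert are not correct. First, the $\pm$-diagram of $e_{\boldsymbol{a}'_3}(w_2)$ carries no signs at heights $j_m+1$ or $j_m+2$: with the word $\boldsymbol{a}_5=2^{c_m-l}(3^{c_m}2^l)(4^{c_m}3^l)\cdots((j_m+1)^{c_m}j_m^{l})$ one reaches $(234\cdots(j_m+1))^{c_m-l}(123\cdots j_m)^l(123\cdots j_{m-1})^{c_{m-1}}\cdots$, i.e.\ the outer shape of $\mu$ with a $+$ atop every column except the first $c_m-l$ columns of height $j_m$, which are unsigned. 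The heights $j_m+1,j_m+2$ appear only \emph{after} $\mathfrak{S}$, as $\mp$ pairs over those $c_m-l$ unsigned columns (not as extra $+$'s). Second, the redistribution at this first application of $\mathfrak{S}$ is governed by where $c_m-l$ (not $l$) falls among the partial sums $\sum_j 2c_j$; the hypothesis on $l$ enters only at the second application of $\mathfrak{S}$, after $e_1^{2k-c_m}$ has acted, and it is there that the split into $\sum_{j=1}^ic_j-l/2$ and $l/2-\sum_{j=1}^{i-1}c_j$ is produced. Your sketch conflates these two redistributions.

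Third, the closing $\sigma$ is not ``the analogous sandwich in reverse'': the element $w_3=e_1^{2k-c_m}\sigma(w_2)$ must be brought to $J$-highest form by a quite different word (the paper's $\boldsymbol{a}_6$, which begins with $2^{2k-l}$, climbs up through $n$ and back, and ends with $(j_m+1)^l\cdots 3^l2^l$), and its $\pm$-diagram and $\mathfrak{S}$-image differ from those of the first step; this second pass is where the exponents in the stated answer are actually generated, and it is not a formal reversal of the first. A smaller slip: $\vp_0(w_2)=c_m-l$, so the reduced $1$-signature of $\sigma(w_2)$ is $-^{2k-c_m}+^{c_m-l}$ rather than $-^{2k-c_m}+^{c_m}$; this is harmless for applying $e_1^{2k-c_m}$ but indicates the computation was not actually traced. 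In short, the plan coincides with the paper's proof, but the computations that constitute the proof are not carried out, and the descriptions given of the intermediate $\pm$-diagrams would lead them astray.
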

\begin{proof}
Define a word $\boldsymbol{a}_5$ by
$\boldsymbol{a}_5=
2^{c_m-l}
\underbrace{3^{c_m}2^l\mathstrut}
\underbrace{4^{c_m}3^l\mathstrut}
\cdots
\underbrace{(j_m+1)^{c_m}j_m^l}$.
Then we have
\begin{align*}
e_{\boldsymbol{a}_5}(w_2)=
(234\cdots (j_m+1))^{c_m-l}
(123\cdots j_m)^l
(123\cdots j_{m-1})^{c_{m-1}}
\cdots
(123\cdots j_1)^{c_1}.
\end{align*}
By the map $\Phi^{-1}$,
$e_{\boldsymbol{a}_5}(w_2)$
corresponds to the following $\pm$-diagram, $\mathfrak{S}$ acts on it as follows:
\begin{center}
\unitlength 10pt
\begin{picture}(35,10)
\multiput(0,1)(20,0){2}{
\put(0,0){\line(1,0){15}}
\put(0,0){\line(0,1){8}}
\put(0,8){\line(1,0){3.3}}
\put(3.3,8){\line(0,-1){2}}
\put(3.3,6){\line(1,0){3.3}}
%
\put(11.6,4){\line(1,0){3.4}}
\put(15,4){\line(0,-1){4}}
\put(7.7,-1){$c_m-l$}
\put(13.2,-1){$l$}
\multiput(6.6,-1)(0,0.4){18}{\line(0,1){0.2}}
\multiput(11.6,-1)(0,0.4){13}{\line(0,1){0.2}}
\multiput(15,-1)(0,0.4){5}{\line(0,1){0.2}}
}
\put(6.6,7){\line(0,-1){2}}
\put(6.6,5){\line(1,0){5}}
\put(26.6,7){\line(1,0){5}}
\put(31.6,7){\line(0,-1){2}}
\put(0.15,8.3){$+\cdots +$}
\put(3.45,6.3){$+\cdots +$}
\put(11.85,4.3){$+\cdots +$}
\put(15.8,4){$\xrightarrow{\hspace{4mm}\mathfrak{S}\hspace{4mm}}$}
\put(20.1,8.3){$-\cdots -$}
\put(23.4,6.3){$-\cdots -$}
\put(26.95,5.3){$+\cdots\cdots +$}
\put(26.95,6.3){$-\cdots\cdots -$}
\put(31.8,4.3){$-\cdots -$}
\end{picture}
\end{center}
There are $(c_m-l)$ $+$'s at height $(j_m+1)$ in the right $\pm$-diagram.
Assume that $(c_m-l)$ satisfies
$\sum_{j=1}^{i-1}2c_j<(c_m-l)\leq
\sum_{j=1}^i2c_j$.
We also assume that this $i$ satisfies
$i<m$ for the sake of
simplicity.
Then $\Phi\circ\mathfrak{S}\circ
\Phi^{-1}\circ
e_{\boldsymbol{a}_5}(w_2)$ is
\begin{align*}
&
(23\cdots j_m\overline{1})^l
(23\cdots (j_m+2)\overline{1})^{c_m-l}
\\
&
(23\cdots (j_m+2)(j_m+3)(j_m+4)\cdots
j_{m-1}\overline{1})^{c_{m-1}}
\cdots\cdots
\\
&
(23\cdots (j_m+2)(j_m+3)(j_m+4)\cdots
j_{i+1}\overline{1})^{c_{i+1}}
\\
&
(23\cdots (j_m+2)(j_m+3)(j_m+4)\cdots
j_{i}\overline{1})^{\sum_{j=1}^{i}c_j-(c_m-l)/2}
\\
&
(12\cdots (j_m+1)(j_m+3)(j_m+4)\cdots
j_{i}\overline{(j_m+2)})^{(c_m-l)/2-\sum_{j=1}^{i-1}c_j}
\\
&
(12\cdots (j_m+1)(j_m+3)(j_m+4)\cdots
j_{i-1}\overline{(j_m+2)})^{c_{i-1}}
\cdots\cdots
\\
&
(12\cdots (j_m+1)(j_m+3)(j_m+4)\cdots
j_{1}\overline{(j_m+2)})^{c_{1}},
\end{align*}
and $f_{\mathrm{Rev}(\boldsymbol{a}_5)}\circ
\Phi\circ\mathfrak{S}\circ
\Phi^{-1}\circ
e_{\boldsymbol{a}_5}(w_2)$ is
\begin{align*}
&
(456\cdots (j_m+2)\overline{1})^l
(234\cdots (j_m+1)(j_m+2)\overline{1})^{c_m-l}
\\
&
(234\cdots (j_m+1)(j_m+2)(j_m+3)\cdots
j_{m-1}\overline{1})^{c_{m-1}}
\cdots\cdots
\\
&
(234\cdots (j_m+1)(j_m+2)(j_m+3)\cdots
j_{i+1}\overline{1})^{c_{i+1}}
\\
&
(234\cdots (j_m+1)(j_m+2)(j_m+3)\cdots
j_{i}\overline{1})^{\sum_{j=1}^{i}c_j-(c_m-l)/2}
\\
&
(134\cdots (j_m+1)(j_m+2)(j_m+3)\cdots
j_{i}\overline{2})^{(c_m-l)/2-\sum_{j=1}^{i-1}c_j}
\\
&
(134\cdots (j_m+1)(j_m+2)(j_m+3)\cdots
j_{i-1}\overline{2})^{c_{i-1}}
\cdots\cdots
\\
&
(134\cdots (j_m+1)(j_m+2)(j_m+3)\cdots
j_{1}\overline{2})^{c_{1}}.
\end{align*}
{}From this expression, we have $\varepsilon_0(w_2)=2k-c_m,\varphi_0(w_2)=c_m-l$.
Applying
$e_1^{\varepsilon_0(w_2)}$ we get
\begin{align*}
w_3:=&\,
e_1^{2k-c_m}\circ
f_{\mathrm{Rev}(\boldsymbol{a}_5)}\circ
\Phi\circ\mathfrak{S}\circ
\Phi^{-1}\circ
e_{\boldsymbol{a}_5}(w_2)
\\
=&\, (456\cdots (j_m+2)\overline{2})^{l}
(134\cdots (j_m+2)\overline{2})^{c_m-l}
(134\cdots j_{m-1}\overline{2})^{c_{m-1}}
\cdots
(134\cdots j_1\overline{2})^{c_1}.
\end{align*}
Note that the length of the string
$(456\cdots (j_m+2)\overline{2})$ is $j_m$
whereas that of the string
$(134\cdots (j_m+2)\overline{2})$ is $j_m+2$.

In order to convert the action of $e_1^{2k-c_m}$
into that of $e_0^{2k-c_m}$,
we define the word
$\boldsymbol{a}_{6}
=\boldsymbol{a}_{6,1}
\boldsymbol{a}_{6,2}
\boldsymbol{a}_{6,3}
\boldsymbol{a}_{6,4}
\boldsymbol{a}_{6,5}$
as follows.
$\boldsymbol{a}_{6,1}=2^{2k-l}$, $\boldsymbol{a}_{6,2}=
\boldsymbol{a}_{6,2,m+1}
\cdots
\boldsymbol{a}_{6,2,2}$
where subwords are defined by
\begin{align*}
&\boldsymbol{a}_{6,2,m+1}=
3^{2k}4^{2k}\cdots (j_m+1)^{2k},\\
&\boldsymbol{a}_{6,2,m}=
(j_m+2)^{2k-c_m}(j_m+3)^{2k-c_m}\cdots
(j_{m-1}-1)^{2k-c_m},\\
&\boldsymbol{a}_{6,2,i}=
j_i^{2k-\sum_{j=i}^mc_j}
(j_i+1)^{2k-\sum_{j=i}^mc_j}\cdots
(j_{i-1}-1)^{2k-\sum_{j=i}^mc_j}\quad
(m-1\geq i\geq 2),
\end{align*}
$\boldsymbol{a}_{6,3}=
j_1^k(j_1+1)^k\cdots n^k(n-1)^k\cdots (j_1+1)^kj_1^k$,
$\boldsymbol{a}_{6,4}=
\boldsymbol{a}_{6,4,1}\cdots
\boldsymbol{a}_{6,4,m-1}$ where
subwords are defined by
\begin{align*}
&\boldsymbol{a}_{6,4,i}=
(j_i-1)^{k-\sum_{j=1}^ic_j}
(j_i-2)^{k-\sum_{j=1}^ic_j}\cdots
j_{i+1}^{k-\sum_{j=1}^ic_j},\quad
(1\leq i\leq m-2),\\
&\boldsymbol{a}_{6,4,m-1}=
(j_{m-1}-1)^{k-\sum_{j=1}^{m-1}c_j}
(j_{m-1}-2)^{k-\sum_{j=1}^{m-1}c_j}\cdots
(j_{m}+2)^{k-\sum_{j=1}^{m-1}c_j},
\end{align*}
and $\boldsymbol{a}_{6,5}=(j_m+1)^l\cdots 3^l2^l$.
Computation of $e_{\boldsymbol{a}_6}(w_3)$
proceeds as follows:
\begin{align*}
e_{\boldsymbol{a}_{6,1}}(w_3)=&\,
(4567\cdots (j_m+2)\overline{3})^{l}
(1245\cdots (j_m+2)\overline{3})^{c_m-l}
\\&\,
(1245\cdots j_{m-1}\overline{3})^{c_{m-1}}
\cdots
(1245\cdots j_1\overline{3})^{c_1}
\\
\xrightarrow{e_{\boldsymbol{a}_{6,2}}}&\,
(345\cdots (j_m+1)\overline{j_1})^{l}
(123\cdots (j_m+1)\overline{j_1})^{c_m-l}
\\&\,
(123\cdots (j_{m-1}-1)\overline{j_1})^{c_{m-1}}
\cdots
(123\cdots (j_1-1)\overline{j_1})^{c_1}
\\
\xrightarrow{e_{\boldsymbol{a}_{6,3}}}&\,
(345\cdots (j_m+1){j_1})^{l}
(123\cdots (j_m+1){j_1})^{c_m-l}
\\&\,
(123\cdots (j_{m-1}-1){j_1})^{c_{m-1}}
\cdots
(123\cdots (j_1-1){j_1})^{c_1}
\\
\xrightarrow{e_{\boldsymbol{a}_{6,4}}}&\,
(345\cdots (j_m+1)(j_m+2))^{l}
(123\cdots (j_m+1)(j_m+2))^{c_m-l}
\\&\,
(123\cdots (j_{m-1}-1){j_{m-1}})^{c_{m-1}}
\cdots
(123\cdots (j_1-1){j_1})^{c_1}
\\
\xrightarrow{e_{\boldsymbol{a}_{6,5}}}&\,
(234\cdots j_m(j_m+1))^{l}
(123\cdots (j_m+1)(j_m+2))^{c_m-l}
\\&\,
(123\cdots (j_{m-1}-1){j_{m-1}})^{c_{m-1}}
\cdots
(123\cdots (j_1-1){j_1})^{c_1}.
\end{align*}
By $\Phi^{-1}$, this corresponds to the following 
$\pm$-diagram, and
$\mathfrak{S}$ acts on it as follows:
\begin{center}
\unitlength 10pt
\begin{picture}(35,11)
\multiput(0,1)(20,0){2}{
\put(0,0){\line(1,0){15}}
\put(0,0){\line(0,1){9}}
\put(0,9){\line(1,0){3.3}}
\put(3.3,9){\line(0,-1){2}}
\put(3.3,7){\line(1,0){3.3}}
\put(6.6,7){\line(0,-1){2}}
\put(6.6,5){\line(1,0){5}}
\put(15,3){\line(0,-1){3}}
\put(7.7,-1){$c_m-l$}
\put(13.1,-1){$l$}
\multiput(6.6,-1)(0,0.4){15}{\line(0,1){0.2}}
\multiput(11.6,-1)(0,0.4){15}{\line(0,1){0.2}}
\multiput(15,-1)(0,0.4){5}{\line(0,1){0.2}}
}
\put(11.6,6){\line(0,-1){2}}
\put(11.6,4){\line(1,0){3.4}}
\put(35,6){\line(0,-1){2}}
\put(31.6,6){\line(1,0){3.4}}
\put(0.2,9.3){$+\cdots +$}
\put(3.5,7.3){$+\cdots +$}
\put(7.0,5.3){$+\cdots\cdots +$}
\put(15.8,4){$\xrightarrow{\hspace{4mm}\mathfrak{S}\hspace{4mm}}$}
\put(20.1,9.3){$-\cdots -$}
\put(23.5,7.3){$-\cdots -$}
\put(27.0,5.3){$-\cdots\cdots -$}
\put(31.9,5.3){$-\cdots -$}
\put(31.9,4.3){$+\cdots +$}
\end{picture}
\end{center}
Let us assume that $\sum_{j=1}^{i-1}2c_j
<l\leq\sum_{j=1}^i2c_j$.
Then the right $\pm$-diagram corresponds to
the expression (\ref{eq:w_3}) with $\sum_{j=1}^ic_j-c_m/2$
and $c_m/2-\sum_{j=1}^{i-1}c_j$
in (\ref{eq:w_3})
being replaced with $\sum_{j=1}^ic_j-l/2$
and $l/2-\sum_{j=1}^{i-1}c_j$.
Application of $f_{\mathrm{Rev}(\boldsymbol{a}_{6,5})}$
is similar to that of $f_{\mathrm{Rev}(\boldsymbol{a}_{3})}$
on (\ref{eq:w_3}) and we obtain
$f_{\mathrm{Rev}(\boldsymbol{a}_{6,5})}\circ
\Phi\circ\mathfrak{S}\circ\Phi^{-1}\circ
e_{\boldsymbol{a}_{6}}(w_{3})$
as
\begin{align}\label{eq:special3}
&
(234\cdots (j_m+2)\overline{1})^{c_m}
(234\cdots j_{m-1}\overline{1})^{c_{m-1}}
\cdots
(234\cdots j_{i-1}\overline{1})^{c_i}
\\
&
(234\cdots j_i\overline{1})^{\sum_{j=1}^ic_j-l/2}
(134\cdots j_i\overline{2})^{l/2-\sum_{j=1}^{i-1}c_j}
\cdots
(134\cdots j_2\overline{2})^{c_2}
(134\cdots j_1\overline{2})^{c_1}.
\nonumber
\end{align}
The remaining computation of
$f_{\mathrm{Rev}(\boldsymbol{a}_6)}$ is almost the same as
the computation of $f_{\mathrm{Rev}(\boldsymbol{a}_4)}$
given in the final part of the proof of
Lemma \ref{lem:special1}.
The only difference in
$f_{\boldsymbol{a}_{6,1}}=f_2^{2k-l}$
is caused by the fact that letters $1$ and $\overline{2}$
appear $l$ times in $f_{\mathrm{Rev}(\boldsymbol{a}_{6,5})}\circ
\Phi\circ\mathfrak{S}\circ\Phi^{-1}\circ
e_{\boldsymbol{a}_{6}}(w_{3})$.
\end{proof}

As for $e_{\boldsymbol{a}_2}$, the beginning two steps
$e_1^{2k}e_2^{2k}
e_0^{2k-c_m}f_{\boldsymbol{a}_1}
(1^l\otimes\mu)$
gives
\begin{align}
1^l\otimes\,
&(145\cdots (j_m+2)\overline{3}\,\overline{2})^{c_m}
(145\cdots j_{m-1}\overline{3}\,\overline{2})^{c_{m-1}}
\cdots
(145\cdots j_2\overline{3}\,\overline{2})^{c_2}
(145\cdots j_1\overline{3}\,\overline{2})^{c_1}.
\nonumber
\end{align}
By comparing this with
$e_1^{2k}e_2^{2k+\delta}e_0^{2k-c_m+\delta}
f_{\boldsymbol{a}_1}(\mu\otimes 1^l)$,
we see that the rest of the computation of
$e_{\boldsymbol{a}_2}$ is almost the same as that
given in Lemma \ref{lem:special2}.
This completes the proof for Case 1.

\subsubsection{Proof for Case 2.}
Note that in this case we have
$(l-c_m)_+=l-c_m$.
Action of $e_0^{2k-c_m}$ is obtained by
formally setting $l=c_m$ in Case 1.
Therefore we have
\begin{align}
e_0^{2k-c_m+(l-c_m)_+}
f_{\boldsymbol{a}_1}
(1^l\otimes\mu)
=\overline{2}
\mathstrut^{l-c_m}3^{c_m}
\otimes
(e_0^{2k-c_m}(w_2)|_{l=c_m}).
\nonumber
\end{align}
where $e_0^{2k-c_m}(w_2)$ is given in (\ref{eq:special3}).
When we further apply
$e_{\boldsymbol{a}_2}$ on this formula,
we realize that there are
extra exponents originating from
$\overline{2}
\mathstrut^{l-c_m}$
in the first tensor component of the
right hand side.
These extra contributions coincide with the exponents $\delta$
in $\boldsymbol{a}_2$. We have completed the proof of
Lemma \ref{lem_special2}.

\subsection{Proof: Part 3}
Now we can prove Proposition \ref{prop_special}. We prove the first relation by descending induction
on $j_m$. If $j_m=r$ (this is the maximal possible value), we have $\mu=(k^r)$. In this case we see
$((k^r)\ot 1^l)=1^l\ot(k^r)$ by weight consideration. The induction proceeds by using Lemmas 
\ref{lem_special1} and \ref{lem_special2}. 

As for the energy function, we have to
look carefully the action of $e_0$
in Lemmas \ref{lem_special1} and \ref{lem_special2}.
If $e_0$ acts on the second component of
the tensor product, we write $R$,
and $L$ on the first component.
We summarize actions of $e_0$
to get $\bar{\mu}\otimes 1^l$
and $1^l\otimes\bar{\mu}$
in two lemmas as follows
(proceeds from left to right):
\begin{align}
\mu\otimes 1^l:\,&
\underbrace{R\cdots\cdots\cdots R}_{(l-c_m)_+}
\underbrace{L\cdots L}_{2k-c_m}
\nonumber\\
1^l\otimes\mu :\,&
\underbrace{R\cdots R}_{2k-c_m}
\underbrace{L\cdots\cdots\cdots L}_{(l-c_m)_+}
\nonumber
\end{align}
The diagram is drawn in the case of $(l-c_m)_+>2k-c_m$.
Including the other inequality case,
we see that we have exactly the same
number of $(LL)$ and $(RR)$ cases (see \eqref{eq:e-func}).
Therefore we have $H(\mu\otimes 1^l)=H(\bar{\mu}\otimes 1^l)$.
Using the same induction as above we obtain
$H(\mu\otimes 1^l)=H((k^r)\otimes 1^l)=0$.
This completes the proof of
Proposition \ref{prop_special}.

\section{Reduction to the special case}

\subsection{Odd $r$ case}

\subsubsection{Calculation in  $B^{r,k}\otimes B^{1,l}$}
Let $\mu\ot x\in B^{r,k}\ot B^{1,l}$ be $I_0$-highest. Recall that we defined $\mu_i$ by 
$\mu=\sum_i\mu_i\La_i$. (Readers are warned that it is not the multiplicity of $i$ in the corresponding
partition $\mu$ but its conjugate $\mu'$.) Note that $\mu_i=0$ unless $1\le i\le r$ and $i$ is odd. We also 
know that the coordinates other than $x_1,x_2,\ldots,x_{r+1},\bar{x}_r,
\ldots,\bar{x}_3,\bar{x}_1$ are all $0$ by Proposition \ref{prop:ht cond 1}.

Let us define a word 
$\boldsymbol{b}_1=\boldsymbol{b}_{1,1}
\boldsymbol{b}_{1,2}$ by 
\begin{align}
\boldsymbol{b}_{1,1}=
&\,
2^{s_2}3^{s_3}\cdots r^{s_r}
(r+1)^{\alpha}(r+2)^{\alpha}\cdots n^{\alpha}(n-1)^{\alpha}
\cdots (r+1)^{\alpha}r^{\alpha}
\label{b11}\\
&
(r-1)^{\bar{s}_{r-1}}(r-2)^{\bar{s}_{r-2}}\cdots
2^{\bar{s}_{2}},
\nonumber\\
\boldsymbol{b}_{1,2}=
&\,
1^{\bar{\bar{s}}_1}
2^{\bar{\bar{s}}_2}3^{\bar{\bar{s}}_3}\cdots
(r-1)^{\bar{\bar{s}}_{r-1}}
r^k(r+1)^k\cdots n^k(n-1)^k\cdots (r+1)^kr^k
\nonumber\\
&
(r-1)^{\bar{\bar{\bar{s}}}_{r-1}}
(r-2)^{\bar{\bar{\bar{s}}}_{r-2}}\cdots
2^{\bar{\bar{\bar{s}}}_{2}}
1^{\bar{\bar{\bar{s}}}_{1}}
\nonumber
\end{align}
where the exponents are defined as follows.
For $i=1,2,\cdots,(r+1)/2$,
\begin{align}
s_{2i-1}=&\,
2k-\sum_{j=1:\mathrm{odd}}^{2i-1}\mu_j
+x_1+x_{2i}+\sum_{j=3:\mathrm{odd}}^{2i-1}\bar{x}_{j},
\qquad
s_{2i}=s_{2i-1}-x_{2i}.
\nonumber
\end{align}
Define $\alpha=k+x_1+\sum_{j=3:\mathrm{odd}}^{r}\bar{x}_{j}$.
Then $s_r=\alpha+x_{r+1}$.
For $i=(r-1)/2,\cdots,2,1$,
\begin{align}
\bar{s}_{2i}=\bar{s}_{2i-1}=
\alpha-\sum_{j=2i+1:\mathrm{odd}}^r\mu_j
+\sum_{j=2i+1}^{r+1}x_j.
\nonumber
\end{align}
Set $\bar{\bar{s}}_1=2k+l-\mu_1$
and define other $\bar{\bar{s}}_i$ by
$\bar{\bar{s}}_{2i-1}=\bar{\bar{s}}_{2i}=
2k-\sum_{j=1:\mathrm{odd}}^{2i-1}\mu_j$
for $i=1,2,\cdots,(r-1)/2$.
Note that $\bar{\bar{s}}_{r-1}=k+\mu_r$.
Set ${\bar{\bar{\bar{s}}}_{1}}=0$,
${\bar{\bar{\bar{s}}}_{2}}=x_2+\bar{x}_1$
and define other
${\bar{\bar{\bar{s}}}_{i}}$ by
${\bar{\bar{\bar{s}}}_{2i}}=
{\bar{\bar{\bar{s}}}_{2i-1}}=
k-\sum_{j=2i+1:\mathrm{odd}}^r\mu_j$
for $i=(r-1)/2,\cdots,3,2$.

The goal of this subsection is to prove the following proposition.

\begin{prop}\label{reduction_odd}
We have
$$e_{\boldsymbol{b}_1}e_0^{2k-\mu_1+x_1+x_2}(\mu\otimes x)
=\bar{\mu}\otimes \bar{3}^{x_2+\bar{x}_1}1^{l-x_2-\bar{x}_1}$$
where $\bar{\mu}=\sum_{i=3:\mathrm{odd}}^r\mu_i\La_i+\mu_1\La_3$.
\end{prop}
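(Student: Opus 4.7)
The plan is to mimic the proof of Lemma~\ref{lem_special1}, now with a general $x \in B^{1,l}$ replacing the highest element $1^l$. I begin by writing $\mu$ as a KN tableau whose columns have only odd heights (since $\mu_i = 0$ for even $i$) and recording its Japanese reading word as an explicit product of column factors $(12\cdots j)^{\mu_j}$. I represent $x$ by its coordinates, noting from Proposition~\ref{prop:ht cond 1} that $\bar{x}_i = 0$ for every even $i$ (as $r+1$ is even when $r$ is odd); in particular $\bar{x}_2 = 0$, so \eqref{eq:veps vphi} gives $\varepsilon_0(x) = x_1 + x_2$ and $\varphi_0(x) = \bar{x}_1$.

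The first half of the computation is $e_0^{2k - \mu_1 + x_1 + x_2}(\mu \otimes x)$, where the exponent is intended to equal the full value of $\varepsilon_0(\mu \otimes x)$ after signature reduction. I apply $e_0$ via the tensor product rule together with $e_0 = \sigma \circ e_1 \circ \sigma$ on the left factor; the map $\sigma$ is computed by applying $\mathfrak{S}$ to the $\pm$-diagram associated with $\mu$ (which has only $+$'s), exactly as in Lemma~\ref{lem:special1}. The expected output is a specific $J$-highest KN tableau on the left tensored with a string of the form $\bar{2}^{\ast}\cdots$ on the right. The second half is $e_{\boldsymbol{b}_1} = e_{\boldsymbol{b}_{1,2}} e_{\boldsymbol{b}_{1,1}}$: the block $\boldsymbol{b}_{1,1}$ descends through Dynkin nodes $2,3,\ldots,n,\ldots,2$ (with the substitutions at node $n$ for types $B_n^{(1)}$, $D_n^{(1)}$ indicated after Theorem~\ref{th:main}) to reshape the left factor into the correct outer shape for $\bar{\mu}$, and $\boldsymbol{b}_{1,2}$ then carries the excess to the right factor, with the final result $\bar{\mu} \otimes \bar{3}^{x_2 + \bar{x}_1} 1^{l - x_2 - \bar{x}_1}$. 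For each subword $e_i^{s_i}$, $e_i^{\bar{s}_i}$, $e_i^{\bar{\bar{s}}_i}$, $e_i^{\bar{\bar{\bar{s}}}_i}$, I verify that the $i$-signature of the current element is of the form $-^{s_i} +^{\ast}$, so the action is maximal and consumes exactly the prescribed exponent.

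The main obstacle is the signature bookkeeping through this long sequence. The exponents $s_i, \bar{s}_i, \bar{\bar{s}}_i, \bar{\bar{\bar{s}}}_i$ are tuned so that at each step the $i$-signature contributions from (a) the current state of the left tableau, (b) the second tensor factor, and (c) the letters inserted by the preceding $e_j$'s sum to exactly the required number of $-$'s, with no intervening $+-$ pair surviving reduction. Verifying this requires tracking how $x_{2i}$, $x_{2i+1}$, $\bar{x}_{2i-1}$ contribute at each stage---precisely the data encoded in the $x$-dependent terms of $s_{2i-1}$ (via $x_1 + x_{2i} + \sum_{j} \bar{x}_j$) and of $\bar{s}_{2i}$ (via $\sum_j x_j$). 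A secondary and essentially routine point is the type transfer from the baseline $A_{2n-1}^{(2)}$ computation to types $B_n^{(1)}$ and $D_n^{(1)}$, which is handled by the node-$n$ substitutions in the word that are already spelled out in Section~4.
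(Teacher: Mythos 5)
Your plan follows the paper's own proof of Proposition \ref{reduction_odd} essentially verbatim: you exploit $\vp_0(\mu)=0$ and $\xb_2=0$ to split $e_0^{2k-\mu_1+x_1+x_2}$ between the two tensor factors (computing the left-factor action via $\mathfrak{S}$ on the all-$+$ $\pm$-diagram, exactly as in Lemma \ref{lem:special1}), and then push $e_{\boldsymbol{b}_{1,1}}$ and $e_{\boldsymbol{b}_{1,2}}$ through by signature bookkeeping, using the highest-weight inequalities (such as $\xb_3\ge x_3$ and $\mu_1\ge x_2+\xb_1$) to decide which factor each $e_i$ hits. The signature verification you correctly identify as the main obstacle is precisely the content of the paper's intermediate lemmas, so the approach is correct and identical in substance.
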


During the proof, we assume $\mu_1$ even.
The proof for odd $\mu_1$ is similar.

\begin{lemma}
If $\sum_{j=i+1}^{r}2\mu_j<\mu_1\leq
\sum_{j=i}^{r}2\mu_j$, then
\begin{align*}
e_0^{2k-\mu_1}\mu =&\,
(3\bar{2}\bar{1})^{\mu_1+\mu_3}
(345\bar{2}\bar{1})^{\mu_5}\cdots
(345\cdots (i-2)\bar{2}\bar{1})^{\mu_{i-2}}
(345\cdots i\bar{2}\bar{1})^{\sum_{j=i}^{r}\mu_j-\mu_1/2}\\
&\,
(145\cdots i\bar{3}\bar{2})^{\mu_1/2-\sum_{j=i+1}^{r}\mu_j}
(145\cdots (i+2)\bar{3}\bar{2})^{\mu_{i+2}}
\cdots (145\cdots r\bar{3}\bar{2})^{\mu_r}.
\end{align*}
\end{lemma}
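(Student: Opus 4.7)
Expand $e_0^{2k-\mu_1}=\sigma\circ e_1^{2k-\mu_1}\circ\sigma$ using \eqref{eq:0 action}. Because $\mu$ is $I_0$-highest (hence $J$-highest for $J=\{2,\ldots,n\}$), the first $\sigma$ reduces to $\Phi\circ\mathfrak{S}\circ\Phi^{-1}$ with no auxiliary word. The $\pm$-diagram $P=\Phi^{-1}(\mu)$ is forced by $e_1\mu=0$ together with the $\Phi$ algorithm to have outer shape $\mu$ and a single $+$ at the top of every column, so its inner shape $\la$ has, for each index $k$, $c_k$ columns of height $j_k-1$. Since $r$ and every $j_k$ are odd, every non-empty column of $\la$ has even height, and the $i\equiv r-1\pmod 2$ branch of $\mathfrak{S}$ flips those top-of-column $+$'s into $-$'s; the $i=0$ branch handles the empty columns of $\la$ by inserting $\mp$-strings capped by a single $-$. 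Reading $\Phi$ off the result produces the explicit Japanese reading word of $\sigma(\mu)$, in which each original $\mu$-column of height $j_k$ becomes a column ending in $\ol 1$.

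\textbf{Step 2.} From this description the reduced $1$-signature of $\sigma(\mu)$ consists of $2k-\mu_1$ minus signs, confirming $\ve_1(\sigma(\mu))=2k-\mu_1$ so our exponent is maximal. Each application of $e_1$ either turns the leading $2$ of a column into a $1$, or turns a $\ol 1$ into a $\ol 2$; performing all $2k-\mu_1$ applications consumes every $\ol 1$ and then eats leading $2$'s until the quota is exhausted. The hypothesis $\sum_{j=i+1}^{r}2\mu_j<\mu_1\le\sum_{j=i}^{r}2\mu_j$ is precisely the condition that this quota runs out inside the block of $\mu_i$ columns of height $j_i$: columns of height $j_k$ with $k>i$ absorb the full $\ol 2\ol 1$ and now read $(345\cdots j_k\ol 2\ol 1)$, while those with $k<i$ retain only $\ol 3\ol 2$ and now read $(145\cdots j_k\ol 3\ol 2)$. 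The $\mu_i$ columns at the transition height $j_i$ split into the counts $\sum_{j=i}^{r}\mu_j-\mu_1/2$ and $\mu_1/2-\sum_{j=i+1}^{r}\mu_j$ required by the statement.

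\textbf{Step 3 and main obstacle.} The tableau produced by step 2 is no longer $J$-highest, so the final $\sigma$ must be computed via \eqref{eq:sigma}: choose an explicit word $\boldsymbol a$ (of the same shape as the $\boldsymbol a_3$ of Lemma~\ref{lem:special1}) which brings the tableau to $J$-highest under $e_{\boldsymbol a}$, apply $\mathfrak{S}$ at the $\pm$-diagram level, and then run $f_{\mathrm{Rev}(\boldsymbol a)}\circ\Phi$. The principal difficulty is the bookkeeping for this second $\sigma$ around the transition index $i$: choosing the exponents in $\boldsymbol a$, verifying that each $e_i^{a_i}$ is maximal at every step, correctly translating $\mathfrak{S}$ on the mixed columns of height $j_i$ (where only $\mu_1/2-\sum_{j=i+1}^{r}\mu_j$ of the $+$'s flip), and checking that $f_{\mathrm{Rev}(\boldsymbol a)}$ terminates in exactly the two-block word displayed in the conclusion. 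No new geometric idea is required beyond those already deployed in the proof of Lemma~\ref{lem:special1}; the remaining work is essentially the arithmetic of exponents.
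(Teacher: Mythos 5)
Your overall plan (write $e_0^{2k-\mu_1}=\sigma\circ e_1^{2k-\mu_1}\circ\sigma$ and compute each factor) is the same as the paper's, but Step~2 is wrong and the error propagates into Step~3. Since $\sigma(\mu)=\ol{1}^{\mu_1}(23\ol{1})^{\mu_3}\cdots(23\cdots r\,\ol{1})^{\mu_r}$, its $1$-signature is $-^{2k-\mu_1}$ with no $+$'s at all, so $e_1^{2k-\mu_1}$ is not a quota that ``runs out'' partway through the columns: it changes \emph{every} $\ol{1}$ into $\ol{2}$ and \emph{every} leading $2$ into $1$, giving the perfectly uniform word $\ol{2}^{\mu_1}(13\ol{2})^{\mu_3}\cdots(134\cdots r\,\ol{2})^{\mu_r}$. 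No transition index $i$ can appear at this stage, and columns such as $(345\cdots j\,\ol{2}\,\ol{1})$ cannot be produced by $e_1$ at all, since on letters $e_1$ only effects $2\mapsto 1$ and $\ol{1}\mapsto\ol{2}$. What you have written as the output of Step~2 is in fact the final answer of the lemma, which your Step~3 then hits with another $\sigma$ --- the argument is internally inconsistent.

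The transition at $i$ is created entirely inside the second $\sigma$, and by a different mechanism than the one you invoke. One first raises $e_1^{2k-\mu_1}\sigma(\mu)$ to the $J$-highest element $2^{\mu_1}(123)^{\mu_3}\cdots(12\cdots r)^{\mu_r}$ by an explicit word. Its $\pm$-diagram has $\mu_1$ signless columns of inner height $1$, and since $1\equiv r\pmod 2$ the corresponding branch of $\mathfrak{S}$ turns all of them into $\mp$-pairs, producing $\mu_1$ new $+$'s at height $2$; it is then the left-to-right greedy matching in the $\Phi$ algorithm (each column offers two alterable positions, its $\ol{1}$ and its leading $2$) that dumps these $\mu_1$ pluses onto the leftmost $\mu_1/2$ columns --- this is exactly where the hypothesis $\sum_{j=i+1}^r 2\mu_j<\mu_1\le\sum_{j=i}^r2\mu_j$ enters. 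In particular $\mathfrak{S}$ acts uniformly on all columns of a given height (it only sees the counts $c_i,p_i$), so your picture in Step~3 of ``only $\mu_1/2-\sum_{j=i+1}^{r}\mu_j$ of the $+$'s flipping'' under $\mathfrak{S}$ on the height-$j_i$ block is not how the split arises. (A smaller slip: in the first $\sigma$, since $0\equiv r-1\pmod 2$ the height-$1$ columns of $\mu$ fall under the sign-interchange branch and simply trade their $+$ for a $-$; no $\mp$-strings are inserted.) As it stands, Step~3 defers precisely the computation that carries the content of the lemma, and the configuration it would start from is incorrect.
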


\begin{proof}
Since $\Phi^{-1}(\mu)$ is the $\pm$-diagram of outer shape $\mu$ such that all the columns have $+$ 
as symbol, we see $\Phi\circ\mathfrak{S}\circ\Phi^{-1}(\mu)
=\bar{1}^{\mu_1}(23\bar{1})^{\mu_3}\cdots
(23\cdots r\bar{1})^{\mu_r}$.
Thus one has $\varepsilon_0(\mu)=2k-\mu_1,\varphi_0(\mu)=0$.
We have
$e_1^{2k-\mu_1}\circ\Phi\circ\mathfrak{S}\circ\Phi^{-1}(\mu)
=\bar{2}^{\mu_1}
(13\bar{2})^{\mu_3}\cdots
(134\cdots r\bar{2})^{\mu_r}$.
To convert the result into that for
$e_0^{2k-\mu_1}$ we define a word 
$\boldsymbol{b}_{1.1}'$ as follows:
\begin{align}
\boldsymbol{b}_{1,1}'=
&\,
2^{s_2'}3^{s_3'}\cdots r^{s_r'}
(r+1)^{\alpha'}(r+2)^{\alpha'}\cdots n^{\alpha'}(n-1)^{\alpha'}
\cdots (r+1)^{\alpha'}r^{\alpha'}
\nonumber\\
&
(r-1)^{\bar{s}_{r-1}'}(r-2)^{\bar{s}_{r-2}'}\cdots
2^{\bar{s}_{2}'}
\nonumber
\end{align}
where $s_i'=s_i|_{x_j=0}$,
$\alpha'=\alpha|_{x_j=0}$, $\bar{s}_i'=\bar{s}_i|_{x_j=0}$.
Then we have
\begin{align*}
e_{\boldsymbol{b}_{1,1}'}\circ
e_1^{2k-\mu_1}\circ\Phi\circ\mathfrak{S}\circ\Phi^{-1}(\mu)
=
2^{\mu_1}(123)^{\mu_3}(12345)^{\mu_5}\cdots
(12\cdots r)^{\mu_r}.
\end{align*}
Applying $\Phi\circ\mathfrak{S}\circ\Phi^{-1}$ further,
we obtain
\begin{align}\label{tochu6_1}
&(23\bar{1})^{\mu_1+\mu_3}
(2345\bar{1})^{\mu_5}\cdots
(23\cdots (i-2)\bar{1})^{\mu_{i-2}}
(23\cdots i\bar{1})^{\sum_{j=i}^{r}\mu_j-\mu_1/2}
\\
&(1245\cdots i\bar{3})^{\mu_1/2-\sum_{j=i+1}^{r}\mu_j}
(1245\cdots (i+2)\bar{3})^{\mu_{i+2}}\cdots
(1245\cdots r\bar{3})^{\mu_r}.
\nonumber
\end{align}
Finally, applying $f_{\mathrm{Rev}(\boldsymbol{b}_{1,1}')}$ we obtain the desired relation.
\end{proof}

\begin{lemma}
$e_0^{2k-\mu_1+x_1+x_2}(\mu\otimes x)$ is equal to
\begin{align}\label{tochu6_2}
e_0^{2k-\mu_1}\mu\otimes
\bar{1}^{x_2+\bar{x}_1}
\bar{2}^{x_1}
\bar{3}^{\bar{x}_3}\cdots
\overline{(r-2)}^{\bar{x}_{r-2}}
\bar{r}^{\bar{x}_r}
(r+1)^{x_{r+1}}\cdots
4^{x_4}
3^{x_3}
\end{align}
\end{lemma}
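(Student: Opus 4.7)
The plan is to apply the tensor product signature rule for $e_0$, split the $2k-\mu_1+x_1+x_2$ applications into those that hit the left factor and those that hit the right factor, and then use the explicit coordinate formula for $e_0$ on $B^{1,l}$ to compute the right factor.

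First I would record $\varepsilon_0(\mu)$ and $\varphi_0(\mu)$. From the proof of the previous lemma we already have $\varepsilon_0(\mu)=2k-\mu_1$ and $\varphi_0(\mu)=0$. Next, since $r$ is odd and $\mu\ot x$ is $I_0$-highest, Proposition \ref{prop:ht cond 1}(ii) forces $\bar{x}_2=0$ (because $2\equiv r+1\pmod 2$), and then \eqref{eq:veps vphi} gives $\varepsilon_0(x)=x_1+x_2$ and $\varphi_0(x)=\bar{x}_1$. Concatenating, the $0$-signature of $\mu\ot x$ reads $-^{2k-\mu_1}-^{x_1+x_2}+^{\bar{x}_1}$; there are no adjacent $+-$ pairs, so this is already reduced.

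Second, I would run the signature rule application by application. So long as the rightmost $-$ of the reduced signature lies in the $x$-block (equivalently $\varphi_0(\text{left})<\varepsilon_0(\text{right})$), each $e_0$ acts on the right factor. Throughout the first $x_1+x_2$ applications, $\varphi_0$ of the left factor stays $0$ while $\varepsilon_0$ of the right factor decreases from $x_1+x_2$ to $0$, so every one of these $e_0$'s hits $x$. After that, $\varepsilon_0(x)=0$, and the remaining $2k-\mu_1$ applications act on the left factor, yielding $e_0^{2k-\mu_1}\mu$ there. This accounts for the full exponent $2k-\mu_1+x_1+x_2$ and identifies the left tensorand exactly as claimed.

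Third, I would compute $e_0^{x_1+x_2}x$ directly from the coordinate formula for $e_0$ on $B^{1,l}$. Starting from $\bar{x}_2=0$ and $x_2>0$, the first rule applies and each step replaces a $2$ by a $\bar 1$; after $x_2$ steps one reaches a state with $x_2=\bar{x}_2=0$ and $\bar{x}_1$ increased by $x_2$. The second rule then takes over and each step replaces a $1$ by a $\bar 2$; after $x_1$ further steps one has $x_1=0$ and $\bar{x}_2=x_1$ (the former value). Using $\bar{x}_i=0$ for even $i$ and $x_i=0$ for $i\ge r+2$, the Japanese reading word of the resulting one-row tableau is exactly $\bar 1^{x_2+\bar{x}_1}\bar 2^{x_1}\bar 3^{\bar{x}_3}\cdots\overline{(r-2)}^{\bar{x}_{r-2}}\bar r^{\bar{x}_r}(r+1)^{x_{r+1}}\cdots 4^{x_4}3^{x_3}$. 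The only thing to double-check is the bookkeeping in step three, where one must verify that the $+-$ reduction in the signature never crosses from the $x$-block into the $\mu$-block; but this is immediate because $\varphi_0(\mu)=0$, so no $+$ precedes the $x$-block.
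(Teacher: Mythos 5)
Your argument is correct and is exactly the paper's (very terse) proof, fully spelled out: the paper simply notes that the second tensor factor of the right-hand side is $e_0^{x_1+x_2}x$ and that $\varphi_0(\mu)=0$ forces the first $x_1+x_2$ applications of $e_0$ onto $x$ and the remaining $2k-\mu_1$ onto $\mu$. Your signature bookkeeping ($\bar{x}_2=0$ giving $\varepsilon_0(x)=x_1+x_2$, $\varphi_0(x)=\bar{x}_1$, no cancellation since $\varphi_0(\mu)=0$) and the coordinate computation of $e_0^{x_1+x_2}x$ are both accurate.
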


\begin{proof}
Note that the second component of the RHS is
$e_0^{x_1+x_2}x$.
Since $\varphi_0(\mu)=0$,
we obtain the expression.
\end{proof}

\begin{lemma}
$e_{\boldsymbol{b}_{1,1}}
e_0^{2k-\mu_1+x_1+x_2}(\mu\otimes x)=$
Eq.$(\ref{tochu6_1})\otimes
\bar{1}^{x_2+\bar{x}_1}2^{l-x_2-\bar{x}_1}$.
\end{lemma}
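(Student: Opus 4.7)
The plan is to apply the signature rule to $e_{\boldsymbol{b}_{1,1}}$ stage by stage on the tensor product produced by the previous lemma. The key observation is that $\boldsymbol{b}_{1,1}$ is obtained from the auxiliary word $\boldsymbol{b}_{1,1}'$ of the first sublemma by inflating each exponent by exactly the amount required to process the second tensor factor
\[
y:=\bar{1}^{x_2+\bar{x}_1}\bar{2}^{x_1}\bar{3}^{\bar{x}_3}\cdots\overline{(r-2)}^{\bar{x}_{r-2}}\bar{r}^{\bar{x}_r}(r+1)^{x_{r+1}}\cdots 4^{x_4}3^{x_3}.
\]
Reading the chain in the first sublemma backwards (and using that each application there was maximal), one obtains $e_{\boldsymbol{b}_{1,1}'}(e_0^{2k-\mu_1}\mu)$ equal to the tableau in \eqref{tochu6_1}. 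Our task is to show that the extra contributions to the exponents in $\boldsymbol{b}_{1,1}$ absorb $y$ precisely into $\bar{1}^{x_2+\bar{x}_1}2^{l-x_2-\bar{x}_1}$.

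I proceed segment by segment through the three parts of $\boldsymbol{b}_{1,1}$. For the ascending segment $2^{s_2}3^{s_3}\cdots r^{s_r}$, at each stage $i$ I compute the combined $i$-signature of the tensor product, using the Japanese reading word for the first factor and the coordinate description \eqref{eq:veps vphi} for the second. The essential check is that this combined signature, after reduction, is a pure string of $-$'s of length $s_i$ with no $+-$ cancellation spanning the tensor boundary. The increment $s_i-s_i'$ should match exactly the number of $-$'s contributed by the currently remaining letters $\overline{i+1}$ and $i$ in the second factor, so that the $s_i$ applications of $e_i$ simultaneously advance the first factor as in the single-tensor version and convert the corresponding letters of the second factor one step (typically $\overline{i+1}\mapsto\overline{i}$ or $(i+1)\mapsto i$). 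For the middle hump $(r+1)^{\alpha}(r+2)^{\alpha}\cdots n^{\alpha}(n-1)^{\alpha}\cdots(r+1)^{\alpha}r^{\alpha}$, the first factor has no letters above $r$, so these operators act only on the second factor; a direct coordinate calculation, with $\alpha$ chosen maximal at each step, transforms the $(r+1)^{x_{r+1}}$ and the barred letters $\bar{j}$ with $j>r$ into the intermediate form required by the descending tail. Finally, the descending tail $(r-1)^{\bar{s}_{r-1}}\cdots 2^{\bar{s}_2}$ acts on both factors, driving the first factor the rest of the way to \eqref{tochu6_1} and cleaning the remaining barred letters of the second factor into $\bar{1}$'s and $2$'s.

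The main obstacle is the signature bookkeeping across the tensor interface. At every stage one must verify that the concatenated $i$-signature contains no cancelling $+-$ pair between the two factors, so that the action truly distributes as claimed; this has to be repeated at every $e_i$ in $\boldsymbol{b}_{1,1}$, tracking how the $\overline{i+1}$, $\overline{i}$, $i$, and $(i+1)$ letters migrate in both factors after the preceding steps. The precise form of the exponents, with sums of $\mu_j$ over odd $j$ adjusted by $x_i$ and $\bar{x}_i$, is dictated by exactly this bookkeeping, and the care is especially needed at the transitions between segments where the shape of the second factor changes and could in principle create cancellations with the first.
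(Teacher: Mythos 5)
Your overall strategy is the same as the paper's: process $\boldsymbol{b}_{1,1}$ operator by operator with the signature rule, use $\vp_i(\text{first factor})=0$ to rule out cancellation across the tensor boundary (so that $e_i$ hits the second factor exactly $\ve_i(\text{2nd factor})$ times and the remaining $s_i'=s_i|_{x_j=0}$ applications reproduce the single-factor computation of $e_{\boldsymbol{b}_{1,1}'}$, whose output is \eqref{tochu6_1}), and invoke the highest-weight conditions of Proposition \ref{prop:ht cond 1} (e.g.\ $\xb_3\ge x_3$) to evaluate those $\ve_i$'s. This is exactly what the paper does for $i=2$ before saying ``continue similarly.''

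However, your treatment of the middle segment $(r+1)^{\alpha}(r+2)^{\alpha}\cdots n^{\alpha}(n-1)^{\alpha}\cdots r^{\alpha}$ is wrong: it is not true that these operators act only on the second factor. By the time this segment is reached, the first factor is no longer $e_0^{2k-\mu_1}\mu$ (whose letters indeed do not exceed $r$) but its image under the ascending segment, and that image contains barred letters that have been pushed up to $\ol{r+1}$ --- one per column, hence $k$ of them. Since $e_i$ acts on $\ol{i}$ (sending it to $\ol{i+1}$) as well as on $i+1$, each level of the hump must act $\alpha'=\alpha|_{x_j=0}=k$ times on the first factor; this is forced anyway by the fact that $\boldsymbol{b}_{1,1}'$ already contains the hump with exponent $k$ and acts nontrivially on the first factor alone (each column needs one pass through $e_n:\ol{n}\mapsto n$ to turn one barred letter into the unbarred $2$ appearing in \eqref{tochu6_1}). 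Only the excess $\alpha-\alpha'=x_1+\sum_{j=3:\mathrm{odd}}^r\xb_j$ is absorbed by the second factor. If you carried out your plan as stated, applying all $\alpha$ operators to the second factor, the count would be off by $k$ at every level and the computation would collapse. (A related slip: the letters contributing $-$'s to the $i$-signature are $i+1$ and $\ol{i}$, not $\ol{i+1}$ and $i$; and the reduced combined signature is generally $-^{s_i}+^{B}$ with $B\ge0$, not a pure string of minus signs --- harmless for applying $e_i^{s_i}$, but worth stating correctly since the whole argument is signature bookkeeping.)
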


\begin{proof}
Let us consider the operation of $e_2^{s_2}=e_2^{2k-\mu_1+x_1}$ in
$e_{\boldsymbol{b}_{1,1}}$.
Since $\varphi_2(e_0^{2k-\mu_1}(\mu))=0$,
$e_2$ acts on the second component at most $\ve_2(\text{2nd comp})$ times 
and the rest goes to the first.
The 2-signature of the second component of (\ref{tochu6_2})
is $-^{x_1}+^{\bar{x}_3}-^{x_3}$.
{}From the highest condition for $\mu\otimes x$
we have $\bar{x}_3\geq x_3$, thus $e_2$ acts
on $\bar{2}^{x_1}$ only.
We can continue similarly and obtain the desired result.
\end{proof}

Finally, we consider the action of $e_{\boldsymbol{b}_{1,2}}$.
The 1-signature of Eq.$(\ref{tochu6_1})\otimes
\bar{1}^{x_2+\bar{x}_1}2^{l-x_2-\bar{x}_1}$
is $-^{2k-\mu_1}(+-)^{\mu_1/2}-^l$.
By applying $e_1^{2k+l-\mu_1}$, we get
\begin{align}
&(13\bar{2})^{\mu_1+\mu_3}
(1345\bar{2})^{\mu_5}\cdots
(1345\cdots (i-2)\bar{2})^{\mu_{i-2}}
(1345\cdots i\bar{2})^{\sum_{j=i}^{r}\mu_j-\mu_1/2}
\nonumber\\
&(1245\cdots i\bar{3})^{\mu_1/2-\sum_{j=i+1}^{r}\mu_j}
(1245\cdots (i+2)\bar{3})^{\mu_{i+2}}\cdots
(1245\cdots r\bar{3})^{\mu_r}
\otimes \bar{2}^{x_2+\bar{x}_1}1^{l-x_2-\bar{x}_1}.
\nonumber
\end{align}
The 2-signature of the above element is
$-^{2k-\mu_1}+^{\mu_1}-^{x_2+\bar{x}_1}$.
{}From the highest condition for $\mu\otimes x$
we have $\mu_1\geq x_2+\bar{x}_1$, thus
$e_2$ does not act on
$\bar{2}^{x_2+\bar{x}_1}1^{l-x_2-\bar{x}_1}$.
Therefore $e_2^{2k-\mu_1}$ acts on the first component and obtain
\begin{align}
&(12\bar{3})^{\mu_1+\mu_3}
(1245\bar{3})^{\mu_5}\cdots
(1245\cdots (i-2)\bar{3})^{\mu_{i-2}}
(1245\cdots i\bar{3})^{\sum_{j=i}^{r}\mu_j-\mu_1/2}
\nonumber\\
&(1245\cdots i\bar{3})^{\mu_1/2-\sum_{j=i+1}^{r}\mu_j}
(1245\cdots (i+2)\bar{3})^{\mu_{i+2}}\cdots
(1245\cdots r\bar{3})^{\mu_r}
\otimes \bar{2}^{x_2+\bar{x}_1}1^{l-x_2-\bar{x}_1}.
\nonumber
\end{align}
We can continue the computation and
arrive at Proposition \ref{reduction_odd}.

\subsubsection{Calculation in $B^{1,l}\otimes B^{r,k}$}
In this subsection, let $P$ and $P'$ be the $\pm$-diagrams.
As before, corresponding to $P$ and $P'$,
we use the parametrization $p_i^\ast$ and
$p_i'{}^\ast$ ($\ast =\cdot,+,-,\mp$) respectively.
Note that by definition $p_1^\mp=p_1'{}^\mp=0$.
Define a word $\boldsymbol{b}_2'=\boldsymbol{b}_{2,1}'
\boldsymbol{b}_{2,2}'$ by
\begin{align}
\boldsymbol{b}_{2,1}'=
&\,
2^{t_2}3^{t_3}\cdots r^{t_r}
(r+1)^{\beta}(r+2)^{\beta}\cdots n^{\beta}(n-1)^{\beta}
\cdots (r+1)^{\beta}r^{\beta}
\nonumber\\
&
(r-1)^{\bar{t}_{r-1}}(r-2)^{\bar{t}_{r-2}}\cdots
2^{\bar{t}_{2}},
\nonumber\\
\boldsymbol{b}_{2,2}'=
&\,
1^{\bar{\bar{t}}_1}
2^{\bar{\bar{t}}_2}3^{\bar{\bar{t}}_3}\cdots
(r-1)^{\bar{\bar{t}}_{r-1}}
r^{\beta'}(r+1)^{\beta'}\cdots
n^{\beta'}(n-1)^{\beta'}\cdots (r+1)^{\beta'}r^{\beta'}
\nonumber\\
&
(r-1)^{\bar{\bar{\bar{t}}}_{r-1}}
(r-2)^{\bar{\bar{\bar{t}}}_{r-2}}\cdots
2^{\bar{\bar{\bar{t}}}_{2}}.
\nonumber
\end{align}
Here the exponents for $\boldsymbol{b}_{2,1}'$
are
\begin{align*}
&t_2=k+\sum_{i=1:\mathrm{odd}}^r
(p_i^+-p_i^-)-p_3^\mp-p_1^+,\\
&t_{i+1}-t_i=
\begin{cases}
-p_{i+1}^+ & (\mbox{if } i \mbox{ is even})\\
-p_{i+2}^\mp & (\mbox{if } i \mbox{ is odd}),
\end{cases}\\
&\beta -t_r=-p_r^\cdot ,\qquad
\bar{t}_{r-1}-\beta =-p_r^+-p_{r-2}^\cdot ,\\
&\bar{t}_{i}-\bar{t}_{i+1}=
\begin{cases}
-p_{i+1}^+-p_{i-1}^\cdot & (\mbox{if } i \mbox{ is even})\\
0 & (\mbox{if } i \mbox{ is odd}).
\end{cases}
\end{align*}
The exponents for $\boldsymbol{b}_{2,2}'$
are
\begin{align*}
&\bar{\bar{t}}_1=2k-p^\mp_3-p_1^--p_1^+\\
&\bar{\bar{t}}_{i+1}-\bar{\bar{t}}_i=
\begin{cases}
-p^\mp_{i+3}-p^-_{i+1}-p^+_{i+1}-p^\cdot_{i-1}
& (\mbox{if } i \mbox{ is even})\\
0 & (\mbox{if } i \mbox{ is odd}),
\end{cases}\\
& \beta'-\bar{\bar{t}}_{r-1}=
\bar{\bar{\bar{t}}}_{r-1}-\beta'=
-p^\mp_{r+2}-p^-_{r}-p^+_{r}-p^\cdot_{r-2},\\
& \bar{\bar{\bar{t}}}_{i}-\bar{\bar{\bar{t}}}_{i+1}=
\begin{cases}
-p^\mp_{i+3}-p^-_{i+1}-p^+_{i+1}-p^\cdot_{i-1}
& (\mbox{if } i \mbox{ is even and }i\neq 2)\\
-p^\mp_{5}-p^-_{3}-p^+_{3}-p^+_1-p^\cdot_{1}
& (\mbox{if } i=2)\\
0 & (\mbox{if } i \mbox{ is odd}).
\end{cases}
\end{align*}

{}From $\boldsymbol{b}_2'$,
we define another word
$\boldsymbol{b}_2=\boldsymbol{b}_{2,1}
\boldsymbol{b}_{2,2}$ as follows:
\begin{align}
\boldsymbol{b}_{2,1}=
&\,
2^{t_2+l}3^{t_3+l}\cdots r^{t_r+l}
(r+1)^{\beta +l}(r+2)^{\beta +l}\cdots n^{\beta +l}(n-1)^{\beta +l}
\cdots (r+1)^{\beta +l}r^{\beta +l}
\nonumber\\
&
(r-1)^{\bar{t}_{r-1}+l}(r-2)^{\bar{t}_{r-2}+l}\cdots
2^{\bar{t}_{2}+l},
\nonumber\\
\boldsymbol{b}_{2,2}=
&\,
1^{\bar{\bar{t}}_1+l}
2^{\bar{\bar{t}}_2}3^{\bar{\bar{t}}_3}\cdots
(r-1)^{\bar{\bar{t}}_{r-1}}
r^{\beta'}(r+1)^{\beta'}\cdots
n^{\beta'}(n-1)^{\beta'}\cdots (r+1)^{\beta'}r^{\beta'}
\nonumber\\
&
(r-1)^{\bar{\bar{\bar{t}}}_{r-1}}
(r-2)^{\bar{\bar{\bar{t}}}_{r-2}}\cdots
2^{\bar{\bar{\bar{t}}}_{2}}.
\nonumber
\end{align}
Then the goal of this subsection is to prove the following proposition:

\begin{prop}\label{reduction_odd2}
We have 
\begin{align}
e_{\boldsymbol{b}_2}e_0^{\varepsilon_0(P)+l}(1^l\otimes P)
=1^l\otimes P'
\nonumber
\end{align}
where
\begin{align*}
{\varepsilon_0(P)}=&\,
\sum_j(p^\cdot_j +2p^+_j+p^\mp_j)-p_1^+,
\end{align*}
and $P'$ is related with $P$ as
\begin{align*}
&p_1'{}^\cdot =p_3^\mp +p_1^-,&
&p_3'{}^+=p_5^\mp +p_3^- +p_3^+ +p_1^\cdot +p_1^+,\\
&p_r'{}^+=p_r^\cdot +p_r^-+p_r^+ +p_{r-2}^\cdot ,&
&p_i'{}^+=p_{i+2}^\mp +p_i^-+p_i^++p_{i-2}^{\cdot},
\end{align*}
where $i$ is an odd integer such that $3<i<r$ 
and all other $p_i'{}^\ast =0$.
\end{prop}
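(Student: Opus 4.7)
The argument parallels that of Proposition~\ref{reduction_odd}, with the roles of the two tensor factors essentially interchanged. The first step is to compute $e_0^{\varepsilon_0(P)+l}(1^l\otimes P)$ explicitly. Since $\varepsilon_0(1^l)=l$ and $\varphi_0(1^l)=0$, the tensor-product rule forces $e_0$ to act initially on $P$ until its evolving $\varepsilon_0$ drops to the growing $\varphi_0$ of the left factor, at which point the action transfers to the left, converting $1$'s into $\bar{2}$'s. Using $e_0=\sigma\circ e_1\circ\sigma$, the description of $\sigma$ via the involution $\mathfrak{S}$ on $\pm$-diagrams, and Proposition~\ref{prop:e1 action}, I would write the result as a concrete tensor product, obtaining en route the stated identity $\varepsilon_0(P)=\sum_j(p_j^\cdot+2p_j^++p_j^\mp)-p_1^+$.

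Next, I would apply the subword $\boldsymbol{b}_{2,1}$. This is engineered so that the exponents $t_i+l$, $\beta+l$, $\bar{t}_i+l$ are precisely the $i$-signatures at each intermediate stage, and so that each $e_i^{t_i+l}$ acts maximally with no $+-$ cancellation. The extra summand $+l$ in every exponent is present exactly to reverse the migration of letters out of the first factor caused by $e_0$: the $\bar{2}$'s accumulated on the left are sent back via $\bar{3},\bar{4},\ldots$ around through index $n$ and back to $1$. I would verify the signature-matching column-by-column in the Japanese reading word, paralleling the bookkeeping that established Lemma~\ref{lem:special1}.

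The subword $\boldsymbol{b}_{2,2}$ then restores the left factor to $1^l$ and completes the passage to a $J$-highest element. By Proposition~\ref{prop:ht cond 2} the outcome is automatically of the form $1^l\otimes\Phi(P')$, and one reads off the parameters $p_i'{}^\ast$ directly from the column heights and signs of the residual $\pm$-diagram. The specific formulas $p_1'{}^\cdot=p_3^\mp+p_1^-$, $p_3'{}^+=p_5^\mp+p_3^-+p_3^++p_1^\cdot+p_1^+$, $p_r'{}^+=p_r^\cdot+p_r^-+p_r^++p_{r-2}^\cdot$, and $p_i'{}^+=p_{i+2}^\mp+p_i^-+p_i^++p_{i-2}^\cdot$ for odd $3<i<r$ emerge by matching the heights and signs of the columns produced from each of the four column types ($\cdot,+,-,\mp$) of $P$ under this procedure.

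The principal obstacle is the signature bookkeeping: at each intermediate step one must verify (i) that the current $i$-signature has the form $-^{t_i+l}$ with no trailing $+$, so that $e_i^{t_i+l}$ produces no zero, and (ii) that $e_i^{t_i+l}$ is split between the two tensor factors exactly as the final answer requires. This is more delicate than in Proposition~\ref{reduction_odd} because $P$ carries four kinds of columns rather than the single type appearing in the $\mu$-tableau, leading to several sub-cases; as in the earlier proof, I would handle a parity assumption on a suitable column-count explicitly and invoke symmetry for the opposite parity.
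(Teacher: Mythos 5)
Your outline follows the paper's proof in its essentials: compute $e_0^{\ve_0(P)+l}(1^l\ot P)$ via the signature rule and $e_0=\sigma\circ e_1\circ\sigma$, extract $\ve_0(P)$ from the pairing algorithm of Proposition \ref{prop:e1 action} applied to the pair of $\pm$-diagrams, and then show that $\boldsymbol{b}_{2,1}$, $\boldsymbol{b}_{2,2}$ act maximally, with the extra $+l$ in the exponents carrying the left factor from $\overline{2}^l$ back to $1^l$ along the path $\overline{2}\to\overline{3}\to\cdots\to n\to\cdots\to 1$. Two remarks on where your plan diverges from (or falls short of) the paper. First, the paper makes the two factors separate cleanly from the outset, writing $e_0^{\ve_0(P)+l}(1^l\ot P)=\overline{2}^l\ot e_0^{\ve_0(P)}(P)$ and reducing everything to the single-factor identity $e_{\boldsymbol{b}_2'}e_0^{\ve_0(P)}(P)=P'$; and the signature bookkeeping you propose to carry out column-by-column on reading words is instead obtained by applying Proposition \ref{prop:to highest} to the inner and then the outer $\pm$-diagram of the pair representing $\mathfrak{S}\circ e_1^{\ve_0(P)}\circ\mathfrak{S}(P)$ --- that proposition delivers the maximal word (hence all the exponents $t_i,\bar t_i,\bar{\bar t}_i,\bar{\bar{\bar t}}_i$) directly from the column counts $c_i,c_i^\pm$, which is what makes the verification tractable; no parity case-split is needed in this part. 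Second, and more seriously, the appeal to Proposition \ref{prop:ht cond 2} to conclude that the outcome is ``automatically'' of the form $1^l\ot\Phi(P')$ does not work as stated: that proposition characterizes which elements are $I_0$-highest, but whether the element you have produced is of that shape is precisely what must be proved, not assumed. In the paper the word $\boldsymbol{b}_{2,2}'$ deliberately stops short of the $I_0$-highest element $\tilde\mu$ by $e_1^{p_3^\mp+p_1^-}$, so the result is $f_1^{p_3^\mp+p_1^-}(\tilde\mu)=\Phi(P')$ with $p_1'{}^\cdot=p_3^\mp+p_1^-$, and the compatibility with $1^l$ on the left (that $e_1$ annihilates $1^l\ot P'$, using $l\ge p_3^\mp+p_1^-$ from the highest-weight condition on $1^l\ot P$) is checked separately rather than invoked from Proposition \ref{prop:ht cond 2}.
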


Since $e_0^{\varepsilon_0(P)+l}(1^l\otimes P)=
\bar{2}^l\otimes e_0^{\varepsilon_0(P)}(P)$,
this proposition is the consequence of the
following lemma:

\begin{lemma}
With the notations of Proposition \ref{reduction_odd2},
we have
$e_{\boldsymbol{b}_2'}e_0^{\varepsilon_0(P)}(P)=P'$.
\end{lemma}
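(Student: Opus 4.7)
The strategy parallels the three-step proof of Proposition \ref{prop_special}, generalised to a $\pm$-diagram carrying all four symbols $\cdot,+,-,\mp$ simultaneously. First, I unfold $e_0=\sigma\circ e_1\circ\sigma$ via \eqref{eq:sigma}, so that
\[
e_0^{\varepsilon_0(P)}\Phi(P)=\sigma\bigl(e_1^{\varepsilon_0(P)}\Phi(\mathfrak{S}(P))\bigr).
\]
Using the explicit combinatorial description of $\mathfrak{S}$ I read off the $\pm$-diagram $\mathfrak{S}(P)$, compute its $1$-signature by Proposition \ref{prop:e1 action} applied to the pair $(\mathfrak{S}(P),\emptyset)$, and check that $\varepsilon_0(P)$ equals the stated expression $\sum_j(p_j^\cdot+2p_j^++p_j^\mp)-p_1^+$. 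Applying $e_1^{\varepsilon_0(P)}$ produces a concrete $J$-highest element, and pushing it back through $f_{\mathrm{Rev}(\boldsymbol{a})}$ gives $e_0^{\varepsilon_0(P)}\Phi(P)$ as a KN tableau whose Japanese reading word splits into blocks indexed by the four statistics $p_i^\ast$. This step is a direct generalisation of the analogues of Lemmas \ref{lem:special1} and \ref{lem:special2}.

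Second, I apply the subwords of $\boldsymbol{b}_2'$ in order. The exponents $t_i,\bar t_i,\bar{\bar t}_i,\bar{\bar{\bar t}}_i$ have been telescoped so that at each stage the current $i$-signature reduces to the simple form $-^{\mathrm{exp}}$ (no surviving $+$), making every $e_i^{\mathrm{exp}}$ maximal in the sense of Proposition \ref{prop:to highest}. The ascending segment $2^{t_2}3^{t_3}\cdots r^{t_r}$ acts on the blocks produced in step one; the ladder $(r{+}1)^\beta\cdots n^\beta(n{-}1)^\beta\cdots r^\beta$ then performs a second combinatorial flip on the $p_i^\cdot$-columns, analogous to the $\Phi\circ\mathfrak{S}\circ\Phi^{-1}$ of the first step; the descending segment $(r{-}1)^{\bar t_{r-1}}\cdots 2^{\bar t_2}$ redistributes the resulting barred letters to their new heights; and finally $\boldsymbol{b}_{2,2}'$ plays the analogous role one level down, converting the remaining tableau into $\Phi(P')$. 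Reading off the outer shape, inner shape and symbol multiplicities of this tableau yields the transition laws $p_1'{}^\cdot=p_3^\mp+p_1^-$, $p_3'{}^+=p_5^\mp+p_3^-+p_3^++p_1^\cdot+p_1^+$, $p_r'{}^+=p_r^\cdot+p_r^-+p_r^++p_{r-2}^\cdot$, and the intermediate formulas for $3<i<r$.

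The main obstacle is the signature bookkeeping in step two: for each of the $O(r)$ successive applications $e_i^{\mathrm{exp}}$ one must verify that the contributions to $\varepsilon_i$ and $\varphi_i$ coming from the four block types at heights $i-2,i-1,i,i+1$ assemble into a reduced signature $-^{\mathrm{exp}}$ with no surviving $+$. The cleanest organisation is a double induction: on the rising index $i$ within $\boldsymbol{b}_{2,1}'$, and on the falling index within $\boldsymbol{b}_{2,2}'$, where the roles played by $p_i^\mp$ and $p_i^\cdot$ are effectively swapped exactly as in the passage between the $e_1$- and $e_0$-actions in the proof of Lemma \ref{lem:special1}. Once maximality of every step is verified, the algebraic identities among the exponents $t_i,\bar t_i,\bar{\bar t}_i,\bar{\bar{\bar t}}_i$ reduce to telescoping sums in the $p_i^\ast$, and the lemma follows.
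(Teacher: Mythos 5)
Your overall strategy (unfold $e_0=\sigma\circ e_1\circ\sigma$, compute $\varepsilon_0(P)$ by the pairing rule, then check that each block of $\boldsymbol{b}_2'$ acts maximally) is the same as the paper's, but two concrete points would derail the execution as written. First, the pair you feed into Proposition \ref{prop:e1 action} is wrong: the $J$-highest element $\Phi(\mathfrak{S}(P))$ corresponds to a pair $(\mathfrak{S}(P),p)$ in which the inner $\pm$-diagram $p$ is \emph{not} empty --- its columns carry $+$'s, and $\mp$ pairs sit over the $p_i^\cdot$ columns. With $p=\emptyset$ the pairing algorithm returns only the number of $-$'s in $\mathfrak{S}(P)$, which can never produce the coefficient $2$ in the term $2p_j^+$ of $\varepsilon_0(P)=\sum_j(p_j^\cdot+2p_j^++p_j^\mp)-p_1^+$; already for $k=1$ and a single column of type $p_j^+$ your recipe gives $1$ instead of the correct $2$ (one unpaired $+$ in $p$ plus one unpaired $-$ in $\mathfrak{S}(P)$). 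The schematic pair of diagrams in the paper's proof exists precisely to get this count right.

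Second, the claim that $\boldsymbol{b}_{2,2}'$ ``converts the remaining tableau into $\Phi(P')$'' with every step maximal hides the one place where maximality deliberately fails. The paper applies Proposition \ref{prop:to highest} to the outer diagram of $\mathfrak{S}\circ e_1^{\varepsilon_0(P)}\circ\mathfrak{S}(P)$ and finds that the canonical raising word agrees with $\boldsymbol{b}_{2,2}'$ \emph{except} that it ends with an extra factor $1^{a_1'}$ with $a_1'=p_3^\mp+p_1^-$. Hence $e_{\boldsymbol{b}_2'}e_0^{\varepsilon_0(P)}(P)=f_1^{p_3^\mp+p_1^-}(\tilde{\mu})$ rather than the highest weight element $\tilde{\mu}$, and it is exactly these $p_3^\mp+p_1^-$ applications of $f_1$ that create the empty height-one columns $p_1'{}^\cdot=p_3^\mp+p_1^-$ of $P'$. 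A computation in which the output of $\boldsymbol{b}_{2,2}'$ is highest weight would produce the wrong $P'$. Finally, note that the paper sidesteps your reading-word and signature bookkeeping entirely: since $\mathfrak{S}$ commutes with $e_i$ for $i\ge 2$, the second $\mathfrak{S}$ is applied directly to the pair of $\pm$-diagrams, and Proposition \ref{prop:to highest} identifies $\boldsymbol{b}_{2,1}'$ and $\boldsymbol{b}_{2,2}'$ as the raising words of the inner and outer diagrams respectively, which yields the maximality of each step for free. Your route is not unworkable in principle, but without these two corrections it computes the wrong $\varepsilon_0(P)$ and the wrong image.
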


The rest of this subsection is devoted to the proof
of this lemma. To begin with, we observe the following:

\begin{lemma}
${\varepsilon_0(P)}=
\sum_j(p^\cdot_j +2p^+_j+p^\mp_j)-p_1^+.$
\end{lemma}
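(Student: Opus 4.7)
The plan is to pass through the involution $\sigma$ and invoke Proposition~\ref{prop:to highest}. From \eqref{eq:0 action} one has $\varepsilon_0(P) = \varepsilon_1(\sigma(P))$; since $\sigma$ commutes with $e_j, f_j$ for $j \in J$, the element $\sigma(P)$ is still $J$-highest, and by \eqref{eq:sigma} its associated $\pm$-diagram is $\mathfrak{S}(P)$. Thus the lemma reduces to the identity $\varepsilon_1(\mathfrak{S}(P)) = \sum_j(p_j^\cdot + 2p_j^+ + p_j^\mp) - p_1^+$.

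To read off $\varepsilon_1$ of an arbitrary $\pm$-diagram $Q$ I would use Proposition~\ref{prop:to highest}: the maximality assertion there says precisely that $\varepsilon_1(Q) = a_1(Q) = (c_1 - c_1^+) + \sum_{j=2}^n(c_j + c_j^- - c_j^+)$. Rewriting the column counts via $c_j = q_j^\cdot + q_j^+ + q_j^- + q_j^\mp$, $c_j^+ = q_j^+ + q_{j+1}^\mp$, $c_j^- = q_j^- + q_j^\mp$ and telescoping the adjacent $\mp$ contributions yields the cleaner expression
\[
\varepsilon_1(Q) = \sum_{j \geq 1} q_j^\cdot + q_1^- + 2\sum_{j \geq 2} q_j^- + \sum_{j \geq 1} q_j^\mp.
\]

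It then remains to substitute $Q = \mathfrak{S}(P)$ and simplify. Since $r$ is odd and the outer shape of every $P \in B^{r,k}$ consists of columns whose heights share the parity of $r$, only odd-indexed $p_j^\ast$ are nonzero. The description of $\mathfrak{S}$ supplies $\tilde p_j^+ = p_j^-$ and $\tilde p_j^- = p_j^+$ at each odd $j$ with $1 \leq j \leq r$, together with $\tilde p_i^\cdot = p_{i+2}^\mp$ and $\tilde p_{i+2}^\mp = p_i^\cdot$ at each odd $i$ with $1 \leq i \leq r-2$, while $\tilde p_r^\cdot = p_r^\cdot$ stays untouched. Plugging these in, the block $q_1^- + 2\sum_{j \geq 2} q_j^-$ becomes $2\sum_j p_j^+ - p_1^+$; the sum $\sum_j \tilde p_j^\cdot$ becomes $p_r^\cdot + \sum_j p_j^\mp$; and $\sum_j \tilde p_j^\mp$ becomes $\sum_j p_j^\cdot - p_r^\cdot$. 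Adding these three contributions gives $\sum_j p_j^\cdot + 2\sum_j p_j^+ + \sum_j p_j^\mp - p_1^+$, as claimed.

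The main obstacle is bookkeeping rather than conceptual: one must carefully track the parity shift induced by the $q_{j+1}^\mp$ piece of $c_j^+$ and handle the boundary terms at $j = 1$ (where $p_1^\mp = 0$ by convention, so no unwanted contribution arises at the bottom) and at $j = r$ (where $\mathfrak{S}$ does not act, since its defining range is $i < r$), which is precisely what forces the asymmetric $-p_1^+$ correction in the final formula.
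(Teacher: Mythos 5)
Your argument is correct. It shares the paper's first step exactly --- reduce $\varepsilon_0(\Phi(P))$ to $\varepsilon_1$ of the element whose $\pm$-diagram is $\mathfrak{S}(P)$ --- but extracts that $\varepsilon_1$ by a different tool. The paper realizes $\mathfrak{S}(P)$ as a pair of $\pm$-diagrams (the outer diagram together with the all-$+$ inner diagram of its inner shape), runs the pairing algorithm of Proposition \ref{prop:e1 action}, and reads the count of unpaired symbols off a schematic picture; since that proposition only describes a single application of $e_1$, the picture implicitly carries the iteration. You instead invoke the maximality clause of Proposition \ref{prop:to highest}, which gives the closed formula $\varepsilon_1=a_1=(c_1-c_1^+)+\sum_{j\ge 2}(c_j+c_j^--c_j^+)$ for any $\pm$-diagram whose depth satisfies the stated bound (as it does here, the outer shape having depth at most $r$), and then translate $c_j,c_j^{\pm}$ into the $p_j^{\ast}$ statistics via $c_j=q_j^\cdot+q_j^++q_j^-+q_j^\mp$, $c_j^+=q_j^++q_{j+1}^\mp$, $c_j^-=q_j^-+q_j^\mp$ before applying the $\mathfrak{S}$-substitution. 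I checked the telescoping and the three partial sums: they do add up to $\sum_j(p_j^\cdot+2p_j^++p_j^\mp)-p_1^+$, and your boundary bookkeeping ($p_1^\mp=0$, $\tilde p_r^\cdot=p_r^\cdot$ because $\mathfrak{S}$ only acts on inner heights $i<r$, and the parity restriction for odd $r$) is exactly the right set of checks. The trade-off is that you avoid the iterated pairing argument at the cost of the $c_j\leftrightarrow p_j^\ast$ conversion; both routes rest only on results already established in Section 2.
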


\begin{proof}
We use Proposition \ref{prop:e1 action}.
Schematically, the pair of $\pm$-diagrams
corresponding to $\mathfrak{S}(P)$
looks as follows:
\begin{align}
\unitlength 10pt
\begin{picture}(36,12.5)
\color[cmyk]{0,0,0,0.2}
\put(2,9){\rule{20pt}{20pt}}
\put(6,8){\rule{20pt}{20pt}}
\put(13,5){\rule{20pt}{20pt}}
\put(17,4){\rule{20pt}{20pt}}
\put(21,3){\rule{20pt}{20pt}}
\put(28,0){\rule{20pt}{20pt}}
\color{black}
\thicklines
\put(0,0){\line(1,0){36}}
\put(0,0){\line(0,1){11}}
\put(0,11){\line(1,0){8}}
\put(8,11){\line(0,-1){2}}
\put(8,9){\line(1,0){2}}
\multiput(10,9)(0.3,-0.1){10}{\circle*{0.1}}
\put(13,8){\line(1,0){2}}
\put(15,8){\line(0,-1){2}}
\put(15,6){\line(1,0){8}}
\put(23,6){\line(0,-1){2}}
\put(23,4){\line(1,0){2}}
\multiput(25,4)(0.3,-0.1){10}{\circle*{0.1}}
\put(28,3){\line(1,0){2}}
\put(30,3){\line(0,-1){2}}
\put(30,1){\line(1,0){6}}
\put(36,1){\line(0,-1){1}}
\thinlines
\put(2,11){\line(0,-1){1}}
\put(2,10){\line(1,0){4}}
\put(6,10){\line(0,-1){1}}
\put(6,9){\line(1,0){2}}
%
\put(13,6){\line(1,0){2}}
\put(17,6){\line(0,-1){1}}
\put(17,5){\line(1,0){4}}
\put(21,5){\line(0,-1){1}}
\put(21,4){\line(1,0){2}}
\put(28,1){\line(1,0){2}}
\put(32,1){\line(0,-1){1}}
\put(0.2,10.3){$+$}
\put(1.2,10.3){$+$}
\put(2.2,10.3){$+$}
\put(3.2,10.3){$+$}
\put(4.2,10.3){$-$}
\put(5.2,10.3){$-$}
\put(6.2,10.3){$-$}
\put(7.1,10.3){$-$}
\put(2.2,9.3){$+$}
\put(3.2,9.3){$+$}
\put(4.2,9.3){$+$}
\put(5.2,9.3){$+$}
\put(6.2,9.3){$+$}
\put(7.1,9.3){$+$}
\put(6.2,8.3){$+$}
\put(7.1,8.3){$+$}
\put(8.2,8.3){$+$}
\put(9.2,8.3){$+$}
\put(13.2,7.3){$-$}
\put(14.1,7.3){$-$}
\put(13.2,6.3){$+$}
\put(14.1,6.3){$+$}
\put(13.2,5.3){$+$}
\put(14.1,5.3){$+$}
\put(15.2,5.3){$+$}
\put(16.2,5.3){$+$}
\put(17.2,5.3){$+$}
\put(18.2,5.3){$+$}
\put(19.2,5.3){$-$}
\put(20.2,5.3){$-$}
\put(21.2,5.3){$-$}
\put(22.2,5.3){$-$}
\put(17.2,4.3){$+$}
\put(18.2,4.3){$+$}
\put(19.2,4.3){$+$}
\put(20.2,4.3){$+$}
\put(21.2,4.3){$+$}
\put(22.2,4.3){$+$}
\put(21.2,3.3){$+$}
\put(22.2,3.3){$+$}
\put(23.2,3.3){$+$}
\put(24.2,3.3){$+$}
\put(28.2,2.3){$-$}
\put(29.2,2.3){$-$}
\put(28.2,1.3){$+$}
\put(29.2,1.3){$+$}
\put(28.2,0.3){$+$}
\put(29.2,0.3){$+$}
\put(30.2,0.3){$+$}
\put(31.2,0.3){$+$}
\put(32.2,0.3){$+$}
\put(33.2,0.3){$+$}
\put(34.2,0.3){$-$}
\put(35.2,0.3){$-$}
\put(0.5,11.6){$p_r^\cdot$}
\put(2.5,11.6){$p_r^-$}
\put(4.5,11.6){$p_r^+$}
\put(6.0,11.6){$p_{r-2}^\cdot$}
\put(8.5,9.6){$p_{r}^\mp$}
\put(13.5,8.6){$p_i^\cdot$}
\put(15.2,6.6){$p_{i+2}^\mp$}
\put(17.5,6.6){$p_i^-$}
\put(19.5,6.6){$p_i^+$}
\put(21.1,6.6){$p_{i-2}^\cdot$}
\put(23.5,4.6){$p_i^\mp$}
\put(28.6,3.6){$p_1^\cdot$}
\put(30.5,1.6){$p_3^\mp$}
\put(32.6,1.6){$p_1^-$}
\put(34.6,1.6){$p_1^+$}
\end{picture}
\nonumber
\end{align}
Here the thick lines represent outer shape of
$\mathfrak{S}(P)$ and the thin lines represent
the inner $\pm$-diagram.
(Since we are to consider the $e_1$ action, we need such a pair of $\pm$-diagrams.)
The numbers $p_i^\ast$ represent the numbers of
columns which have the same pattern of $+$ and $-$
indicated below $p_i^\ast$.
According to Proposition \ref{prop:e1 action},
we make pairs of two $+$ symbols which we indicate
by gray squares in the diagram.
Then we see that we can apply $e_1$ up to
$\sum_j(p^\cdot_j +2p^+_j+p^\mp_j)-p_1^+$ times,
which gives the value for $\varepsilon_0(P)$.
The pair of $\pm$-diagrams corresponding to
$e_1^{\varepsilon_0(P)}\circ\mathfrak{S}(P)$ looks as follows:
\begin{align}
\unitlength 10pt
\begin{picture}(36,12.5)
\thicklines
\put(0,0){\line(1,0){36}}
\put(0,0){\line(0,1){11}}
\put(0,11){\line(1,0){8}}
\put(8,11){\line(0,-1){2}}
\put(8,9){\line(1,0){2}}
\multiput(10,9)(0.3,-0.1){10}{\circle*{0.1}}
\put(13,8){\line(1,0){2}}
\put(15,8){\line(0,-1){2}}
\put(15,6){\line(1,0){8}}
\put(23,6){\line(0,-1){2}}
\put(23,4){\line(1,0){2}}
\multiput(25,4)(0.3,-0.1){10}{\circle*{0.1}}
\put(28,3){\line(1,0){2}}
\put(30,3){\line(0,-1){2}}
\put(30,1){\line(1,0){6}}
\put(36,1){\line(0,-1){1}}
\thinlines
\put(0,10){\line(1,0){2}}
\put(2,10){\line(1,0){4}}
\put(6,10){\line(1,0){2}}
\put(8,9){\line(0,-1){1}}
\put(8,8){\line(1,0){2}}
%
\put(13,7){\line(1,0){2}}
\put(15,6){\line(0,-1){1}}
\put(15,5){\line(1,0){6}}
\put(21,5){\line(1,0){2}}
\put(23,4){\line(0,-1){1}}
\put(23,3){\line(1,0){2}}
\put(28,2){\line(1,0){2}}
\put(32,1){\line(0,-1){1}}
\put(0.2,10.3){$+$}
\put(1.2,10.3){$+$}
\put(2.2,10.3){$+$}
\put(3.2,10.3){$+$}
\put(4.2,10.3){$+$}
\put(5.2,10.3){$+$}
\put(6.2,10.3){$+$}
\put(7.1,10.3){$+$}
\put(2.2,9.3){$+$}
\put(3.2,9.3){$+$}
\put(4.2,9.3){$-$}
\put(5.2,9.3){$-$}
\put(6.2,9.3){$-$}
\put(7.1,9.3){$-$}
\put(6.2,8.3){$+$}
\put(7.1,8.3){$+$}
\put(8.2,8.3){$+$}
\put(9.2,8.3){$+$}
\put(13.2,7.3){$+$}
\put(14.1,7.3){$+$}
\put(13.2,6.3){$-$}
\put(14.1,6.3){$-$}
\put(13.2,5.3){$+$}
\put(14.1,5.3){$+$}
\put(15.2,5.3){$+$}
\put(16.2,5.3){$+$}
\put(17.2,5.3){$+$}
\put(18.2,5.3){$+$}
\put(19.2,5.3){$+$}
\put(20.2,5.3){$+$}
\put(21.2,5.3){$+$}
\put(22.2,5.3){$+$}
\put(17.2,4.3){$+$}
\put(18.2,4.3){$+$}
\put(19.2,4.3){$-$}
\put(20.2,4.3){$-$}
\put(21.2,4.3){$-$}
\put(22.2,4.3){$-$}
\put(21.2,3.3){$+$}
\put(22.2,3.3){$+$}
\put(23.2,3.3){$+$}
\put(24.2,3.3){$+$}
\put(28.2,2.3){$+$}
\put(29.2,2.3){$+$}
\put(28.2,1.3){$-$}
\put(29.2,1.3){$-$}
\put(28.2,0.3){$+$}
\put(29.2,0.3){$+$}
\put(30.2,0.3){$-$}
\put(31.2,0.3){$-$}
\put(32.2,0.3){$+$}
\put(33.2,0.3){$+$}
\put(34.2,0.3){$+$}
\put(35.2,0.3){$+$}
\put(0.5,11.6){$p_r^\cdot$}
\put(2.5,11.6){$p_r^-$}
\put(4.5,11.6){$p_r^+$}
\put(6.0,11.6){$p_{r-2}^\cdot$}
\put(8.5,9.6){$p_{r}^\mp$}
\put(13.5,8.6){$p_i^\cdot$}
\put(15.2,6.6){$p_{i+2}^\mp$}
\put(17.5,6.6){$p_i^-$}
\put(19.5,6.6){$p_i^+$}
\put(21.1,6.6){$p_{i-2}^\cdot$}
\put(23.5,4.6){$p_i^\mp$}
\put(28.6,3.6){$p_1^\cdot$}
\put(30.5,1.6){$p_1^+$}
\put(32.6,1.6){$p_3^\mp$}
\put(34.6,1.6){$p_1^-$}
\end{picture}
\nonumber
\end{align}
Note that the numbers of columns of height 1 have changed
from $p_3^\mp$, $p_1^-$, $p_1^+$ to
$p_1^+$, $p_3^\mp$, $p_1^-$.
\end{proof}

In order to compute $e_0^{\varepsilon_0(P)}(P)$,
we usually make $e_1^{\varepsilon_0(P)}\circ\mathfrak{S}(P)$ into
$\{2,3,\cdots,n\}$-highest by applying suitable $e_{\boldsymbol{a}}$, apply $\mathfrak{S}$
and then apply $f_{\mathrm{Rev}(\boldsymbol{a})}$ (see \eqref{eq:sigma}).
However, since $\mathfrak{S}$ commutes with
the action of $e_i$ $(i=2,3,\cdots,n)$, we can apply
$\mathfrak{S}$ on the pair of $\pm$-diagrams directly. Namely, $\mathfrak{S}$ changes
the outer $\pm$-diagram only.
The pair of $\pm$-diagrams corresponding to
$\mathfrak{S}\circ e_1^{\varepsilon_0(P)}\circ\mathfrak{S}(P)$
looks as follows:
\begin{align}\label{pair+-diagram}
\unitlength 10pt
\begin{picture}(36,12.5)
\thicklines
\put(0,0){\line(1,0){36}}
\put(0,0){\line(0,1){11}}
\put(0,11){\line(1,0){8}}
\put(8,11){\line(0,-1){2}}
\put(8,9){\line(1,0){2}}
\multiput(10,9)(0.3,-0.1){10}{\circle*{0.1}}
\put(13,8){\line(1,0){2}}
\put(15,8){\line(0,-1){2}}
\put(15,6){\line(1,0){8}}
\put(23,6){\line(0,-1){2}}
\put(23,4){\line(1,0){2}}
\multiput(25,4)(0.3,-0.1){10}{\circle*{0.1}}
\put(28,3){\line(1,0){2}}
\put(30,3){\line(1,0){2}}
\put(32,3){\line(0,-1){2}}
\put(32,1){\line(1,0){4}}
\put(36,1){\line(0,-1){1}}
\thinlines
\put(0,10){\line(1,0){2}}
\put(2,10){\line(1,0){4}}
\put(6,10){\line(1,0){2}}
\put(8,9){\line(0,-1){1}}
\put(8,8){\line(1,0){2}}
%
\put(13,7){\line(1,0){2}}
\put(15,6){\line(0,-1){1}}
\put(15,5){\line(1,0){6}}
\put(21,5){\line(1,0){2}}
\put(23,4){\line(0,-1){1}}
\put(23,3){\line(1,0){2}}
\put(28,2){\line(1,0){2}}
\put(30,2){\line(0,-1){1}}
\put(30,1){\line(1,0){2}}
\put(32,1){\line(0,-1){1}}
\put(0.2,10.3){$-$}
\put(1.2,10.3){$-$}
\put(2.2,10.3){$-$}
\put(3.2,10.3){$-$}
\put(4.2,10.3){$-$}
\put(5.2,10.3){$-$}
\put(6.2,10.3){$-$}
\put(7.1,10.3){$-$}
\put(2.2,9.3){$+$}
\put(3.2,9.3){$+$}
\put(4.2,9.3){$-$}
\put(5.2,9.3){$-$}
\put(6.2,9.3){$-$}
\put(7.1,9.3){$-$}
\put(6.2,8.3){$+$}
\put(7.1,8.3){$+$}
\put(8.2,8.3){$-$}
\put(9.2,8.3){$-$}
\put(13.2,7.3){$-$}
\put(14.1,7.3){$-$}
\put(13.2,6.3){$-$}
\put(14.1,6.3){$-$}
\put(13.2,5.3){$+$}
\put(14.1,5.3){$+$}
\put(15.2,5.3){$-$}
\put(16.2,5.3){$-$}
\put(17.2,5.3){$-$}
\put(18.2,5.3){$-$}
\put(19.2,5.3){$-$}
\put(20.2,5.3){$-$}
\put(21.2,5.3){$-$}
\put(22.2,5.3){$-$}
\put(17.2,4.3){$+$}
\put(18.2,4.3){$+$}
\put(19.2,4.3){$-$}
\put(20.2,4.3){$-$}
\put(21.2,4.3){$-$}
\put(22.2,4.3){$-$}
\put(21.2,3.3){$+$}
\put(22.2,3.3){$+$}
\put(23.2,3.3){$-$}
\put(24.2,3.3){$-$}
\put(28.2,2.3){$-$}
\put(29.2,2.3){$-$}
\put(30.2,2.3){$-$}
\put(31.2,2.3){$-$}
\put(28.2,1.3){$-$}
\put(29.2,1.3){$-$}
\put(30.2,1.3){$+$}
\put(31.2,1.3){$+$}
\put(28.2,0.3){$+$}
\put(29.2,0.3){$+$}
\put(30.2,0.3){$-$}
\put(31.2,0.3){$-$}
\put(32.2,0.3){$-$}
\put(33.2,0.3){$-$}
\put(34.2,0.3){$-$}
\put(35.2,0.3){$-$}
\put(0.5,11.6){$p_r^\cdot$}
\put(2.5,11.6){$p_r^-$}
\put(4.5,11.6){$p_r^+$}
\put(6.0,11.6){$p_{r-2}^\cdot$}
\put(8.5,9.6){$p_{r}^\mp$}
\put(13.5,8.6){$p_i^\cdot$}
\put(15.2,6.6){$p_{i+2}^\mp$}
\put(17.5,6.6){$p_i^-$}
\put(19.5,6.6){$p_i^+$}
\put(21.1,6.6){$p_{i-2}^\cdot$}
\put(23.5,4.6){$p_i^\mp$}
\put(28.6,3.6){$p_1^\cdot$}
\put(30.5,3.6){$p_1^+$}
\put(32.6,1.6){$p_3^\mp$}
\put(34.6,1.6){$p_1^-$}
\end{picture}
\end{align}
Note that the outer shape has also been changed
at $p_1^+$.

\begin{lemma}
The inner $\pm$-diagram of
$e_{\boldsymbol{b}_{2,1}'}\circ
\mathfrak{S}\circ e_1^{\varepsilon_0(P)}\circ\mathfrak{S}(P)$
is of highest type, i.e., each column contains
$+$ as symbol.
\end{lemma}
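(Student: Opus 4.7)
The strategy is to recognize $\boldsymbol{b}_{2,1}'$ as the word given by Proposition \ref{prop:to highest}, applied not to the outer $\pm$-diagram but to the inner one in the $X_{n-1}$-setting. The shape of the word --- ascending $2^{t_2}3^{t_3}\cdots r^{t_r}$, then passing through the ``spin region'' $(r{+}1)^\beta\cdots n^\beta(n{-}1)^\beta\cdots r^\beta$, then descending $(r{-}1)^{\bar{t}_{r-1}}\cdots 2^{\bar{t}_2}$ --- is precisely the $X_{n-1}$-analogue of the pattern $1^{a_1}2^{a_2}\cdots n^{\gamma a_n}(n{-}1)^{\gamma'a'_{n-1}}\cdots 1^{a'_1}$ in that proposition, with the lowest index shifted up by one because $e_1$ has already been absorbed into the pair-of-$\pm$-diagrams formalism and we now act only by $e_i$ for $i\ge 2$.

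First, I would read off the inner $\pm$-diagram of the pair depicted in \eqref{pair+-diagram} explicitly: its outer shape is the thin-line boundary, and its signs are those appearing inside that boundary. Each statistic (column heights, and the numbers of $\cdot$, $+$, $-$, $\mp$ at each height) should be expressed purely in terms of the original data $p_i^\cdot, p_i^-, p_i^+, p_i^\mp$, by tracking how $\mathfrak{S}\circ e_1^{\varepsilon_0(P)}\circ\mathfrak{S}$ rearranges them (using Proposition \ref{prop:e1 action} for the middle step and the explicit definition of $\mathfrak{S}$ for the outer ones).

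Second, I would apply Proposition \ref{prop:to highest} to this inner $\pm$-diagram in the $X_{n-1}$-setting, producing a word whose exponents $a_i, a'_i$ are given by the standard formulas in terms of the column statistics. Then I would verify, step by step, that these match the exponents $t_i$, $\bar{t}_i$, $\beta$ defining $\boldsymbol{b}_{2,1}'$. The telescoping relations in the definition of $\boldsymbol{b}_{2,1}'$, such as $t_{i+1}-t_i=-p_{i+1}^+$ for $i$ even and $-p_{i+2}^\mp$ for $i$ odd (and the symmetric identities for $\bar{t}_i$), are exactly the incremental contributions predicted by the column statistics of the inner diagram of \eqref{pair+-diagram}, once the case distinction by the parity of the height (controlling whether a column carries $\mp$-pairs or $+/-$ single signs) is taken into account.

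The main obstacle is purely combinatorial bookkeeping: translating the original outer-diagram data $p_i^\ast$ into the inner-diagram column statistics after the composition $\mathfrak{S}\circ e_1^{\varepsilon_0(P)}\circ\mathfrak{S}$, and then verifying the exponent identification case by case according to the parity of the column height. A secondary subtlety is the maximality clause of Proposition \ref{prop:to highest}: it must be checked that at each step of $\boldsymbol{b}_{2,1}'$ the operator $e_i$ acts entirely on the inner diagram and never on the outer one, so that the outer diagram remains fixed throughout and the maximal action on the inner diagram produces all $+$ in its columns, which is exactly the highest-type condition.
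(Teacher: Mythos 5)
Your proposal follows the paper's own argument: identify $\boldsymbol{b}_{2,1}'$ with the word of Proposition \ref{prop:to highest} applied (with all letters shifted up by one, so it reads $2^{a_1}3^{a_2}\cdots n^{a_{n-1}}\cdots 2^{a_1'}$) to the inner $\pm$-diagram of (\ref{pair+-diagram}), and check that the exponents $a_1$, $a_{i+1}-a_i$, $a_i'-a_{i+1}'$ computed from the inner column statistics $c_i,c_i^\pm$ reproduce $t_2$, the increments of $t_i,\bar{t}_i$, and $\beta$ --- exactly the computation the paper carries out. Your ``secondary subtlety'' is not actually an issue: since $e_i$ for $i\ge 2$ commutes with the decomposition into $X_{n-1}$-components, the outer $\pm$-diagram is automatically unchanged throughout, so no separate check is needed.
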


\begin{proof}
We use Proposition \ref{prop:to highest}.
The quantities $c_i$, $c_i^-$ and $c_i^+$ there
should be used for the corresponding numbers of
the inner $\pm$-diagram of (\ref{pair+-diagram}).
Since we are considering the inner $\pm$-diagram,
we have to understand the word $\boldsymbol{a}$ there
as follows:
\[
\boldsymbol{a}=
2^{a_1}3^{a_2}\cdots (n-1)^{a_{n-2}}n^{a_{n-1}}
(n-1)^{a'_{n-2}}
\cdots 2^{a'_1},
\]
and the formula for $a_i$ and $a_i'$ are the same
in terms of $c_i$, $c_i^-$ and $c_i^+$.
Then,
\begin{align*}
a_1=&\,
\sum_{i=1}^n c_i
+\sum_{i=1}^n c_i^-
-\sum_{i=1}^n c_i^+ -c_1^-\\
=&\,
k+\left\{\sum_{i=3:\mathrm{odd}}^r\left( p_i^++p_{i-2}^\cdot\right)
+p_1^+\right\}
-\left\{\sum_{i=3:\mathrm{odd}}^r\left( p_i^-+p_{i-2}^\cdot\right)
+p_3^\mp +p_1^-\right\}
-p_1^+\\
=&\,
k+\sum_{i=1:\mathrm{odd}}^r
\left( p_i^+-p_i^-\right)-p_3^\mp -p_1^+
\end{align*}
and differences $a_{i+1}-a_i$ are
\begin{align*}
a_{i+1}-a_i=&\,
c_i^+ +c_i^- -c_i -c_{i+1}^-\\
=&\,
\begin{cases}
-p_{i+2}^+ & (\mbox{if }i\mbox{ is odd})\\
-p_{i+3}^\mp & (\mbox{if }i\mbox{ is even and }i\neq r-1)\\
-p_r^\cdot & (i=r-1)
\end{cases}
\end{align*}
and differences $a_i'-a_{i+1}'$ are
\begin{align*}
a_i'-a_{i+1}'=-c_{i+1}^-=
\begin{cases}
-p_{i+2}^+-p_i^\cdot
& (\mbox{if }i\mbox{ is odd})\\
0 & (\mbox{if }i\mbox{ is even}).\\
\end{cases}
\end{align*}
We see that the word $\boldsymbol{a}$
computed here coincides with $\boldsymbol{b}_{2,1}'$.
\end{proof}

\begin{lemma}
$e_1^{p_3^\mp +p_1^-}\circ
e_{\boldsymbol{b}_{2,2}'}\circ
e_{\boldsymbol{b}_{2,1}'}\circ
\mathfrak{S}\circ e_1^{\varepsilon_0(P)}\circ\mathfrak{S}(P)$
is $I_0$-highest.
\end{lemma}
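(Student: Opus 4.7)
The plan is to exploit the result of the previous lemma, which tells us that after $e_{\boldsymbol{b}_{2,1}'}\circ\mathfrak{S}\circ e_1^{\varepsilon_0(P)}\circ\mathfrak{S}(P)$ the inner $\pm$-diagram is of highest type, so the element is $J$-highest and is described by the single outer $\pm$-diagram $Q$ drawn in \eqref{pair+-diagram}. To upgrade this to $I_0$-highest, we still need to kill the $e_1$-action. The remaining word $1^{\bar{\bar t}_1}2^{\bar{\bar t}_2}\cdots\cdot r^{\beta'}(r+1)^{\beta'}\cdots\cdot (r-1)^{\bar{\bar{\bar t}}_{r-1}}\cdots 2^{\bar{\bar{\bar t}}_2}$ in $\boldsymbol{b}_{2,2}'$ together with the terminal $e_1^{p_3^\mp+p_1^-}$ is designed precisely to do that.

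I would proceed in three stages. First, pass to the pair-of-$\pm$-diagrams picture $(Q,\emptyset)$ that describes $\{3,\ldots,n\}$-highest elements, and use Proposition~\ref{prop:e1 action} to carry out $e_1^{\bar{\bar t}_1}$: since the inner component is empty, each $e_1$ successively moves the leftmost unpaired $-$ of $Q$ into a new inner $\pm$-diagram $q$. The exponent $\bar{\bar t}_1=2k-p_3^\mp-p_1^--p_1^+$ is exactly the number of $-$'s in $Q$ lying in columns of height $\ge 2$, so after this stage the only unpaired signs left in the pair are the $p_3^\mp+p_1^-$ unpaired $-$'s in the bottom row of $Q$. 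Second, apply the $e_i$-part of $\boldsymbol{b}_{2,2}'$ (those with $i\ge 2$), which commutes with $\mathfrak S$ and acts only within the $X_{n-1}$-component attached to the current outer shape. Here I would invoke Proposition~\ref{prop:to highest}: the exponents $\bar{\bar t}_i,\beta',\bar{\bar{\bar t}}_i$ are to be identified with the word $\boldsymbol{a}$ that Proposition~\ref{prop:to highest} produces from the $\pm$-diagram gotten by stripping the bottom row of $Q$ (together with the moved $-$'s now in $q$). Once this is checked, this segment sends the pair to one whose outer part is now again $J$-highest in the new $X_n$-component and whose only remaining unpaired symbols live in the bottom row.

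The final $e_1^{p_3^\mp+p_1^-}$ then eliminates those bottom-row $-$'s by Proposition~\ref{prop:e1 action}, and one reads off from \eqref{eq:ht cond} and the signature description that no $e_i$ can act after this step, so the result is $I_0$-highest. The expected main obstacle is the bookkeeping in the second stage: one must match the differences $\bar{\bar t}_{i+1}-\bar{\bar t}_i$, $\beta'-\bar{\bar t}_{r-1}$ and $\bar{\bar{\bar t}}_i-\bar{\bar{\bar t}}_{i+1}$ with the increments $c_i^++c_i^--c_i-c_{i+1}^-$ and $-c_{i+1}^-$ coming from Proposition~\ref{prop:to highest} applied to the modified diagram, which requires carefully accounting for which columns in $Q$ have lost a $-$ through the preceding $e_1^{\bar{\bar t}_1}$ and how this changes the $c_i,c_i^\pm$ counts. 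This is parallel to the verification done in the previous lemma for $\boldsymbol{b}_{2,1}'$, with an extra height shift reflecting the bottom row that is removed rather than made all-$+$.
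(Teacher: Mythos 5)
Your overall target is the right one --- the point of the lemma is that $\boldsymbol{b}_{2,2}'$ followed by $1^{p_3^\mp+p_1^-}$ is exactly the canonical ``raise to highest weight'' word of Proposition~\ref{prop:to highest} for the outer $\pm$-diagram $Q$ of \eqref{pair+-diagram} --- but your first stage rests on a concrete misreading of the pair-of-diagrams formalism, and as described it would annihilate the element. The $J$-highest element $\Phi(Q)$ produced by the previous lemma is \emph{not} represented by the pair $(Q,\emptyset)$: the inner component $q_0$ must represent the \emph{highest weight} element of the $X_{n-1}$-component $B_{X_{n-1}}(\is(Q))$, which is the all-$+$ diagram with a $+$ at the top of every column of $\is(Q)$ (cf.\ the remark ``$\Phi^{-1}(\mu)$ is the $\pm$-diagram of outer shape $\mu$ such that all the columns have $+$'' used earlier in the paper). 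Consequently $\ve_1(\Phi(Q))$ is not the number of $-$'s in $Q$ above height $1$; it is the number of unpaired $+$'s in $q_0$ plus the number of unpaired $-$'s in $Q$, namely $(k-p_3^\mp-p_1^-)+(k-p_1^+)=2k-p_3^\mp-p_1^--p_1^+=\bar{\bar t}_1$. The quantity you identify $\bar{\bar t}_1$ with, the count of $-$'s in columns of height $\ge 2$, equals $k-p_3^\mp-p_1^--p_1^+$ and is smaller by exactly $k$; under your mechanism (``each $e_1$ moves the leftmost unpaired $-$ of $Q$ into $q$'') the operator would terminate after at most $k-p_1^+$ steps and $e_1^{\bar{\bar t}_1}$ would give $0$. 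The first $k-p_3^\mp-p_1^-$ applications of $e_1$ in fact move $+$'s from $q_0$ into $Q$, not $-$'s out of $Q$.

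Beyond this, the three-stage decomposition is unnecessary and creates a second difficulty: your middle stage wants to apply Proposition~\ref{prop:to highest} to a ``modified diagram'' obtained after peeling off $e_1^{\bar{\bar t}_1}$, but that proposition is stated for a single $\pm$-diagram and its full word $1^{a_1}2^{a_2}\cdots n^{a_n}\cdots 1^{a_1'}$, applied to $\Phi(P)$ in one shot with the guarantee that each power acts maximally. The paper's proof simply computes $a_1$, the differences $a_{i+1}-a_i=c_i^++c_i^--c_i-c_{i+1}^-$ and $a_i'-a_{i+1}'=-c_{i+1}^-$ for the outer diagram of \eqref{pair+-diagram}, observes that the resulting word is $\boldsymbol{b}_{2,2}'$ with the single extra terminal factor $1^{a_1'}$, $a_1'=p_3^\mp+p_1^-$, and concludes at once. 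If you restructure your argument that way, the only work is the exponent bookkeeping you correctly anticipate; the staged $e_1$-analysis via Proposition~\ref{prop:e1 action} should be dropped.
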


\begin{proof}
Again, we use Proposition \ref{prop:to highest}.
In this case, the quantities $c_i$, $c_i^-$ and $c_i^+$
there mean those for the outer $\pm$-diagram
of (\ref{pair+-diagram}).
Let us compute the word
\[
\boldsymbol{a}=
1^{a_1}2^{a_2}\cdots (n-1)^{a_{n-1}}n^{a_{n}}
(n-1)^{a'_{n-1}}
\cdots 1^{a'_1}
\]
there in the case of our (\ref{pair+-diagram}).
To begin with, $a_1$ is
\begin{align*}
a_1=&\,
\sum_{i=1}^n c_i
+\sum_{i=1}^n c_i^-
-\sum_{i=1}^n c_i^+ -c_1^-
=k+k-p_1^+-\left(p_3^\mp +p_1^-\right)\\
=&\,
2k-p_3^\mp -p_1^- -p_1^+,
\end{align*}
and differences $a_{i+1}-a_i$ are
\begin{align*}
a_{i+1}-a_i=&\,
c_i^+ +c_i^- -c_i -c_{i+1}^-\\
=&\,
\begin{cases}
0 & (\mbox{if }i\mbox{ is odd or }i\geq r)\\
-p_{i+3}^\mp -p_{i+1}^- -p_{i+1}^+ -p_{i-1}^\cdot
& (\mbox{if }i\mbox{ is even and }i\leq r-1)
\end{cases}
\end{align*}
and differences $a_i'-a_{i+1}'$ are
\begin{align*}
a_i'-a_{i+1}'=-c_{i+1}^-=
\begin{cases}
0 & (\mbox{if }i\mbox{ is odd or }i\geq r)\\
-p_{i+3}^\mp -p_{i+1}^- -p_{i+1}^+ -p_{i-1}^\cdot
& (\mbox{if }i\mbox{ is even and }i\neq 2,i\leq r-1)\\
-p_{5}^\mp -p_{3}^- -p_{3}^+ -p_{1}^\cdot -p_1^+
& (\mbox{if }i=2).\\
\end{cases}
\end{align*}
We see that the word $\boldsymbol{a}$
computed here coincides with $\boldsymbol{b}_{2,2}'$
except for $a_1'=p_3^\mp +p_1^-$ which does not appear
in $\boldsymbol{b}_{2,2}'$.
\end{proof}

Let $\tilde{\mu}$ be the $I_0$-highest weight element whose
outer shape coincides with (\ref{pair+-diagram}).
Then the above lemma shows that
$e_{\boldsymbol{b}_2}
e_0^{\varepsilon_0(P)}(P)=f_1^{p_3^\mp +p_1^-}(\tilde{\mu})$.
Since there are exactly $(p_3^\mp +p_1^-)$
columns of height 1 in $\tilde{\mu}$,
we see that the content of columns of height 1 in the tableau
$f_1^{p_3^\mp +p_1^-}(\tilde{\mu})$ are all 2
and that the other columns are the same as $\tilde{\mu}$.
{}From the shape of (\ref{pair+-diagram})
we see that $f_1^{p_3^\mp +p_1^-}(\tilde{\mu})$
coincides with $P'$ given in Proposition \ref{reduction_odd2}.
To summarize, we have
$e_{\boldsymbol{b}_2}
e_0^{\varepsilon_0(P)}(P)=P'$, hence we complete the
proof of Proposition \ref{reduction_odd2}.

We remark that the 1-signature of
$1^l\otimes P'$ is $+^l-^{(p_3^\mp +p_1^-)}+^C$
for some $C$ and from the highest weight condition
of $1^l\otimes P$ we have $l\geq p_3^\mp +p_1^-$.
Thus we cannot apply $e_1$ on $1^l\otimes P'$.

\subsubsection{Reduction to the special case}

\begin{prop} \label{reduction_odd3}
Suppose $\mu,x$ and $P$ are related as in Theorem \ref{th:main}.
Then, with the notions in Propositions \ref{reduction_odd} and \ref{reduction_odd2},
we have 
\begin{itemize}
\item[(i)] $2k-\mu_1+x_1+x_2=\ve_0(P)+l$,
\item[(ii)] $\boldsymbol{b}_1=\boldsymbol{b}_2$, and
\item[(iii)] In view of 
Proposition \ref{reduction_odd},
set $H=H(\mu\ot x)$,
$H'=H(\ol{\mu}\ot\ol{3}^{x_2+\xb_1}1^{l-x_2-\xb_1})$.
Then we have
$H=H'+(x_1+x_2-l)$.
\end{itemize}
\end{prop}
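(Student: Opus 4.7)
My plan is to prove the three claims independently: (i) and (iii) follow from intrinsic properties of $e_0$ on the tensor product, while (ii) requires a long but routine substitution of the formulas from Theorem \ref{th:main}.

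For (i), since $\varphi_0(\mu)=0$ (as recalled in the proof of Proposition \ref{reduction_odd}) and $\varepsilon_0(x)=x_1+(x_2-\xb_2)_+=x_1+x_2$ using $\xb_2=0$ for odd $r$, the $0$-signature $-^{2k-\mu_1}-^{x_1+x_2}+^{\varphi_0(x)}$ of $\mu\otimes x$ admits no reduction, giving $\varepsilon_0(\mu\otimes x)=2k-\mu_1+x_1+x_2$. In parallel, substituting the formulas for $p_i^\ast$ of Theorem \ref{th:main} into the expression $\varepsilon_0(P)=\sum_j(p_j^\cdot+2p_j^++p_j^\mp)-p_1^+$ from the proof of Proposition \ref{reduction_odd2} and using $l=\sum_i(x_i+\xb_i)$, one finds $\varepsilon_0(P)+l=2k-\mu_1+x_1+x_2$ as well.

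For (iii), I count L- versus R-actions of $e_0$ during the step $e_0^N$ with $N=2k-\mu_1+x_1+x_2$. On the LHS $\mu\otimes x$, the rightmost $-$ in the $0$-signature lies in the $x$-block; because $\varphi_0(\mu)=0$ persists until $e_0$ starts hitting $\mu$, the first $x_1+x_2$ applications act on $x$ (R) and the next $2k-\mu_1$ on $\mu$ (L). The analogous analysis on the RHS $1^l\otimes\Phi(P)$ gives $L'=l$ and $R'=\varepsilon_0(P)$. Since $R$ commutes with $e_0$, one has $\#(\mathrm{LL})-\#(\mathrm{RR})=L-R'=(2k-\mu_1)-\varepsilon_0(P)=l-x_1-x_2$ by (i), and \eqref{eq:e-func} yields $H'=H+(l-x_1-x_2)$, which is (iii). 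The final $e_{\boldsymbol{b}_1}$ step does not change $H$ since it involves no $e_0$.

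Claim (ii) is the main obstacle. I would substitute the formulas for $p_i^\ast$ from Theorem \ref{th:main} into the definitions of the exponents $t_i,\bar t_i,\beta,\bar{\bar t}_i,\beta',\bar{\bar{\bar t}}_i$ of $\boldsymbol{b}_2$ and compare term by term with the corresponding $s_i,\bar s_i,\alpha,\bar{\bar s}_i,\bar{\bar{\bar s}}_i$ of $\boldsymbol{b}_1$. The structure is that the exponents in $\boldsymbol{b}_2$ are defined with explicit $+l$ shifts precisely to cancel the $-l$ that arises (via $l=\sum_i(x_i+\xb_i)$) when $\sum_{i\,\mathrm{odd}}(p_i^+-p_i^-)$ is expressed in $\mu,x$, and a parallel bookkeeping handles $\boldsymbol{b}_{1,2}$ versus $\boldsymbol{b}_{2,2}$; the nominal trailing factor $1^{\bar{\bar{\bar s}}_1}$ of $\boldsymbol{b}_{1,2}$ is empty because $\bar{\bar{\bar s}}_1=0$ by definition, matching the absence of such a factor in $\boldsymbol{b}_{2,2}$. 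Although each individual matching is mechanical, the sheer number of cases and the boundary subtleties (in particular at $i=2$ where $\bar{\bar{\bar t}}_2$ has a special formula) make (ii) the most labor-intensive part of the proof.
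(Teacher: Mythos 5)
Your proposal is correct and takes essentially the paper's route: (i) and (ii) are verified by substituting the formulas of Theorem \ref{th:main} for the $p_i^\ast$ and using $l=\sum_i(x_i+\xb_i)$, and (iii) by tracking which tensor factor each of the $2k-\mu_1+x_1+x_2$ applications of $e_0$ hits on $\mu\ot x$ versus $1^l\ot\Phi(P)$ (first $x_1+x_2$ right then $2k-\mu_1$ left, versus first $\ve_0(P)$ right then $l$ left). Your identity $\#(\mathrm{LL})-\#(\mathrm{RR})=L-R'$ is a small streamlining that avoids the paper's case split on whether $2k-\mu_1\ge l$; the only shortfall is that the term-by-term matching in (ii) is outlined rather than executed, but it proceeds exactly as you describe (e.g.\ $t_2+l=2k-l+x_1-\mu_1+l=s_2$).
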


\begin{proof}(i) We have
\begin{align*}
\ve_0(P)=&\,
\sum_{j}(p_j^\cdot +2p_j^++p_j^\mp )-p_1^+
=2k-\sum_{j}(p_j^\cdot +p_j^\mp +2p_j^-)-p_1^+\\
=&\,
2k-
\left(\sum_{j=1:\mathrm{odd}}^{r-2}(\bar{x}_{j+2}-x_{j+2})+x_{r+1}\right)
-\sum_{j=3:\mathrm{odd}}^{r}x_{j-1}
-\left(\sum_{j=3:\mathrm{odd}}^{r}2x_j+2\bar{x}_1\right)\\
&\,
-\left(\mu_1-\bar{x}_1-x_2\right)
=2k-l+x_1-\mu_1+x_2,
\end{align*}
where we have used
$\sum_{j=1}^{r+1}x_j+\sum_{j=1:\mathrm{odd}}^r\bar{x}_j=l$
in the final line.
Thus $\ve_0(P)+l=2k-\mu_1+x_1+x_2$.\\
(ii) To begin with let us show
$\boldsymbol{b}_{1,1}=\boldsymbol{b}_{2,1}$.
We compute
\begin{align*}
t_2=&\,
k+\sum_{j=1:\mathrm{odd}}^r(\mu_j-\bar{x}_j-x_{j+1})
-\left( \bar{x}_1+\sum_{j=3:\mathrm{odd}}^rx_j\right)
-x_2-(\mu_1-\bar{x}_1-x_{2})\\
=&\,
2k-l-\mu_1+x_1,
\end{align*}
thus $t_2+l=s_2$, which shows the coincidence of
the first letters of $\boldsymbol{b}_{1,1}$
and $\boldsymbol{b}_{2,1}$.
As for the other $s_i$ and $t_i$,
note that
\begin{align*}
t_{i+1}-t_i=
\begin{cases}
-p_{i+1}^+=-(\mu_{i+1}-\bar{x}_{i+1}-x_{i+2})
& (\mbox{if } i \mbox{ is even})\\
-p_{i+2}^\mp =-x_{i+1}
& (\mbox{if } i \mbox{ is odd}).
\end{cases}
\end{align*}
When $i$ is odd, we see $t_{i+1}-t_i=s_{i+1}-s_i$.
When $i$ is even, we have
\begin{align*}
s_{i+1}-s_i=&\,
\left(2k-\sum_{j=1:\mathrm{odd}}^{i+1}\mu_j
+x_1+x_{i+2}+\sum_{j=3:\mathrm{odd}}^{i+1}\bar{x}_j\right)\\
&\,
-\left(
2k-\sum_{j=1:\mathrm{odd}}^{i-1}\mu_j
+x_1+\sum_{j=3:\mathrm{odd}}^{i-1}\bar{x}_j
\right)
\end{align*}
and thus we have $t_{i+1}-t_i=s_{i+1}-s_i$,
i.e., $t_i=s_i$ for all $i$.
We have $\beta -t_r=-p_r^\cdot =-x_{r+1}=\alpha -s_r$,
i.e., $\alpha =\beta +l$.
Similarly, we have
$\bar{t}_{r-1}-\beta =
-p_r^+-p_{r-2}^\cdot =
-(\mu_r-\bar{x}_r-x_{r+1})
-(\bar{x}_{r}-x_r)
=-\mu_r+x_r+x_{r+1}
=\bar{s}_{r-1}-\alpha$.
As for other $\bar{t}_i$ and $\bar{s}_i$,
we have
\begin{align*}
\bar{t}_{i+1}-\bar{t}_i=
\begin{cases}
-p_{i+1}^+-p_{i-1}^\cdot
=-(\mu_{i+1}-\bar{x}_{i+1}-x_{i+2})
-(\bar{x}_{i+1}-x_{i+1})
& (\mbox{if } i \mbox{ is even})\\
0
& (\mbox{if } i \mbox{ is odd}).
\end{cases}
\end{align*}
Thus we have $\bar{t}_i+l=\bar{s}_i$ for all $i$
and obtain
$\boldsymbol{b}_{1,1}=\boldsymbol{b}_{2,1}$.

Similarly we can show
$\boldsymbol{b}_{1,2}=\boldsymbol{b}_{2,2}$.
We compute
\begin{align*}
\bar{\bar{t}}_1=
2k-x_2-\bar{x}_1-\left(\mu_1-\bar{x}_1-x_2\right)=
2k-\mu_1,
\end{align*}
thus $\bar{\bar{t}}_1+l=\bar{\bar{s}}_1$,
i.e., the coincidence of the first letters of
$\boldsymbol{b}_{1,1}$ and $\boldsymbol{b}_{2,1}$.
Next, we have $\bar{\bar{t}}_2-(\bar{\bar{t}}_1+l)=-l$.
On the other hand, we have
$\bar{\bar{s}}_1=2k+l-\mu_1$ and
$\bar{\bar{s}}_2=2k-\mu_1$, thus
$\bar{\bar{s}}_2-\bar{\bar{s}}_1=-l$, i.e.,
$\bar{\bar{t}}_2=\bar{\bar{s}}_2$.
Similarly, we can recursively show
$\bar{\bar{t}}_i=\bar{\bar{s}}_i$ for all $i$,
$\beta'=k$,
$\bar{\bar{\bar{t}}}_i=\bar{\bar{\bar{s}}}_i$
for all $i$.
So we have
$\boldsymbol{b}_{1,2}=\boldsymbol{b}_{2,2}$,
and therefore we get the final result
$\boldsymbol{b}_{1}=\boldsymbol{b}_{2}$.\\
(iii) The 0-signature of $\mu\otimes x$ is
$-^{2k-\mu_1}\cdot -^{x_1+x_2}+^{C_1}$ for some $C_1$
and that of $1^l\otimes P$ is
$-^l\cdot -^{2k-l-\mu_1+x_1+x_2}+^{C_2}$ for some $C_2$.
Here we divide into two cases.
Let us first assume $2k-\mu_1\geq l$.
Then the actions of $e_0$ 
on two tensor products look as follows
(proceed from left to right):
\begin{align}
\mu\otimes x:\,&
\underbrace{R\cdots\cdots R}_{x_1+x_2}
\underbrace{L\cdots\cdots\cdots\cdots\cdots L}_{2k-\mu_1}
\nonumber\\
1^l\otimes P :\,&
\underbrace{R\cdots\cdots\cdots\cdots\cdots R}_{2k-l-\mu_1+x_1+x_2}
\underbrace{L\cdots\cdots L}_{l}
\nonumber
\end{align}
Thus we have $(x_1+x_2)$ (RR) pairs and
$l$ (LL) pairs.
Therefore we have
$H'=H-(x_1+x_2)+l$ which gives the desired relation.
Next assume $2k-\mu_1\leq l$.
Then we have $(2k-l-\mu_1+x_1+x_2)$ (RR) pairs
and $(2k-\mu_1)$ (LL) pairs and again we obtain
$H'=H-(x_1+x_2)+l$.
\end{proof}

\subsection{Even $r$ case}
Since the proofs are similar to those
for the odd $r$ case, we only describe the results.

\subsubsection{Calculation in $B^{r,k}\otimes B^{1,l}$}
Let $\mu\ot x\in B^{r,k}\ot B^{1,l}$ be $I_0$-highest and $\mu=\sum_i\mu_i\La_i$.
$\mu_i=0$ unless $1\le i\le r$ and $i$ is even. We also know that the coordinates
other than $x_1,x_2,\dots,x_{r+1},\bar{x}_r,\ldots,\bar{x}_4,\bar{x}_2$ are $0$ by
Proposition \ref{prop:ht cond 1}.
Let us set $c=(x_1-\mu_0)_+$ throughout this subsection.

Let us define a word
$\boldsymbol{b}_3=\boldsymbol{b}_{3,1}\boldsymbol{b}_{3,2}$ by
\begin{align}
\boldsymbol{b}_{3,1}=
&\,
2^{s_2}3^{s_3}\cdots r^{s_r}
(r+1)^{\alpha}(r+2)^{\alpha}\cdots n^{\alpha}(n-1)^{\alpha}
\cdots (r+1)^{\alpha}r^{\alpha}
\nonumber\\
&
(r-1)^{\bar{s}_{r-1}}(r-2)^{\bar{s}_{r-2}}\cdots
2^{\bar{s}_{2}},
\nonumber\\
\boldsymbol{b}_{3,2}=
&\,
1^{\bar{\bar{s}}_1}
2^{\bar{\bar{s}}_2}3^{\bar{\bar{s}}_3}\cdots
(r-1)^{\bar{\bar{s}}_{r-1}}
r^k(r+1)^k\cdots n^k(n-1)^k\cdots (r+1)^kr^k
\nonumber\\
&
(r-1)^{\bar{\bar{\bar{s}}}_{r-1}}
(r-2)^{\bar{\bar{\bar{s}}}_{r-2}}\cdots
2^{\bar{\bar{\bar{s}}}_{2}}
1^{\bar{\bar{\bar{s}}}_{1}}
\nonumber
\end{align}
where the exponents are defined as follows.
For $i=1,2,\cdots,r/2$,
\begin{align}
s_{2i}=&\,
2k-\sum_{j=0:\mathrm{even}}^{2i}\mu_j
+c+x_{2i+1}+\sum_{j=2:\mathrm{even}}^{2i}\bar{x}_{j},
\qquad
s_{2i+1}=s_{2i}-x_{2i+1}.
\nonumber
\end{align}
Define $\alpha=k+c+\sum_{j=2:\mathrm{even}}^{r}\bar{x}_{j}$.
Then $s_r=\alpha+x_{r+1}$.
For $i=r/2-1,\cdots,2,1$,
\begin{align}
\bar{s}_{2i+1}=\bar{s}_{2i}=
\alpha-\sum_{j=2i+2:\mathrm{even}}^r\mu_j
+\sum_{j=2i+2}^{r+1}x_j.
\nonumber
\end{align}
Set $\bar{\bar{s}}_1=2k+l-x_1+c$
and define other $\bar{\bar{s}}_i$ by
$\bar{\bar{s}}_{2i}=\bar{\bar{s}}_{2i+1}=
2k-\sum_{j=0:\mathrm{even}}^{2i}\mu_j$
for $i=1,2,\cdots,r/2-1$.
Note that $\bar{\bar{s}}_{r-1}=k+\mu_r$.
Set ${\bar{\bar{\bar{s}}}_{1}}=0$
and define other
${\bar{\bar{\bar{s}}}_{i}}$ by
${\bar{\bar{\bar{s}}}_{2i-1}}=
{\bar{\bar{\bar{s}}}_{2i-2}}=
k-\sum_{j=2i:\mathrm{even}}^r\mu_j$
for $i=r/2-1,\cdots,3,2$.

Then we have:
\begin{prop}\label{reduction_even}
We have
$$e_{\boldsymbol{b}_3}e_0^{2k-\mu_0+c}(\mu\otimes x)
=\bar{\mu}\otimes 1^{l}$$
where $\bar{\mu}=\sum_{i=2:\mathrm{even}}^r\mu_i\La_i+\mu_0\La_2$.
\end{prop}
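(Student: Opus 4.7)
The plan is to parallel the structure of the proof of Proposition \ref{reduction_odd} with the modifications dictated by the parity of $r$. As in that proof, I would split the argument into three lemmas: first compute the action of $e_0^{2k-\mu_0}$ on $\mu$ alone in $B^{r,k}$; second extend to $e_0^{2k-\mu_0+c}(\mu\otimes x)$ by using the signature rule to see how the extra $c=(x_1-\mu_0)_+$ applications distribute between the two tensor factors; and third apply $e_{\boldsymbol{b}_{3,1}}$ followed by $e_{\boldsymbol{b}_{3,2}}$ to bring the result to $\bar\mu\otimes 1^l$. Throughout I would assume $\mu_0$ is even, the odd-parity case being entirely analogous.

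For the first step, the $\pm$-diagram $\Phi^{-1}(\mu)$ has outer shape $\mu$ and empty inner shape, with every column of positive even height filled by $+$'s (and $\mu_0$ empty columns of height $0$). Applying $\mathfrak S$ and then $\Phi$ produces a Japanese reading word of the form $\bar 1^{\mu_0}(2\bar 1)^{\mu_2}(234\bar 1)^{\mu_4}\cdots(23\cdots r\bar 1)^{\mu_r}$, from which one reads off $\varepsilon_0(\mu)=2k-\mu_0$ and $\varphi_0(\mu)=0$. Applying $e_1^{2k-\mu_0}$ and converting back with $f_{\mathrm{Rev}(\boldsymbol{a})}$ (using an analog of $\boldsymbol{b}_{1,1}'$ with all $x_j$'s set to $0$) yields the closed form for $e_0^{2k-\mu_0}\mu$, in precise analogy with the first lemma in the odd case.

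For the second step, the $0$-signature of $x$ is $-^{x_1}+^{x_2+\bar x_2}$-reduced according to the positions of $2$'s and $\bar 2$'s; combined with $\varphi_0(\mu)=0$, one obtains the global $0$-signature $-^{2k-\mu_0}\cdot -^{x_1}+^{\bar x_2-x_2}$, so that the $(2k-\mu_0+c)$-fold action of $e_0$ splits as $e_0^{2k-\mu_0}$ on the first factor and $e_0^c$ on the second factor (the case $x_1<\mu_0$ giving $c=0$ and so contributing nothing extra to the second factor, the case $x_1\ge\mu_0$ moving $c$ units into the second factor). For the third step, $e_{\boldsymbol{b}_{3,1}}$ should, via Proposition \ref{prop:to highest} applied to the partially transformed $\pm$-diagram, act entirely on the first factor up to the controlled $e_2,e_3,\ldots$ moves that interact with $x$; the highest-weight conditions (i)–(iv) of Proposition \ref{prop:ht cond 1}, in particular $x_{i+1}+\bar x_i\le\mu_i$ and $x_i\le\bar x_i$ for even $i$, guarantee that each stage has a simple signature of the form $-^{s_i}$ so no $e_j$ is ``wasted.'' A final application of $e_{\boldsymbol{b}_{3,2}}$, whose exponents are designed to absorb the remaining $\bar 1$'s and $\bar 2$'s produced in step one, turns the tableau into $\bar\mu$ and leaves $1^l$ in the second factor, exactly as in the odd case.

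The main obstacle will be the bookkeeping: verifying that at every intermediate stage the signature is indeed of the simple form $-^{\mathrm{exp}}$ with no cancellation between the two tensor factors, and that the declared exponents $s_i,\bar s_i,\bar{\bar s}_i,\bar{\bar{\bar s}}_i$ match the ones that Proposition \ref{prop:to highest} would produce. The presence of the quantity $c=(x_1-\mu_0)_+$ forces a case split (whether $x_1\ge\mu_0$ or $x_1<\mu_0$) in the signature computation of step two, very much in the spirit of Cases 1 and 2 in the proof of Proposition \ref{prop_special}; once those two cases are handled, the rest of the reduction is a mechanical tracking of symbols through the $\pm$-diagrams.
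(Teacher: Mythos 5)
Your overall strategy---mirroring the three-lemma structure of the proof of Proposition \ref{reduction_odd}---is the right one, and is indeed what the paper intends (it omits the proof of Proposition \ref{reduction_even}, saying only that it is similar to the odd case). However, the one place where the even case genuinely differs from the odd case is precisely where your proposal goes wrong: you assert $\varphi_0(\mu)=0$, whereas in fact $\varphi_0(\mu)=\mu_0$, and this nonzero value is the entire reason the exponent $c=(x_1-\mu_0)_+$ (rather than $x_1$) appears in the statement. The paper flags exactly this in the remark following the proposition.

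The error originates in your description of $\Phi\circ\mathfrak{S}\circ\Phi^{-1}(\mu)$. The inner shape of $\Phi^{-1}(\mu)$ has $c_0=\mu_0$ columns of height $0$, and since $0\equiv r\pmod 2$, the rule defining $\mathfrak{S}$ on columns of such heights is to change the number of $\mp$ pairs from $0$ to $c_0-0=\mu_0$; it does not place single $-$'s. Thus $\mathfrak{S}(\Phi^{-1}(\mu))$ acquires $\mu_0$ new columns of height $2$, each carrying a $+$ at height $1$ and a $-$ at height $2$, and under $\Phi$ these $\mu_0$ plus signs alter the tallest columns of the tableau (replacing $\bar{1}$ by $\bar{2}$ and $23\cdots r$ by $13\cdots r$ in $\mu_0/2$ of them, in the generic case $\mu_0/2\le\mu_r$). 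The resulting reading word has the form $(2\bar{1})^{\mu_0+\mu_2}\cdots(23\cdots r\,\bar{1})^{\mu_r-\mu_0/2}(13\cdots r\,\bar{2})^{\mu_0/2}$, whose $1$-signature is $-^{2k-\mu_0}+^{\mu_0}$, not $-^{2k-\mu_0}$ as your word $\bar{1}^{\mu_0}(2\bar{1})^{\mu_2}\cdots$ would give. Consequently the $0$-signature of $\mu\otimes x$ is $-^{2k-\mu_0}+^{\mu_0}-^{x_1}+^{\bar{x}_2-x_2}$, and it is the cancellation of $+^{\mu_0}$ against $-^{x_1}$ that leaves exactly $(x_1-\mu_0)_+=c$ minus signs attributable to the second factor. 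With your signature $-^{2k-\mu_0}\cdot\,-^{x_1}+^{\cdots}$ the operator $e_0$ would act $x_1$ times on the second factor, so your step two cannot yield the splitting you assert; and your closed form for $e_0^{2k-\mu_0}\mu$ in step one is also off, since the correct answer has the same two-regime structure (governed by where $\mu_0/2$ sits among the partial sums $\sum_j 2\mu_j$) as the corresponding lemma in the odd case. The remainder of your outline (using Propositions \ref{prop:to highest} and \ref{prop:ht cond 1} to control the signatures under $e_{\boldsymbol{b}_{3,1}}$ and $e_{\boldsymbol{b}_{3,2}}$) is sound once this is repaired.
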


We remark that $\varepsilon_0(\mu)=2k-\mu_0$ and
$\varphi_0(\mu)=\mu_0$.
This nonzero $\varphi_0(\mu)$ is the origin of $c$
in the above formula.

\subsubsection{Calculation in $B^{1,l}\otimes B^{r,k}$}
In this subsection, let $P$ and $P'$ be the $\pm$-diagrams.
As before, corresponding to $P$ and $P'$,
we use the parametrization $p_i^\ast$ and
$p_i'{}^\ast$ ($\ast =\cdot,+,-,\mp$) respectively.
Define a word $\boldsymbol{b}_4=\boldsymbol{b}_{4,1}
\boldsymbol{b}_{4,2}$ by
\begin{align}
\boldsymbol{b}_{4,1}=
&\,
2^{t_2+l}3^{t_3+l}\cdots r^{t_r+l}
(r+1)^{\beta +l}(r+2)^{\beta +l}\cdots n^{\beta +l}(n-1)^{\beta +l}
\cdots (r+1)^{\beta +l}r^{\beta +l}
\nonumber\\
&
(r-1)^{\bar{t}_{r-1}+l}(r-2)^{\bar{t}_{r-2}+l}\cdots
2^{\bar{t}_{2}+l},
\nonumber\\
\boldsymbol{b}_{4,2}=
&\,
1^{\bar{\bar{t}}_1+l}
2^{\bar{\bar{t}}_2}3^{\bar{\bar{t}}_3}\cdots
(r-1)^{\bar{\bar{t}}_{r-1}}
r^{\beta'}(r+1)^{\beta'}\cdots
n^{\beta'}(n-1)^{\beta'}\cdots (r+1)^{\beta'}r^{\beta'}
\nonumber\\
&
(r-1)^{\bar{\bar{\bar{t}}}_{r-1}}
(r-2)^{\bar{\bar{\bar{t}}}_{r-2}}\cdots
2^{\bar{\bar{\bar{t}}}_{2}}.
\nonumber
\end{align}
Here the exponents for $\boldsymbol{b}_{4,1}$
are
\begin{align*}
&t_2=k+\sum_{i=2:\mathrm{even}}^r
(p_i^+-p_i^-)-p_2^+-p_2^\mp ,\\
&t_{i+1}-t_i=
\begin{cases}
-p_{i+1}^+ & (\mbox{if } i \mbox{ is odd})\\
-p_{i+2}^\mp & (\mbox{if } i \mbox{ is even}),
\end{cases}\\
&\beta -t_r=-p_r^\cdot ,\qquad
\bar{t}_{r-1}-\beta =-p_r^+-p_{r-2}^\cdot ,\\
&\bar{t}_{i}-\bar{t}_{i+1}=
\begin{cases}
-p_{i-1}^+-p_{i+1}^\cdot & (\mbox{if } i \mbox{ is odd})\\
0 & (\mbox{if } i \mbox{ is even}).
\end{cases}
\end{align*}
The exponents for $\boldsymbol{b}_{4,2}$
are
\begin{align*}
&\bar{\bar{t}}_1=2k-p^\mp_2\\
&\bar{\bar{t}}_{i+1}-\bar{\bar{t}}_i=
\begin{cases}
-p^\mp_{i+3}-p^-_{i+1}-p^+_{i+1}-p^\cdot_{i-1}
& (\mbox{if } i \mbox{ is odd})\\
0 & (\mbox{if } i \mbox{ is even}),
\end{cases}\\
& \beta'-\bar{\bar{t}}_{r-1}=
\bar{\bar{\bar{t}}}_{r-1}-\beta'=
-p^\cdot_{r}-p^-_{r}-p^+_{r}-p^\cdot_{r-2},\\
& \bar{\bar{\bar{t}}}_{i}-\bar{\bar{\bar{t}}}_{i+1}=
\begin{cases}
-p^\mp_{i+3}-p^-_{i+1}-p^+_{i+1}-p^\cdot_{i-1}
& (\mbox{if } i \mbox{ is odd and }i\neq 1)\\
-p^\mp_{4}-p^-_{2}-p^+_{2}-p_2^\mp -p^\cdot_{0}
& (\mbox{if } i=1)\\
0 & (\mbox{if } i \mbox{ is even}).
\end{cases}
\end{align*}

Then the result is:
\begin{prop}\label{reduction_even2}
We have 
\begin{align}
e_{\boldsymbol{b}_4}e_0^{\varepsilon_0(P)+l}(1^l\otimes P)
=1^l\otimes P'
\nonumber
\end{align}
where
\begin{align*}
{\varepsilon_0(P)}=&\,
p_r^\cdot +2p_r^+ +p_{r-2}^\cdot+
\sum_{j=2:\mathrm{even}}^{r-2}
(p^\mp_{j+2} +2p^+_j+p^\cdot_{j-2}),
\end{align*}
and $P'$ is related with $P$ as
\begin{align*}
&p_2'{}^+=p_4^\mp +p_2^- +p_2^+ +p_2^\mp +p_0^\cdot,&
&p_r'{}^+=p_r^\cdot +p_r^-+p_r^+ +p_{r-2}^\cdot ,\\
&p_i'{}^+=p_{i+2}^\mp +p_i^-+p_i^++p_{i-2}^{\cdot},
\end{align*}
where $i$ is an even integer such that $2<i<r$
and all other $p_i'{}^\ast =0$.
\end{prop}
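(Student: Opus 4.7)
The plan is to mirror the argument used for Proposition \ref{reduction_odd2}, adapting every step to the even-$r$ parity. Since $\ve_0(1^l)=l$ and $\vp_0(1^l)=0$, the $l$ minus signs coming from $1^l$ sit strictly to the left of those coming from $P$ in the $0$-signature. Hence the first $\ve_0(P)$ applications of $e_0^{\ve_0(P)+l}$ act on $P$ and the remaining $l$ on $1^l$, yielding $\bar{2}^l\ot e_0^{\ve_0(P)}(P)$. The leading letters of $\boldsymbol{b}_4$, which carry the extra $+l$ in their exponents, then consume $\bar{2}^l$ and turn it back into $1^l$ (just as the first $l$ copies of $e_2$ in $\boldsymbol{b}_{2,1}$ do in the odd case). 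This reduces the claim to proving $e_{\boldsymbol{b}_4'}e_0^{\ve_0(P)}(P)=P'$, where $\boldsymbol{b}_4'$ is obtained from $\boldsymbol{b}_4$ by removing the $+l$'s.

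Next I would compute $\ve_0(P)$ by running the pairing algorithm of Proposition \ref{prop:e1 action} on the pair of $\pm$-diagrams representing $\mathfrak{S}(P)$. In the even-$r$ case, the columns of $P$ of height $2$ bearing a $\mp$ pair are mapped by $\mathfrak{S}$ to empty columns of height $0$, which cannot participate in the pairing; these contribute the $-p_2^\mp$ correction that is the even-$r$ analog of the $-p_1^+$ correction in the odd case, and the displayed formula for $\ve_0(P)$ follows after collecting the standard $p_j^\cdot+2p_j^++p_j^\mp$ contributions and rewriting them as the prescribed telescoping sum. Applying $e_1^{\ve_0(P)}$ to $\mathfrak{S}(P)$ can then be read directly from the pairing diagram, and a further application of $\mathfrak{S}$ (which we may perform on the outer $\pm$-diagram alone, since $\mathfrak{S}$ commutes with $e_i$ for $i\geq 2$) produces a pair of $\pm$-diagrams whose outer shape coincides with that of $P'$, except at the short-column edge where $p_2^\mp$ columns have migrated to height $2$ carrying a $+$.

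The conclusion is then extracted by two applications of Proposition \ref{prop:to highest}. Applied to the inner $\pm$-diagram, it shows that $e_{\boldsymbol{b}_{4,1}'}$ brings the inner shape to its highest configuration, provided the successive differences $t_{i+1}-t_i$ and $\bar{t}_i-\bar{t}_{i+1}$ match the recursion $a_{i+1}-a_i=c_i^++c_i^--c_i-c_{i+1}^-$ from that proposition; the parity shift simply relabels odd indices as even and vice versa, producing exactly the exponents in $\boldsymbol{b}_{4,1}'$. A second application shows that $e_{\boldsymbol{b}_{4,2}'}\circ e_1^{p_2^\mp}$ reaches the $I_0$-highest element $\tilde\mu$ with the outer shape produced above; since $\tilde\mu$ has exactly $p_2^\mp$ columns of height $2$ of highest type, we obtain $e_{\boldsymbol{b}_4'}e_0^{\ve_0(P)}(P)=f_1^{p_2^\mp}(\tilde\mu)=P'$, the $f_1^{p_2^\mp}$ converting those columns into the desired $+$-pattern at height $2$.

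The main obstacle is the careful bookkeeping of exponents across the parity shift. In particular, one must verify that the distinguished value $\bar{\bar{\bar{t}}}_1-\bar{\bar{\bar{t}}}_2=-(p_4^\mp+p_2^-+p_2^++p_2^\mp+p_0^\cdot)$ absorbs the full short-column contribution $p_2^\mp+p_0^\cdot$ from the bottom edge of the outer $\pm$-diagram, and that $\beta'=k$ together with $\beta+l$ in $\boldsymbol{b}_{4,1}$ correctly match the height-$r$ behaviour. One should also check that the final $f_1^{p_2^\mp}$ is legitimate, namely that the inequality $l\geq p_2^\mp$ (the even-$r$ analog of $l\geq p_3^\mp+p_1^-$) holds, which follows from the highest-weight condition of $1^l\ot P$ via Proposition \ref{prop:ht cond 2}.
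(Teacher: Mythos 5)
The paper gives no separate proof of this proposition (it only remarks that the argument parallels the odd-$r$ case), and your overall plan --- splitting off $\bar{2}^l\ot e_0^{\ve_0(P)}(P)$, computing $\ve_0(P)$ by the pairing rule on the pair of $\pm$-diagrams for $\mathfrak{S}(P)$, then applying $\mathfrak{S}\circ e_1^{\ve_0(P)}\circ\mathfrak{S}$ and two invocations of Proposition \ref{prop:to highest} to match the exponents of $\boldsymbol{b}_4$ --- is exactly the right adaptation. Your identification of the correction term $-p_2^\mp$ in $\ve_0(P)$ is also correct; one can check it is consistent with Proposition \ref{reduction_even3}(i) and the formulas of Theorem \ref{th:main}.

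However, your final step is wrong. You conclude with $e_{\boldsymbol{b}_4'}e_0^{\ve_0(P)}(P)=f_1^{p_2^\mp}(\tilde\mu)=P'$, transplanting the leftover $f_1^{p_3^\mp+p_1^-}$ from the odd case. This cannot hold: the $P'$ of the proposition has all $p_i'{}^\ast=0$ except the $p_i'{}^+$, and $\sum_i p_i'{}^+=\sum_i(p_i^\cdot+p_i^++p_i^-+p_i^\mp)=k$, so $P'$ is the all-$+$ diagram and is itself $I_0$-highest (the paper points this out immediately after the statement). Since $f_1$ preserves the classical component, $f_1^{p_2^\mp}(\tilde\mu)$ fails to be $I_0$-highest whenever $p_2^\mp>0$, so it cannot equal $P'$. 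The even case in fact terminates with no residual $f_1$ at all: the leftover in the odd case is $e_1^{a_1'}$ with $a_1'=c_1^-$, the number of $-$'s at height $1$ in the outer $\pm$-diagram of the final pair, and for even $r$ every column of that outer shape has even height, so $c_1^-=0$; hence $\boldsymbol{b}_{4,2}$ already contains the complete word of Proposition \ref{prop:to highest} and $e_{\boldsymbol{b}_4'}e_0^{\ve_0(P)}(P)=\tilde\mu=P'$ directly. For the same reason the inequality $l\ge p_2^\mp$ you propose to verify at the end is not needed --- there is no even-$r$ analogue of the closing remark about the $1$-signature of $1^l\ot P'$.
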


Obviously, $P'$ above is an $I_0$-highest weight element.

\subsubsection{Reduction to the special case}

\begin{prop} \label{reduction_even3}
Suppose $\mu,x$ and $P$ are related as in Theorem \ref{th:main}.
Then, with the notions in Propositions \ref{reduction_even} and \ref{reduction_even2},
we have 
\begin{itemize}
\item[(i)] $2k-\mu_0+c=\ve_0(P)+l$,
\item[(ii)] $\boldsymbol{b}_3=\boldsymbol{b}_4$, and
\item[(iii)] $H(\mu\ot x)=(x_1-\mu_0)_+-l$.
\end{itemize}
\end{prop}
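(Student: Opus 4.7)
The proof of Proposition \ref{reduction_even3} mirrors that of Proposition \ref{reduction_odd3}: each of the three parts is a direct verification built on the explicit formulas of Theorem \ref{th:main}, together with the identities $k = \mu_0 + \sum_{i>0}\mu_i$, $\sum_j(x_j+\xb_j) = l$ (with $\xb_j=0$ for odd $j$ by Proposition \ref{prop:ht cond 1}), and $\min(\mu_0,x_1) = \mu_0 + (x_1-\mu_0)_-$.

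For (i), I would substitute the $p_j^\ast$ from Theorem \ref{th:main} into
\[
\ve_0(P) = p_r^\cdot + 2p_r^+ + p_{r-2}^\cdot + \sum_{j=2,\,\mathrm{even}}^{r-2}\bigl(p_{j+2}^\mp + 2p_j^+ + p_{j-2}^\cdot\bigr)
\]
supplied by Proposition \ref{reduction_even2}. The $\mu_j$ contributions collapse to $2\sum_{j=2,\,\mathrm{even}}^{r}\mu_j = 2(k-\mu_0)$, the $\xb_j$ terms telescope to $-\sum_{j=2,\,\mathrm{even}}^{r}\xb_j$, the $x_j$ terms telescope to $-\sum_{j=2}^{r+1}x_j$, and $p_0^\cdot$ contributes the extra $-(x_1-\mu_0)_-$. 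Then $\sum_j(x_j+\xb_j) = l$ consolidates these to $\ve_0(P) = 2k-2\mu_0+x_1-l-(x_1-\mu_0)_-$; since $x_1 - (x_1-\mu_0)_- = \mu_0 + c$, this yields $\ve_0(P)+l = 2k-\mu_0+c$.

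For (ii), I would verify $\boldsymbol{b}_3 = \boldsymbol{b}_4$ by a block-by-block comparison of exponents. The leading equality $s_2 = t_2 + l$ follows once one expands $t_2 = k + \sum_{i\,\mathrm{even}}(p_i^+-p_i^-)-p_2^+-p_2^\mp$ via the formulas and applies the same $\min$-identity used in (i). The successive differences $s_{i+1}-s_i$, $\bar{s}_i - \bar{s}_{i+1}$, $\bar{\bar{s}}_{i+1}-\bar{\bar{s}}_i$, and $\bar{\bar{\bar{s}}}_i - \bar{\bar{\bar{s}}}_{i+1}$ each reduce to the corresponding $t$-differences by direct substitution; the center satisfies $\alpha = \beta + l$ and $\beta' = k$. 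The only non-routine block is at $i=1$ in $\bar{\bar{\bar{s}}}$, where $\min(\mu_0,x_1)$ again needs unpacking.

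For (iii), combining (i), (ii), and Propositions \ref{reduction_even} and \ref{reduction_even2}, the single operator $e_{\boldsymbol{b}_3}e_0^{2k-\mu_0+c}$ sends $\mu\ot x \mapsto \bar{\mu}\ot 1^l$ and $1^l\ot P \mapsto 1^l\ot P'$; a short calculation using the formulas of Theorem \ref{th:main} confirms $p_2'{}^+ = \mu_0+\mu_2$ and $p_i'{}^+ = \mu_i$ for even $i>2$, so $\Phi(P') = \bar{\mu}$, and Proposition \ref{prop_special} applied to $\bar{\mu}\ot 1^l$ (which satisfies $\sum c_i = k$) yields $H(\bar{\mu}\ot 1^l) = 0$. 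By \eqref{eq:e-func} we then have $H(\mu\ot x) = \#\mathrm{RR} - \#\mathrm{LL}$, summed over the $2k-\mu_0+c$ applications of $e_0$. The signature rule gives the reduced $0$-signature of $\mu\ot x$ as $2k-\mu_0$ L-minuses (from $\mu$) followed by $c$ R-minuses (from $x$), and that of $1^l\ot P$ as $l$ L-minuses (from $1^l$) followed by $\ve_0(P)$ R-minuses (from $P$); hence on $\mu\ot x$ the first $c$ steps act on $x$ and the next $2k-\mu_0$ on $\mu$, while on $1^l\ot P$ the first $\ve_0(P)$ act on $P$ and the next $l$ on $1^l$. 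The main obstacle is the ensuing step-by-step count, which splits on whether $c \le \ve_0(P)$ (equivalently $l \le 2k-\mu_0$) or $c > \ve_0(P)$; in both subcases one finds $\#\mathrm{RR} - \#\mathrm{LL} = c - l$, giving $H(\mu\ot x) = (x_1-\mu_0)_+ - l$.
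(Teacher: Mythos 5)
Your proposal is correct and follows essentially the same route the paper intends: the paper omits this proof, stating it is "similar to" the odd-$r$ case (Proposition \ref{reduction_odd3}), and your argument is precisely that adaptation — substituting the $p_i^\ast$ formulas of Theorem \ref{th:main} into $\ve_0(P)$ and using $\sum_j(x_j+\xb_j)=l$ together with $x_1-(x_1-\mu_0)_-=\mu_0+c$ for (i), the block-by-block exponent comparison for (ii), and the $R/L$ bookkeeping of $e_0$ applications (with $H(\bar\mu\ot 1^l)=0$ from Proposition \ref{prop_special}, valid since $\sum_i c_i=\mu_0+\mu_2+\cdots+\mu_r=k$) for (iii). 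Your computations check out, including the verification $p_2'^+=\mu_0+\mu_2$, $p_i'^+=\mu_i$, and the count $\#\mathrm{RR}-\#\mathrm{LL}=\min(c,\ve_0(P))+\max(c,\ve_0(P))-(\ve_0(P)+l)=c-l$, which in fact needs no case split.
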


\end{document}